\newtheorem{theorem}{Theorem}[section]
\newtheorem{proposition}[theorem]{Proposition}
\newtheorem{lemma}[theorem]{Lemma}
\newtheorem{notation}[theorem]{Notation}
\newtheorem{remark}[theorem]{Remark}
\newtheorem{conjecture}[theorem]{Conjecture}
\theoremstyle{definition}
\newtheorem{definition}[theorem]{Definition}
\title{Boundedness of fibers for pluricanonical maps of varieties of general type}
\author{Justin Lacini}
\date{}
\begin{document}

\begin{abstract}
We prove that the $r$-th pluricanonical maps of threefolds of general type have birationally bounded fibers if $r\geqslant 2$. Similarly, we prove 
that the $r$-th pluricanonical maps of fourfolds of general type have birationally bounded fibers if $r\geqslant 4$. We extend these results to higher dimensions
in terms of constants arising naturally from the birational geometry of varieties of general type.
\end{abstract}

\maketitle

\section{Introduction}

A fundamental problem in algebraic geometry is to classify smooth projective varieties. Since every smooth projective variety is intrinsically equipped with its canonical
line bundle, one is naturally led to study the structure of the pluricanonical linear series $|rK_X|$. 
If $h^0(X, rK_X)>0$, $|rK_X|$ defines a rational map $\varphi_{rK_X}$ to projective space, which is called the $r$-th canonical map of $X$. In this paper we prove:

\begin{theorem}\label{thesis1}
If the $r$-th plurigenus is not zero, the $r$-th canonical maps of smooth threefolds and fourfolds of general type have birationally bounded fibers for $r\geqslant 2$ and $r\geqslant 4$ respectively.
\end{theorem}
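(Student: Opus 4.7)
I would pass to a smooth birational model $\pi:\tilde X\to X$ on which the movable part $|M|$ of $|r\pi^*K_X|$ is base-point free, inducing (after Stein factorization and resolution of the base) a morphism $f:\tilde X\to Y$ with connected fibers onto a smooth $Y$ with $M=f^*H$ for a big divisor $H$ on $Y$. Decompose $r\pi^*K_X\sim M+E$ with $E\geq 0$ the fixed part. If $f$ is generically finite there is nothing to prove, so assume the smooth general fiber $F$ has positive dimension $d$. Adjunction together with $M|_F\sim 0$ gives $rK_F\sim E|_F$, so in particular $K_F$ is $\mathbb{Q}$-effective.

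I would then split the argument into two parts: (i) show that $F$ is of general type, and (ii) bound $\mathrm{vol}(K_F)$ uniformly from above. Once both are in place, the boundedness of varieties of general type with bounded canonical volume (Hacon--McKernan--Xu) implies that the fibers range over a birationally bounded family. Step (i) is the delicate one: the thresholds $r\geq 2$ in dimension $3$ and $r\geq 4$ in dimension $4$ are expected to enter here through a case analysis on $(\dim F,\dim Y)$, ruling out fibers of intermediate Kodaira dimension using the identity $rK_F\sim E|_F$ and, where needed, the canonical bundle formula for algebraic fiber spaces.

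For step (ii) the key intersection inequality is
\[
r^n\,\mathrm{vol}(K_X)=(M+E)^n\;\geq\;\binom{n}{k}\,M^k\cdot E^{n-k}\;=\;\binom{n}{k}\,\mathrm{vol}(H)\,(E|_F)^d,
\]
where $k=\dim Y$, using the numerical equivalence $M^k\equiv \mathrm{vol}(H)\,[F]$. I would combine this with the identity $h^0(\tilde X,M)=h^0(Y,H)$, the Castelnuovo-type bound $h^0(Y,H)\leq\mathrm{vol}(H)+k$ for the non-degenerate image of $Y$ under $|H|$, and an effective lower bound for $h^0(X,rK_X)$ in terms of $r^n\mathrm{vol}(K_X)$ valid at the stated thresholds. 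These estimates combine to bound $\mathrm{vol}(K_F)=(E|_F)^d/r^d$ by a quantity depending only on $r$ and $n$; the regime of small $\mathrm{vol}(K_X)$ is absorbed directly by the same boundedness input.

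The principal obstacle I foresee is obtaining the effective lower bound on $h^0(X,rK_X)$ valid at the small values of $r$ in the statement, together with the case analysis in (i) excluding intermediate-Kodaira-dimension fibers at those same thresholds. These two delicate inputs are what pin down the precise arithmetic of $r\geq 2$ in dimension $3$ and $r\geq 4$ in dimension $4$.
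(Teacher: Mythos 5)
Your plan diverges substantially from the paper's, and I do not think it can be made to work at the stated thresholds. Two specific points.

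First, your step (i) is not where the difficulty lies. For a fibration $f:\tilde X\to Y$ with $\tilde X$ of general type and general fiber $F$, the easy subadditivity inequality $\kappa(\tilde X)\leqslant \dim Y + \kappa(F)$ already forces $\kappa(F)=\dim F$, so $F$ is of general type regardless of $r$. The thresholds $r\geqslant 2$ and $r\geqslant 4$ do not enter through a Kodaira-dimension case analysis; in the paper they come from the coefficients $\lambda$ needed to create isolated lc centers at points of the tiger fibers (roughly $\lambda<3$ for curves via $|3K_C|$, $\lambda<5/2+\epsilon$ for surfaces via Lemma~\ref{lcsurface}) combined with the lifting estimate of Lemma~\ref{serrelift} and the $\lceil\lambda\rceil$ in Lemma~\ref{separation}.

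Second, and more seriously, step (ii) rests on an effective uniform lower bound of the shape $h^0(X,rK_X)\geqslant c\, r^n\operatorname{vol}(K_X)$ at $r=2$ (dim $3$) and $r=4$ (dim $4$). No such bound is available: the only hypothesis on the plurigenus is $h^0(X,rK_X)>0$, and Riemann--Roch only furnishes asymptotics as $r\to\infty$, not an effective lower bound linear in $\operatorname{vol}(K_X)$ for a fixed small $r$. Without it, your Castelnuovo bound $h^0(Y,H)\leqslant\operatorname{vol}(H)+k$ gives no control on $\operatorname{vol}(H)$ from below, and the chain collapses. There is also a subsidiary issue with the intersection inequality $(M+E)^n\geqslant\binom{n}{k}M^k\cdot E^{n-k}$: while $M$ can be taken nef, $E$ need not be, and $M^k\cdot E^{n-k}$ need not be nonnegative, let alone equal to $\operatorname{vol}(H)\,(E|_F)^d$; the paper's Lemma~\ref{volXvolF} handles the analogous computation by first passing to the canonical models $X_0$ and $F_0$, where $K_{X_0}$ and $K_{F_0}$ are nef.

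By contrast, the paper's proof of Theorems~\ref{theorem1} and \ref{theorem2} goes in the opposite logical direction. It uses Lemma~\ref{volXvolF} to show that $\operatorname{vol}(F)$ large forces $\operatorname{vol}(X)$ large; then, from $\operatorname{vol}(X)$ large, it produces a birational family of tigers with the separation property (Lemma~\ref{strongtigers}, via the Ein--K\"uchle--Lazarsfeld smoothing argument), does a case analysis on the fiber dimension $d$ of the tiger family, and either separates two general points of a fiber of $\varphi_{rK_X}$ (using Lemmas~\ref{lcsurface}, \ref{serrelift}, \ref{separation} and the curve case) or invokes the Chen--Jiang lifting theorem (Theorem~\ref{liftfibration}) when $d=n-1$. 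In every case one obtains a contradiction with the assumption that $\operatorname{vol}(F)$ is large. Your proposal never invokes the separation property, which is the central mechanism of the paper, and its direct estimate of $\operatorname{vol}(F)$ appears to require input that the hypotheses do not provide.
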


For surfaces of general type Beauville \cite{beauville} proved that the canonical maps have birationally bounded fibers if $p_g>0$. 
Hacon showed in \cite{hacon}, however, that there exist threefolds of general type whose canonical maps are generically finite of arbitrarily large degree. 
In particular, Theorem \ref{thesis1} is sharp for threefolds. 

Hacon, M$^\text{c}$Kernan \cite{haconmckernan} and Takayama \cite{takayama} have proved 
that there exist integers $r_n$ such that if $X$ is a smooth projective variety of general type and dimension $n$, 
then the pluricanonical maps $\varphi_{rK_X}$ are birational for all $r\geqslant r_n$. We prove a higher dimensional analogue of Theorem \ref{thesis1} in terms of the numbers $r_n$.

\begin{theorem}\label{thesis2}
Let $n>0$ and $r\geqslant (n-2)r_{n-2}+2$ be positive integers. 
If the $r$-th plurigenus is not zero, then the $r$-th canonical maps of smooth projective varieties of general type and dimension $n$ have birationally bounded fibers.
\end{theorem}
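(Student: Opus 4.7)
The natural approach is an induction on dimension that reduces the problem to the $(n-2)$-dimensional birational pluricanonical bound $r_{n-2}$ via a double adjunction; the arithmetic $(n-2)r_{n-2}+2$ reflects precisely this, with the ``$+2$'' paying for two adjunctions and the drop in dimension producing a general-type subvariety on which $r_{n-2}$ is the relevant constant. If $\varphi := \varphi_{rK_X}$ is generically finite the statement is vacuous, so after resolving indeterminacies I would assume that $\varphi: X \to Y$ is a morphism with connected fibers, that $rK_X = M + E$ with $|M|$ basepoint free and $E$ the fixed part, and let $F$ be a general smooth fiber, so that $K_F = K_X|_F$ since $N_{F/X}$ is trivial on a general fiber.

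The main geometric construction is to choose two general members $M_1, M_2 \in |M|$ coming from hyperplanes of $Y$ through $y := \varphi(F)$, and set $S := M_1 \cap M_2$. By Bertini, $S$ is smooth of dimension $n-2$ and contains $F$, and two adjunctions give
\[
K_S = (K_X + M_1 + M_2)|_S = (K_X + 2M)|_S,
\]
which is big, so $S$ is of general type. Crucially, $M|_F$ is trivial as a line bundle (since $\varphi$ contracts $F$), whence $K_S|_F = K_F$. By the defining property of $r_{n-2}$, the system $|r_{n-2} K_S|$ is birational on $S$ and hence separates two general points $p, q \in F \subset S$; restricting the separating sections from $S$ to $F$ produces sections of $r_{n-2} K_F$ that still separate $p$ and $q$, so $|r_{n-2} K_F|$ is birational on $F$.

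To conclude birational boundedness of the family of fibers, I would combine this birationality of a pluricanonical system of bounded index on $F$ with the Hacon--McKernan--Xu boundedness theorem, which additionally requires a universal upper bound on $\mathrm{vol}(K_F)$ in terms of $n$ and $r$. Establishing this volume bound is the main obstacle I anticipate: naïve estimates from $rK_F \sim E|_F$ yield only $X$-dependent bounds such as $\mathrm{vol}(K_F) \leq r^{\dim Y}\,\mathrm{vol}(K_X)/\binom{n}{\dim Y}$, so the full strength of the hypothesis $r \geq (n-2)r_{n-2}+2$ must be invoked, presumably by bounding $h^0(F, r_{n-2} K_F)$ via the image of the restriction $H^0(S, r_{n-2} K_S) \to H^0(F, r_{n-2} K_F)$ and asymptotic Riemann--Roch on $F$. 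Some care is also required in the degenerate cases $\dim Y \leq 1$, where the construction must be modified to use only a single adjunction, and $E|_F = 0$, where $F$ need not be of general type and a separate argument must be supplied.
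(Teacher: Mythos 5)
Your approach is entirely different from the paper's, and it contains a genuine gap that the adjunction construction cannot close. The paper proves this (as Theorem~\ref{theorem3} in Section~\ref{results}, stated there with the slightly smaller constant $d_{n-2}$) by contradiction: if $\operatorname{vol}(F)$ were large, Lemma~\ref{volXvolF} forces $\operatorname{vol}(X)$ to be large; Lemma~\ref{initial2} (built on Theorem~\ref{strongtigersintro}) then produces a birational family of tigers of bounded weight relative to $K_X$ with the separation property, whose fibers do not contain the general fiber of $\varphi_{rK_X}$; and depending on the dimension $d$ of the tiger fibers one shows $\varphi_{rK_X}$ would separate two points of $F$ --- via an isolated lc center from \cite[Lemma 2.8]{haconmckernan} lifted by Lemma~\ref{serrelift} and Nadel vanishing through Lemma~\ref{separation} when $d\leq n-2$, which is exactly where $r\geq (n-2)d_{n-2}+2$ is consumed, or via Chen--Jiang's lifting (Theorem~\ref{liftfibration}) when $d=n-1$. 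This contradicts the assumption that $F$ is a fiber, so $\operatorname{vol}(F)$ is bounded, and Lemma~\ref{bound} concludes. There is no analogue of a hyperplane-cut $S$ anywhere in that proof.

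The gap in your proposal is precisely the one you flag at the end: you have no mechanism for the required volume bound, and the adjunction machinery does not supply one. What your double cut actually establishes --- that $|r_{n-2}K_F|$ is birational on $F$ --- is essentially automatic once $\dim F\leq n-2$, since by definition $|r_{\dim F}K_F|$ is already birational and $r_{\dim F}\leq r_{n-2}$; the detour through $S$ adds nothing, and in particular your argument never uses the hypothesis $r\geq (n-2)r_{n-2}+2$, which is a strong signal it is incomplete. Birationality of a single pluricanonical system of fixed index is far weaker than birational boundedness: for curves $|3K_C|$ is always very ample, yet $g(C)$ is unbounded. The remedy you suggest --- bounding $h^0(F,r_{n-2}K_F)$ via the image of the restriction $H^0(S,r_{n-2}K_S)\to H^0(F,r_{n-2}K_F)$ together with asymptotic Riemann--Roch --- cannot work, since $h^0(S,r_{n-2}K_S)$ is itself unbounded as $X$ varies, there is no reason the image is small, and a bound on a single $h^0$ does not control the volume. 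Two secondary inaccuracies: the generically finite case is not vacuous, since birationally bounded zero-dimensional fibers means bounded degree, which is exactly what can fail (Hacon's threefold examples at $r=1$); the paper's Case $d=0$ handles it by showing $\varphi_{rK_X}$ is in fact birational. And a general point of $F$ is not a general point of $S$, so the assertion that $|r_{n-2}K_S|$ ``separates two general $p,q\in F\subset S$'' needs a separate argument that $F$ is not contained in the non-isomorphism locus of $\varphi_{|r_{n-2}K_S|}$.
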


It is an interesting but very hard problem to estimate the constants $r_n$. It is classically known that $r_1 = 3$, and Bombieri \cite{bombieri} proved that $r_2 = 5$.
For threefolds, however, we only know that $27\leqslant r_3 \leqslant 61$ by work of Iano-Fletcher \cite{iano} and Chen-Chen \cite{chen1}.
Somewhat surprisingly, if we only consider threefolds of large volume, Todorov \cite{todorov} has shown that $\varphi_{rK_X}$ is birational already for $r\geqslant r_2=5$. 
Similarly, in \cite{chen2} Chen and Jiang proved that for fourfolds of large volume $\varphi_{rK_X}$ is birational already for $r\geqslant r_3$. 
Moreover, they conjectured that if $X$ is a variety of large volume and dimension $n$, then $\varphi_{rK_X}$ is birational for $r\geqslant r_{n-1}$ (see \cite[Question 6.1]{chen2}).
In this direction we prove:

\begin{theorem}\label{thesis3}
Let $X$ be a smooth $n$-dimensional variety of general type. If $\operatorname{vol}(X)\gg 1$ then $\varphi_{rK_X}$ is birational for every 
\[
r\geqslant \operatorname{max}\{r_{n-1}, (n-1)r_{n-2} + 2\}.
\]
\end{theorem}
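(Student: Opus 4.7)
The plan is to apply the standard non-klt center technique underlying \cite{haconmckernan,takayama,todorov,chen2}, amplified by the large-volume hypothesis. To show $\varphi_{rK_X}$ is birational it suffices to separate two general points $x,y \in X$ by sections of $rK_X$. The bound $r \geq \max\{r_{n-1},(n-1)r_{n-2}+2\}$ splits naturally into two sub-arguments, one producing a divisorial non-klt center and invoking $r_{n-1}$-birationality in dimension $n-1$, the other producing a non-klt center of dimension at most $n-2$ and invoking $r_{n-2}$-birationality after an inductive restriction step governed by Theorem \ref{thesis2}.

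First, I would use $\operatorname{vol}(X) \gg 1$ together with the asymptotic Riemann--Roch formula $h^0(X, mK_X) \sim \operatorname{vol}(X)\, m^n/n!$ to produce, for any chosen $\varepsilon > 0$, an effective $\mathbb{Q}$-divisor $D \sim_{\mathbb{Q}} \lambda K_X$ with $\lambda < \varepsilon$ such that the pair $(X,D)$ is not klt at both $x$ and $y$; a perturbation and tie-breaking step then lets me assume there is a unique minimal non-klt center $V$ through $x$, with either $y \in V$ or $y$ lying in a disjoint component of the non-klt locus (the latter case is handled immediately by Nadel vanishing).

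When $\dim V = n-1$, $V$ is a divisor, and Kawamata's subadjunction furnishes a klt boundary $\Delta$ with $(K_X + V)|_V \equiv K_V + \Delta$. Because $V$ inherits a large volume from $X$ (thanks to $\lambda$ small), the $r_{n-1}$-pluricanonical map of $(V,\Delta)$ is birational, and Kawamata--Viehweg vanishing lifts the separating sections back to $X$ as soon as $r \geq r_{n-1}$. When $\dim V \leq n-2$, I would cut $V$ once more by a second small-coefficient non-klt divisor obtained from $\operatorname{vol}(X) \gg 1$, reducing to a non-klt center $W$ of dimension at most $n-2$, and then apply Theorem \ref{thesis2} in dimension $n-1$ combined with $r_{n-2}$-birationality on a suitable model of $W$; the bookkeeping of two successive restrictions and of the errors introduced by subadjunction is what yields the bound $(n-1)r_{n-2} + 2$.

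The main obstacle I anticipate is precisely this intermediate case $1 \leq \dim V \leq n-2$: the minimal non-klt center $V$ need not be of general type, so one cannot directly invoke the definition of $r_{n-2}$ on $V$. To handle this I would rely on the klt pair $(V, \Delta_V)$ provided by Kawamata's subadjunction and argue that the large volume of $X$ forces the log volume of $(V,\Delta_V)$ to be unbounded, which allows pluricanonical birationality theorems for klt pairs to apply in the appropriate dimension. Controlling how the coefficients $\lambda$ of the successive non-klt divisors (which can be made arbitrarily small thanks to $\operatorname{vol}(X) \gg 1$) combine with the $+2$ arising from the two adjunction steps, so as to fit within the stated bound, is the most delicate bookkeeping step but should require no new conceptual input beyond the ingredients listed above.
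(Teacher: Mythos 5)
Your proposal follows the classical non-klt center route of Takayama and Hacon--M$^{\text{c}}$Kernan: produce a small-coefficient divisor singular at both $x$ and $y$, tie-break to a minimal lc center $V$, and cut down by cases on $\dim V$. The paper's proof does something genuinely different, and the difference is the whole point of the paper. The paper invokes Lemma \ref{initial1} (a consequence of Theorem \ref{strongtigersintro}) to produce a birational family of \emph{intrinsic} tigers $f\colon Y\to B$ through the general point of $X$, already equipped with the \emph{separation property} and with fibers $Y_b$ of bounded volume. The case analysis is then on the fiber dimension $d$ of this family: $d=n-1$ is handled directly by Chen--Jiang's lifting theorem (Theorem \ref{liftfibration}), contributing $r_{n-1}$; $d\leqslant n-2$ is handled by applying $r_{n-2}$-birationality to the general-type fibers $Y_{b_i}$, Lemma \ref{lcfinite} to cut to a point, and Lemma \ref{serrelift} to lift, contributing $(n-1)r_{n-2}+2$.

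There are concrete gaps in your route. First, the step where you assume a tie-break yields a unique minimal non-klt center $V$ at $x$ while retaining non-klt-ness at $y$ is exactly the obstruction the paper singles out in the introduction: if $x\in\operatorname{Supp}(D_2)$ and $y\in\operatorname{Supp}(D_1)$, the log canonical threshold of $D_1+D_2$ at $x$ may be dominated by $D_2$ and the minimal lc center may connect $x$ and $y$. The separation property of intrinsic tiger families is constructed precisely to forbid this, using the base loci $B_x^k$ and Ein--K\"uchle--Lazarsfeld's differentiation lemma (Lemma \ref{familymult}), not merely tie-breaking. Second, your nested cutting-down in the case $\dim V\leqslant n-2$ via Kawamata subadjunction would not produce the sharp coefficient $(n-1)r_{n-2}+2$: subadjunction errors and the large-log-volume hypothesis needed on each successive center compound, and the classical route along these lines yields bounds growing like iterated products of the $r_i$, not a single factor of $r_{n-2}$. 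Third, your worry that $V$ may fail to be of general type, and the proposed remedy of pluricanonical birationality theorems for klt pairs, is not part of the paper's argument and is not needed: the fibers $Y_b$ pass through a general point of $X$, so they are of general type, and $r_{n-2}$ applies to them directly. Finally, invoking Theorem \ref{thesis2} inside the proof of Theorem \ref{thesis3} is not what the paper does; both are deduced from the same underlying machinery (Lemma \ref{initial1}/Lemma \ref{initial2}) rather than one from the other.
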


It is then very natural to make the following conjecture:

\begin{conjecture}
$r_{n+1}\geqslant (n+1)r_n + 2$ for every integer $n\geqslant 2$.
\end{conjecture}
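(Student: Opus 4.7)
Since the conjecture is a lower bound on $r_{n+1}$, to prove it one must exhibit, for every $n\geq 2$, a smooth projective $(n+1)$-fold $Y$ of general type for which $\varphi_{rK_Y}$ fails to be birational at $r=(n+1)r_n+1$. My plan is to construct such examples explicitly, in the spirit of the extremal weighted hypersurface $X_{46}\subset\mathbb{P}(4,5,6,7,23)$ of Iano-Fletcher, which already realizes $r_3\geq 27>3r_2+2=17$.

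First, I would fix an $n$-fold $X$ at (or very close to) the threshold $r_n$, realized concretely as a weighted hypersurface $X_d\subset\mathbb{P}(a_0,\ldots,a_{n+1})$ with $K_X=\mathcal{O}_X(1)$, so that $\varphi_{(r_n-1)K_X}$ is not birational and this failure is governed by the largest weight $a_{n+1}$. The pluricanonical maps of such an $X$ are the restrictions of the maps $\mathbb{P}(a_0,\ldots,a_{n+1})\dashrightarrow\mathbb{P}^N$ defined by degree-$r$ monomials, which makes their threshold behavior computable.

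Next, I would enlarge $X$ to an $(n+1)$-fold $Y=Y_{d'}\subset\mathbb{P}(a_0,\ldots,a_{n+1},a_{n+2})$ with $K_Y=\mathcal{O}_Y(1)$, calibrating the new weight $a_{n+2}$ so that the pluricanonical map $\varphi_{rK_Y}$ first becomes birational at $r=(n+1)r_n+2$. Verification would proceed by checking that $Y$ is of general type with at worst canonical singularities, and by computing $H^0(Y,rK_Y)$ from the graded coordinate ring of the ambient weighted projective space in order to exhibit, for $r=(n+1)r_n+1$, a pair of points of $Y$ not separated by $|rK_Y|$.

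The principal obstacle is the combinatorics of choosing $a_{n+2}$. The factor $(n+1)$ in the conjectured bound strongly suggests $a_{n+2}$ must be of order $(n+1)r_n$, but the simultaneous requirements of well-formedness, quasi-smoothness, canonical singularities, and precise threshold behavior of the pluricanonical system become increasingly restrictive as $n$ grows. Known extremal examples exist essentially only in low dimensions (via the Reid/Iano-Fletcher classification) and already exceed the conjectured bound, so a uniform construction across all $n\geq 2$ may well require ideas beyond the weighted hypersurface framework; a more conceptual alternative would be to interpret the factor $(n+1)$ as arising from the Iitaka fibration of a carefully chosen cyclic cover branched along a pluricanonical divisor, where the degree of the cover is tuned to be $n+1$ and the branch shifts the threshold multiplicatively. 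For this reason, I expect the main difficulty to lie not in verifying the bound in any single dimension, but in producing a family of extremal examples in every dimension simultaneously.
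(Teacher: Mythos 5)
The statement you were asked to prove is labeled a conjecture in the paper and carries no proof there; it is offered as a natural extrapolation from Theorem~\ref{thesis3}, where the bound $\operatorname{max}\{r_{n-1},(n-1)r_{n-2}+2\}$ would collapse to $r_{n-1}$ if the conjectured inequality held, which in turn would settle Chen--Jiang's question for varieties of large volume. So there is nothing in the paper to compare your argument against, and the right verdict for your attempt is that it is a research program, not a proof.

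That said, your outline correctly identifies the shape any proof must take. Since $r_{n+1}$ is defined as a minimum over all smooth projective $(n+1)$-folds of general type, a lower bound can only be established by exhibiting, for each $n\geqslant 2$, an $(n+1)$-fold on which $|rK|$ fails to be birational at $r=(n+1)r_n+1$; weighted hypersurfaces are the standard source, and your benchmark $X_{46}\subset\mathbb{P}(4,5,6,7,23)$ with $r_3\geqslant 27 > 3r_2+2=17$ does indeed dispose of the case $n=2$. But you concede the central gap yourself: no such extremal examples are known in higher dimensions, the constraints of well-formedness, quasi-smoothness, canonical singularities, and precise threshold behavior become intractable as $n$ grows, and nothing in your sketch shows that the factor $(n+1)$ is actually forced by the weight combinatorics rather than being an artifact of Theorem~\ref{thesis3}'s upper-bound side. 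Your fallback idea --- a degree-$(n+1)$ cyclic cover branched along a pluricanonical divisor, tuned so the branch shifts the birationality threshold multiplicatively --- is a plausible heuristic, but you do not verify that such a cover stays of general type with canonical singularities, nor that the failure of birationality persists all the way up to $r=(n+1)r_n+1$ rather than to a smaller $r$. Until either of these constructions is carried out, the conjecture remains open both in the paper and in your account.
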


We give now an informal sketch of proof of the above results. We will focus on showing that the second pluricanonical maps of surfaces of general type have birationally
bounded fibers, as this case already contains many of the ideas we need in higher dimensions. 
Proceeding by contradiction, one may assume that there is a smooth projective surface $S$ whose second pluricanonical map has fibers of large volume. It's easy to see then that
$S$ itself has large volume. 
A well established method in birational geometry is to study divisors of large volume by creating log canonical centers first and then applying vanishing theorems. 
To that end, recall the following definition from \cite{jamesfano}:

\begin{definition}
Let $X$ be a smooth projective variety and $D$ a $\mathbb{Q}$-divisor. 
We say that pairs of the form $(D_t, V_t)$ form a birational family of tigers of dimension $k$ and weight $w$ relative to $D$ if

\begin{enumerate}
\item There is a projective morphism $f:Y\rightarrow B$ of normal projective varieties and an open subset $U$ of $B$ such that the fiber of $f$ over $t\in U$ is $V_t$.
\item There is a morphism of $B$ to the Hilbert scheme of $X$ such that $B$ is the normalization of its image and $f$ is obtained by taking the normalization of the 
universal family. 
\item If $\pi:Y\rightarrow X$ is the natural morphism then $\pi(V_t)$ is a log canonical center of $D_t$.
\item $\pi$ is birational.
\item $D_t\sim_\mathbb{Q} \frac{1}{w} D$.
\item The dimension of $V_t$ is $k$.
\end{enumerate}

\end{definition}

A first natural approach to the problem is then to take a birational family of tigers of large weight relative to $K_S$ (see \cite[Lemma 3.3 (3)]{jamesfano}), and use the geometry of the log canonical centers to
separate two points $x_1$ and $x_2$ of a general fiber of $\varphi_{2K_S}$, obtaining a contradiction. 

To clarify where the main difficulty lies, suppose for the moment that we get a birational family of zero dimensional tigers. This should be considered a \say{lucky} case, as one might then hope
to separate $x_1$ and $x_2$ just by using Nadel's vanishing theorem. For $i=1,2$, let $D_i \sim_\mathbb{Q} \lambda_i K_S$ with 
$0<\lambda_i\ll 1$ and such that $x_i$ is a log canonical center of $D_i$. If we take $D=D_1 + D_2$ and consider its log canonical threshold $c$ at one of the $x_i$, however, 
it is not true in general that the points $x_i$ will still be log canonical centers for $cD$. 
It might in fact happen that $x_1 \in \operatorname{Supp}(D_2)$ and $x_2\in \operatorname{Supp}(D_1)$ and that the minimal log canonical center connects $x_1$ and $x_2$.

We formalize this difficulty in the following definition:

\begin{definition}
Let $X$ be a smooth projective variety of dimension $n$ and $D$ a $\mathbb{Q}$-divisor. Let $f:Y\rightarrow B$ a birational family of tigers of weight $w$ relative to $D$ and let $\pi:Y\rightarrow X$ be
the natural map.
We say that this family has the separation property if for general $b_1$, $b_2\in B$ there exists $D_{b_1, b_2}\sim_\mathbb{Q} \lambda D$ with $\lambda \leqslant (n+1)/w$ such that, after
possibly switching $b_1$ and $b_2$:
\begin{enumerate}
\item $\pi(f^{-1} (b_1))$ is an lc center of $D_{b_1,b_2}$.
\item $D_{b_1,b_2}$ is not klt at the generic point of $\pi(f^{-1} (b_2))$.
\end{enumerate}
\end{definition}

The main technical result of this paper, and indeed the key ingredient in the above discussion, is the existence of birational families of tigers of large weight that have the separation property. 
More precisely:

\begin{theorem}\label{strongtigersintro}
Let $X$ be a smooth $n$-dimensional variety and let $D$ be a big and nef $\mathbb{Q}$-divisor.
If $\operatorname{vol}(X,D)>(2^n wn^2)^n$ for some rational number $w>0$, there exists a birational family of tigers $f$ of weight $w$ with respect to $D$ such that
\begin{enumerate}
\item $f$ has the separation property.
\item $\operatorname{vol}(Y_b, (\pi^* D)_{|_{Y_b}}) \leqslant (2^n wn^2)^n$ for general $b\in B$.
\end{enumerate}
\end{theorem}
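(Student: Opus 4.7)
The plan is to build the family of tigers via a multiplicity argument and Hilbert-scheme parametrization, and then verify the separation property by combining two ``single-point'' tiger divisors at generic parameters.

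For the construction, the hypothesis $\operatorname{vol}(X, D) > (2^n w n^2)^n$ combined with Riemann--Roch on $|kD|$ produces, for a general $x \in X$, a section vanishing to order $> nkw$ at $x$. Normalizing yields a $\mathbb{Q}$-divisor $E_x \sim_{\mathbb{Q}} \frac{1}{w}D$ of multiplicity $> n$ at $x$, hence $(X, E_x)$ is non-klt at $x$. Applying the Angehrn--Siu dimension-cutting iteratively on the lc center (each step requiring enough residual volume on the current center, which is precisely what the explicit constant guarantees) followed by Koll\'ar's tiebreaking procedure produces $D_x \sim_{\mathbb{Q}} \frac{1}{w}D$ admitting a minimal lc center $V_x$ through $x$. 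As $x$ varies in a dense open subset, the centers $V_x$ are parametrized by a subscheme of the Hilbert scheme of $X$; taking the normalization $B$ of this subscheme and the normalization of the universal family gives $f\colon Y \to B$ with $\pi\colon Y \to X$ satisfying the definition of a birational family of tigers of weight $w$. The fiber volume bound $\operatorname{vol}(Y_b, (\pi^*D)|_{Y_b}) \leqslant (2^n w n^2)^n$ is then automatic by minimality: if the bound failed for general $b$, iterating the construction on $V_b$ itself would produce a strictly smaller lc center through every general point of $V_b$, contradicting the choice of $V_b$ as minimal.

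For the separation property, pick general $b_1, b_2 \in B$ and let $V_i = \pi(f^{-1}(b_i))$ and $D_{b_i}\sim_{\mathbb{Q}}\frac{1}{w}D$ be the corresponding centers and divisors. Since the $V_b$'s sweep out $X$ and each $\operatorname{Supp}(D_b)$ is a proper subvariety, for generic $(b_1, b_2)$ one has $V_{b_1}\not\subseteq\operatorname{Supp}(D_{b_2})$, so the candidate
\[
D_{b_1,b_2} := D_{b_1} + D_{b_2} \sim_{\mathbb{Q}} \tfrac{2}{w}D
\]
receives no contribution from $D_{b_2}$ to the log discrepancy along $V_{b_1}$, leaving $V_{b_1}$ an lc center of the sum, while $V_{b_2}$ remains non-klt from the $D_{b_2}$ summand. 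The coefficient $2/w$ is well within the bound $(n+1)/w$; in degenerate configurations, most notably when $V_{b_1}$ is itself a divisor contained in $\operatorname{Supp}(D_{b_2})$, one uses an additional tiebreaking perturbation of coefficient $\leqslant (n-1)/w$ taken very generically in $|kD|$ to restore the lc structure, thereby saturating the advertised bound.

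The main obstacle is preventing the sum $D_{b_1}+D_{b_2}$ from developing a new connecting lc stratum that absorbs both centers $V_{b_1}$ and $V_{b_2}$ into a larger, useless non-klt locus. This has to be handled by choosing the auxiliary perturbing divisor very generically in $|kD|$ for $k \gg 0$, and by invoking Kawamata subadjunction to descend the lc structure cleanly to $V_{b_1}$. Verifying that all the requisite genericity conditions persist in family over $B\times B$, so that the separation property holds on a dense open subset of pairs $(b_1, b_2)$ and not merely for a single choice, is the technically delicate step of the argument.
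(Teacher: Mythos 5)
Your proposal takes a genuinely different route from the paper, but it has serious gaps; I do not think it can be repaired without essentially rewriting it.

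The paper's proof (Lemma \ref{strongtigers}) does \emph{not} use Angehrn--Siu dimension cutting. It introduces the notion of a birational family of \emph{intrinsic} tigers, built from the base loci $B_x^k(lD)$ of the linear subseries $S_x^k(lD)$ of high-multiplicity divisors, and proves via the Ein--K\"uchle--Lazarsfeld smoothing result (Lemma \ref{familymult}) the key multiplicity estimate: when $d(lD,k)=d(lD,2k)$, \emph{every} $H\in S_x^k(lD)$ is highly singular along $B_x^k(lD)$ (Lemma \ref{nojump}). The separation property then follows (Lemma \ref{firstorderseparation}) because for general distinct $b_1,b_2$ the point $y$ is not in the base locus $B_x^k(lD)$, so a general member $H_1\in S_x^k(lD)$ \emph{automatically} avoids $y$ while \emph{still} having the required multiplicity along $B_x^k$ --- this is precisely what the intrinsic structure buys you. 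Your proposal is essentially the ``first natural approach'' the introduction sketches and then explains does not obviously work (``It might in fact happen that $x_1\in\operatorname{Supp}(D_2)$ and $x_2\in\operatorname{Supp}(D_1)$ and that the minimal log canonical center connects $x_1$ and $x_2$''); you address this concern only by asserting genericity, but Angehrn--Siu divisors are built through tie-breaking and lifting steps that need not vary algebraically in $b$, so your claimed dense open locus of good pairs in $B\times B$ is not established.

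There are two sharper gaps. First, the claim that the Angehrn--Siu iteration works because ``each step requiring enough residual volume on the current center \dots is precisely what the explicit constant guarantees'' is false: the hypothesis controls only $\operatorname{vol}(X,D)$ and says nothing about $\operatorname{vol}(D|_V)$ for an intermediate lc center $V$, which is exactly what repeated Angehrn--Siu cutting needs. Related to this, you assert the final divisor is still $\sim_{\mathbb{Q}}\frac{1}{w}D$ after iteration, but each lifting step increases the coefficient and you never account for this. Second, the argument that the fiber volume bound is ``automatic by minimality'' is a non-sequitur: minimality of $V_b$ is relative to the specific divisor $D_x\sim_{\mathbb{Q}}\frac{1}{w}D$. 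If $\operatorname{vol}((\pi^*D)|_{Y_b})$ were large you could indeed produce a smaller non-klt center through general points of $V_b$, but only for a \emph{different} divisor of strictly larger coefficient, which contradicts nothing. This is precisely where the paper invests its technical effort: Lemma \ref{refining1} and the proof of Lemma \ref{strongtigers} iteratively refine the family, replacing weight $wn$ by $\tfrac{1}{1/(wn)+1/(wn)}=wn/2$, then $wn/3$, and so on; since the fiber dimension strictly drops, this stops after at most $n-1$ steps with weight still at least $wn/n=w$. That weight bookkeeping is the heart of part (2) of the theorem, and it is entirely absent from your argument.
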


Thanks to Theorem \ref{strongtigersintro}, we may now take a birational family of tigers of large weight on $S$ that has the separation property. 
If we get a family of zero dimensional tigers, an almost immediate application of Nadel's vanishing theorem shows that $\varphi_{2K_S}$ is birational, which of course is a contradiction. Suppose instead that we get a family of 
one dimensional tigers. Then, by taking the Stein factorization of $f$, we obtain a fibration in curves of bounded volume. A theorem of Chen and Jiang (see Theorem \ref{liftfibration})
shows that under these hypothesis there is a surjective map 
\[
H^0(S, 2K_S)\rightarrow H^0(F_1, 2K_{F_1})\oplus H^0 (F_2, 2K_{F_2})
\]
for general fibers $F_1$ and $F_2$ of the fibration. Since the canonical maps of curves of general type have bounded fibers (of degree at most two), we get again a contradiction.

Given the central role of Theorem \ref{strongtigersintro} in the above discussion, we briefly sketch now the proof of its first part.  
Fix a divisor $D$ and let us make the simplifying assumption that $h^0(X,D)$ is very large.
Our approach is to find divisors which solve the separation problem first and then to impose conditions under which they actually give rise to a family of tigers.  
To that end we consider the base loci $B_x ^k$ of the linear subseries of $|D|$ formed by divisors $H_x ^k$ of multiplicity at least $k$ at $x$.
Notice that if $x_1 \notin B_{x_2}$ and $x_2 \notin B_{x_1}$, we can always find divisors $H _{x_1} ^k \sim D$ and $H_{x_2} ^k\sim D$ highly singular at $x_1$ and $x_2$ 
respectively but such that $x_1 \notin \operatorname{Supp}(H_{x_2} ^k)$ and $x_2\notin \operatorname{Supp}(H_{x_1} ^k)$.

The problem now, however, is that the base loci $B_x$ are not necessarily non klt centers for small multiples of the divisors $H_x ^k$. 
The idea here is to use a result of Ein, K\"{u}chle and Lazarsfeld \cite{ein_lazarsfeld_ample} on smoothings of divisors in families.
To see how this plays a role, suppose that there is some $k'\gg k$ such that $B_x ^k = B_x ^{k'}$ for all $x\in X$.
Take an affine open set $U$ of $X$ and consider a family of divisors $H_x ^{k'}$ parametrized by $U$.
After applying any differential operator of order $k'-k$ to this family, we get linearly equivalent divisors that have multiplicity at least $k$ at $x$, and therefore pass through $B_x$ by definition.
Since taking derivatives lowers multiplicity along $B_x$ for appropriate differential operators, it follows that every divisor $H_x ^{k'}$ has multiplicity at least $k'-k\gg 0$ along $B_x ^k$. 
By taking $H_x ^{k'}$ to be general, we have that $B_x ^k$ is a non klt center of small multiples of $H_x ^{k'}$. With some more work one can finally show that $(H_x ^{k'}, B_x ^{k'})$ 
form a birational family of tigers, which we call \say{intrinsic}. Naturally, this family has the separation property by construction, concluding the proof of Theorem \ref{strongtigersintro} (1).

The following simple example shows a particularly easy instance of Theorem \ref{strongtigersintro}. Let $X$ be the cone over the rational normal curve of degree $n$,
let $p\in X$ be its vertex and let $D$ be a hyperplane section. Clearly $B_x ^1 =\{x\}$ for all $x\in X$. For every $x\in X\setminus \{p\}$ the tangent plane $T_x X$ contains the line
$L_x$ connecting $x$ to $p$. Therefore $B_x ^2 = T_x X\cap X = L_x$. Since $D_{|_X}\sim nL_x$ we also have that $B_x ^n = L_x$.
Therefore $B_x ^i = L_x$ for $2\leqslant i \leqslant n$ and $B_x ^i = X$ for $i>n$. Let $\pi:Y\rightarrow X$ be the minimal resolution of $X$.
Notice that $Y=\mathbb{F}_n$ and $\pi$ is the contraction of the only $(-n)$-curve. 
The natural map $f:Y\rightarrow \mathbb{P}^1$ gives a birational family of tigers of weight $n$ relative to $D$, since the fibers of $f$ are just the strict transforms of the lines $L_x$.
Clearly this family has the separation property, and it is in fact a birational family of intrinsic tigers as in Definition \ref{intrinsictigers}. 
We refer to Section \ref{tigers} for a general way of producing other interesting examples.

\ \

\noindent \textbf{Acknowledgments}: I would like to thank my PhD advisor Prof. James M$^\text{c}$Kernan for introducing me to the problem and for the many helpful discussions that made this work possible. My gratitude also goes to Prof. Rita Pardini for kindly answering some questions related to the geometry of log canonical centers on surfaces. I would like to thank Calum Spicer and Roberto Svaldi for reading a preliminary draft of the paper. This work has benefitted from discussions with Iacopo Brivio and Giovanni Inchiostro. The author has been supported
by NSF research grants no: 1265263 and no: 1802460 and by a grant from the Simons Foundation \#409187.

\section{Preliminaries}

\subsection{Notation}

Much of the following notation is standard. A $\mathbb{Q}$-Cartier divisor $D$ on a normal variety $X$ is nef if $D\cdot C\geqslant 0$ for any curve $C\subseteq X$.
We use the symbol $\sim_\mathbb{Q}$ to indicate $\mathbb{Q}$-linear equivalence and the symbol $\equiv$ to indicate numerical equivalence.
A pair $(X,\Delta)$ consists of a normal variety $X$ and a $\mathbb{Q}$-Weil divisor $\Delta$ such that $K_X + \Delta$ is $\mathbb{Q}$-Cartier.
If $\Delta\geqslant 0$, we say $(X,\Delta)$ is a log pair. If $f:Y \rightarrow X$ is a birational morphism, we may write
$K_Y + f^{-1}_* \Delta = f^*(K_X+ \Delta) + \sum_i a_i E_i$ with $E_i$ $f$-exceptional divisors. A log pair $(X,\Delta)$ is called log canonical (or lc) if 
$a_i\geqslant -1$ for every $i$ and for every $f$, and it's called Kawamata log terminal (or klt) if $a_i>-1$ for every $i$ and $f$, and furthermore 
$\lfloor\Delta \rfloor=0$. The rational numbers $a_i$ are called the discrepancies of $E_i$ with respect to $(X,\Delta)$ and do not depend on $f$.
We say that a subvariety $V\subseteq X$ is a non klt center if it is the image of a divisor of discrepancy at most $-1$. 
A non klt center $V$ is a log canonical center if $(X,\Delta)$ is log canonical at the generic point of $V$.
A non klt place (respectively log canonical place) is a valuation corresponding to a divisor of discrepancy at most (respectively equal to) $-1$. 
The set of all log canonical centers passing though $x\in X$ is denoted by $\operatorname{LLC}(X,\Delta,x)$,
and the union of all the non klt centers is denoted by $\operatorname{Nklt}(X,\Delta,x)$.
A log canonical center is exceptional if $V$ has codimension at least two and there is a unique log canonical place
lying over the generic point of $V$. Finally, the log canonical threshold of $(X,\Delta)$ at a point $x$ is 
$\operatorname{lct}(X,\Delta,x)=\operatorname{sup}\{c>0| (X,c\Delta) \text{ is lc at $x$}\}$.

\subsection{Volumes and singular divisors}

\begin{definition}
Let $X$ be an equi-dimensional projective variety of dimension $n$ and let $D$ be a big $\mathbb{Q}$-Cartier divisor. The volume of $D$ is
\[
   \operatorname{vol}(X,D)=\limsup_{m\to\infty}\frac{n! h^0(X,mD)}{m^n}.
\]
\end{definition}

Naturally, if $X$ is irreducible this is the standard definition. We refer to \cite{lazarsfeld1} for a detailed treatment of the properties of the volume. 
We briefly mention here that the volume only depends on the numerical class of $D$, and if $D$ is nef then $\operatorname{vol}(X,D)=D^{\operatorname{dim}(X)}$. Furthermore, we define
$\operatorname{vol}(X)$ to be $\operatorname{vol}(Y, K_Y)$ for any smooth model $Y$ of $X$. Since every smooth variety is canonical, $\operatorname{vol}(X)$ is a birational invariant.

We will often be interested in creating highly singular divisors at a given point. To that end we state the following well known lemmas.

\begin{lemma}\label{conditions}
Let $X$ be an irreducible projective variety of dimension $n$, $x$ a smooth point of $X$ and $L$ a line bundle. The number of conditions for a section $s\in H^0 (X,L)$ to vanish
at $x$ to order at least $k$ is at most $\binom{n+k-1}{n}$.
\end{lemma}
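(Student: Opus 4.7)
The plan is to interpret the condition of vanishing to order at least $k$ at $x$ as a linear condition imposed by a finite-dimensional quotient of the local ring, and then to count the dimension of that quotient using the smoothness of $x$.

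First I would fix a trivialization of $L$ in a neighborhood of $x$, so that a section $s \in H^0(X, L)$ locally corresponds to a regular function at $x$. Vanishing to order at least $k$ at $x$ is then the statement that the germ of $s$ lies in $\mathfrak{m}_x^k \cdot L_x$, where $\mathfrak{m}_x \subset \mathcal{O}_{X,x}$ is the maximal ideal. Thus the sections vanishing to order at least $k$ at $x$ form the kernel of the natural evaluation-of-jets map
\[
H^0(X, L) \longrightarrow L_x / \mathfrak{m}_x^k L_x,
\]
and the number of conditions is the dimension of the image, which is bounded above by $\dim_{\mathbb{C}} L_x / \mathfrak{m}_x^k L_x = \dim_{\mathbb{C}} \mathcal{O}_{X,x}/\mathfrak{m}_x^k$.

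Since $x$ is a smooth point of $X$, the completed local ring $\widehat{\mathcal{O}}_{X,x}$ is isomorphic to a formal power series ring in $n$ variables, so $\mathcal{O}_{X,x}/\mathfrak{m}_x^k \cong \mathbb{C}[t_1,\dots,t_n]/(t_1,\dots,t_n)^k$. A basis for this quotient is given by the monomials of total degree strictly less than $k$, and their count is
\[
\sum_{i=0}^{k-1} \binom{n+i-1}{n-1} = \binom{n+k-1}{n}
\]
by the standard hockey-stick identity. Combining with the previous paragraph yields the desired bound.

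There is no real obstacle here beyond carefully using that smoothness is what guarantees $\dim \mathcal{O}_{X,x}/\mathfrak{m}_x^k$ attains the expected value $\binom{n+k-1}{n}$ (for a singular point, one only gets an inequality in the wrong direction). The bound in the lemma is not an equality because the evaluation map need not be surjective; irreducibility of $X$ is used only so that one may speak unambiguously of the dimension $n$.
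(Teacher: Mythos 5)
Your argument is correct and is the standard proof: pass to the quotient $\mathcal{O}_{X,x}/\mathfrak{m}_x^k$ via the jet evaluation map and count its dimension using regularity of the local ring. The paper states this lemma without proof as a well-known fact, so there is no proof in the text to compare against; your argument is exactly what one would supply.

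One small clarification on your closing remark: the reason irreducibility is harmless here is that a smooth point lies on a unique irreducible component, so $n$ is already determined locally at $x$ -- irreducibility in the statement is just a blanket hypothesis carried throughout the paper. Your observation that smoothness is essential is correct: at a singular point of embedding dimension $e > n$ one has $\dim \mathcal{O}_{X,x}/\mathfrak{m}_x^k \geqslant \binom{e+k-1}{e} > \binom{n+k-1}{n}$, so the inequality would go the wrong way.
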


\begin{lemma}\label{volumesections}
Let $X$ be an irreducible projective variety of dimension $n$, $x$ a smooth point of $X$ and $D$ a big $\mathbb{Q}$-Cartier divisor. Choose any $0<\epsilon\ll 1$.
Then for any sufficiently large integer $k\gg 0$ we have that 
\[
  h^0(X,kD)\geqslant \frac{(\operatorname{vol}(X,D)-\epsilon)k^n}{n!}
\]
\end{lemma}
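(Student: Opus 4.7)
The plan is to apply Fujita's approximation theorem, thereby reducing the estimate to the ample case where asymptotic Riemann--Roch provides a clean lower bound on $h^0$. The content of the lemma is essentially the stronger statement that the $\limsup$ in the definition of $\operatorname{vol}(X,D)$ is in fact a genuine limit when $D$ is big, and this is exactly what Fujita's theorem delivers.

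Concretely, I would fix $\delta = \epsilon/2$ and invoke Fujita approximation to produce a projective birational morphism $\mu\colon X'\to X$ with $X'$ smooth, together with a decomposition $\mu^* D = A + E$ of $\mathbb{Q}$-divisors, with $A$ ample, $E$ effective, and $A^n > \operatorname{vol}(X,D) - \delta$. Fixing a positive integer $N$ so that $NA$ and $NE$ are integral Cartier, for any large positive multiple $k$ of $N$ multiplication by the section cutting out $kE$ gives an injection $H^0(X', kA)\hookrightarrow H^0(X', k\mu^*D) \cong H^0(X, kD)$, where the last isomorphism comes from the projection formula together with the fact that $\mu$ is birational.

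Since $A$ is ample, asymptotic Riemann--Roch together with Serre vanishing yields
\[
h^0(X', kA) = \frac{A^n k^n}{n!} + O(k^{n-1}) \geq \frac{(\operatorname{vol}(X,D)-\epsilon)k^n}{n!}
\]
for all $k$ sufficiently large and divisible by $N$, after absorbing $\delta$ and the lower-order error term into the $\epsilon$ slack.

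The main (mild) obstacle is that the previous step only covers $k$ divisible by $N$, whereas the statement quantifies over all sufficiently large integers. I plan to dispatch this by working with $\lfloor kA \rfloor$ in place of $kA$ throughout, using the inequality $\lfloor kA\rfloor + \lfloor kE\rfloor \leq \lfloor k\mu^*D\rfloor$ to preserve the injection and applying asymptotic Riemann--Roch to $\lfloor kA\rfloor$ for $k\gg 0$; this eliminates the divisibility requirement entirely. Alternatively, one can multiply by a fixed effective section in some $|rD|$ (which exists for a suitable $r$ since $D$ is big) to move between residue classes modulo $N$. Either device is routine and does not affect the asymptotic bound.
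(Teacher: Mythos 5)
Your argument via Fujita approximation is exactly the proof of the results the paper cites from Lazarsfeld (that the $\limsup$ defining the volume of a big divisor is in fact a limit), so this is essentially the same approach. One small caution on your first device for removing the divisibility constraint: in general $\lfloor k\mu^*D\rfloor$ can strictly exceed $\mu^*\lfloor kD\rfloor$ along $\mu$-exceptional divisors, so the inclusion $H^0(X',\lfloor kA\rfloor)\hookrightarrow H^0(X',\lfloor k\mu^*D\rfloor)$ does not by itself give the lower bound on $h^0(X,\lfloor kD\rfloor)$ that you want; your second device (twisting by a fixed section of a suitable $|r'D|$) is the one that works, and in the paper's only application of the lemma $k$ is chosen so that $kD$ is integral, where the issue disappears anyway.
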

\begin{proof}
This follows from \cite[Corollary 2.1.38]{lazarsfeld1} and \cite[Example 11.4.7]{lazarsfeld1}.
\end{proof}

The following lemma relates the volume of a variety to the volume of the fibers of its pluricanonical maps. 

\begin{lemma}\label{volXvolF}
Let $X$ be a smooth projective variety of dimension $n$. Fix a positive integer $r$ and let $V\subseteq H^0(X,rK_X)$ be a vector space of dimension at least two.
Assume also that $\operatorname{Mov}|V|$ is base point free
and let $\phi_V : X\rightarrow \mathbb{P}^{\operatorname{dim}(V)-1}$ be the associated morphism. Finally, let $d=\operatorname{dim}(\phi_V(X))$ and let $F$ be the general fiber of $\phi_V$
(we don't assume $F$ to be connected). Then
\[
  \operatorname{vol}(X)>\frac{1}{(rn)^n}\cdot \operatorname{vol}(F)\cdot (\operatorname{dim}(V)-d)
\]
\end{lemma}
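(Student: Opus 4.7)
The plan is to bound $h^0(X, mrK_X)$ from below for $m \gg 0$ using the fibration $\phi_V$. I set $\pi = \phi_V$ and decompose $rK_X \sim_{\mathbb{Q}} M + E$, where $M = \pi^* \mathcal{O}_Y(1)$ is the base-point-free moving part of $V$ and $E \geq 0$ is the fixed part; $Y := \pi(X) \subseteq \mathbb{P}^{\dim V - 1}$ has dimension $d$, and is non-degenerate because $V$ is linearly independent in $H^0(X, rK_X)$, so the classical minimal-degree bound yields $\deg Y \geq \dim V - d$. By generic smoothness and adjunction, a general fiber $F$ is smooth with $rK_X|_F = rK_F$.

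The central estimate is as follows. For $m \gg 0$ and $a + b \leq m$, the divisor $mrK_X - (aM + brK_X) = (m - a - b)M + (m - b)E$ is effective, so multiplication by a cutting section yields $H^0(X, aM + brK_X) \hookrightarrow H^0(X, mrK_X)$, and the projection formula gives
\[
h^0(X, aM + brK_X) \;=\; h^0\bigl(Y,\, \mathcal{O}_Y(a) \otimes \pi_*(brK_X)\bigr).
\]
Asymptotic Riemann--Roch on $Y$ contributes a factor $\deg Y \cdot a^d / d!$ (for $a$ large), and the rank satisfies $\operatorname{rk}\pi_*(brK_X) = h^0(F, brK_F) \sim \operatorname{vol}(F, rK_F) \cdot b^{n-d}/(n-d)!$ (for $b$ large). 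Taking $a = b = \lfloor m/2 \rfloor$ and letting $m \to \infty$ should give
\[
\operatorname{vol}(X, rK_X) \;\geq\; \frac{\binom{n}{d}}{2^n} \cdot \operatorname{vol}(F, rK_F) \cdot \deg Y.
\]

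Substituting $\operatorname{vol}(X, rK_X) = r^n \operatorname{vol}(X)$, $\operatorname{vol}(F, rK_F) = r^{n-d}\operatorname{vol}(F)$, and $\deg Y \geq \dim V - d$ yields
\[
\operatorname{vol}(X) \;\geq\; \frac{\binom{n}{d}}{2^n r^d} \cdot \operatorname{vol}(F)(\dim V - d).
\]
The desired strict inequality $\operatorname{vol}(X) > \operatorname{vol}(F)(\dim V - d)/(rn)^n$ then reduces to the numerical bound $\binom{n}{d} \cdot r^{n-d} \cdot n^n > 2^n$, which always holds strictly for $n \geq 2$, $1 \leq d \leq n-1$, and $r \geq 1$; the remaining case $d = n$ is trivial since $F$ is $0$-dimensional and $\operatorname{vol}(F) = 0$.

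The hardest step will be the joint asymptotic in both $a$ and $b$: the sheaf $\pi_*(brK_X)$ varies with $b$, so Riemann--Roch in $a$ cannot simply be applied to a fixed sheaf, and the lower-order error terms must be controlled uniformly. I expect to handle this either by invoking uniform regularity bounds for pushforwards of canonical-type sheaves (via weak positivity of $\pi_*(rK_X)$-style results of Viehweg and Koll\'ar), or by directly tracking the leading terms of the Chern character of $\pi_*(brK_X)$ as $b$ grows, which depend polynomially on $b$ with controlled coefficients.
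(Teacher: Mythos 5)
Your proposal takes a genuinely different route from the paper's. The paper invokes the section-extension theorems of Kawamata and Chen--Jiang (\cite[Theorem 2.4 (2)]{chen2}, \cite[Theorem A]{kawamata}) to cut down by general hyperplanes $H_1,\dots,H_{d-1}$ on $\phi_V(X)$ and obtain the exact identity $|m(K_X+(d-1+\tfrac{1}{a})H)|_{|_F}=|mK_F|$, then passes to canonical models and finishes by an intersection-number computation $\operatorname{vol}(X)\geqslant \frac{a}{r^d}\cdot\frac{\operatorname{vol}(F)}{(1+r(d-1+1/a))^{n-d}}$. You instead propose a direct asymptotic count of sections via the projection formula $h^0(X,aM+brK_X)=h^0(Y,\mathcal{O}_Y(a)\otimes\pi_*(brK_X))$, with $a=b=\lfloor m/2\rfloor$; if your central estimate held, the numerics do close (you correctly reduce to $\binom{n}{d}r^{n-d}n^n>2^n$, which holds for $1\leqslant d\leqslant n-1$, and the case $d=n$ is trivially $\operatorname{vol}(F)=0$).

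However, the step you flag as hardest is a genuine gap, not merely a computation to be cleaned up. You need
\[
h^0\bigl(Y,\,\mathcal{O}_Y(a)\otimes\pi_*(brK_X)\bigr)\;\geqslant\;(1-\epsilon)\,\operatorname{rk}\pi_*(brK_X)\cdot\deg Y\cdot\frac{a^d}{d!}
\]
\emph{uniformly} as $a$ and $b$ both grow like $m/2$. Asymptotic Riemann--Roch gives $\chi(Y,\mathcal{F}_b(a))=\operatorname{rk}(\mathcal{F}_b)\deg Y\frac{a^d}{d!}+c_1(\mathcal{F}_b)\cdot\mathcal{O}(1)^{d-1}\frac{a^{d-1}}{(d-1)!}+\cdots$, where the subleading coefficients involve Chern classes of $\mathcal{F}_b=\pi_*(brK_X)$ that themselves grow like powers of $b$; with $a\sim b\sim m/2$ these ``error'' terms are of the same order $m^n$ as the main term, so they cannot be discarded without controlling their sign and size. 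Weak positivity of $\pi_*(brK_{X/Y})$ gives a one-sided statement on $c_1$ but says nothing about $c_2,\dots,c_d$, and does not in itself produce the vanishing of $H^i(Y,\mathcal{F}_b(a))$ for $i>0$ needed to pass from $\chi$ to $h^0$ — Serre vanishing gives this only for $a\gg 0$ \emph{depending on} $b$, which defeats the joint limit. Likewise, ``tracking the Chern character of $\pi_*(brK_X)$'' is essentially as hard as the theorem itself. There is also a subtler point: $\operatorname{rk}\pi_*(brK_X)=h^0(F,brK_F)$ holds for a general fiber depending on $b$; to use a single fixed $F$ for all $b$ you need either a very general choice or deformation invariance of plurigenera. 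The paper's use of the Chen--Jiang extension theorem is precisely the device that sidesteps all of these uniformity issues at once, by producing the exact restriction identity rather than estimating a pushforward. If you want to pursue your counting approach, I would suggest proving directly the intermediate inequality $\operatorname{vol}(X,\pi^*A+L)\geqslant\binom{n}{d}(A^d)\operatorname{vol}(F,L|_F)$ for $A$ nef on $Y$ — but be aware that all proofs of this I know of route through an extension theorem, so you are unlikely to genuinely avoid the paper's main ingredient.
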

\begin{proof}
This is essentially \cite[Section 5.3]{chen2}. We recall the argument presented there, indicating the points where it needs to be changed.

Take $d-1$ general hyperplane sections $H_1, ..., H_{d-1}$ of $\phi_V(X)$. Let $W=\bigcap_{i\leqslant d-1} H_i$, $X_W=\phi_V ^{-1} (W)$ and $X_{H_i}=\phi_V^{-1}(H_i)$.
All the divisors $X_{H_i}$ are linearly equivalent to a fixed divisor $H\in \operatorname{Mov}|V|$. Furthermore $H|_{X_W}\equiv aF$, with $a=H_1^d \geqslant \operatorname{dim}(V)-d$. 
Almost by definition we have that for large and divisible $m$
\[
|m(K_{X_W}+\frac{1}{a}H_{|_{X_W}})|_{|_F} = |mK_F|.
\]

By \cite[Theorem 2.4 (2)]{chen2} or \cite[Theorem A]{kawamata}, we have that:
\[
|m(K_X + X_1 + \sum_{i=2} ^{d-1} X_{H_i} + \frac{1}{a}H)|_{|_{X_W}}
=|m(K_{X_{H_1}}+ (\sum_{i=2} ^{d-1} X_{H_i} + \frac{1}{a}H)_{|_{X_{H_1}}})|_{|_{X_W}}.
\]

By continuing this process and by using the above relations, we get that for large and divisible $m$:

\[
  |m(K_X+(d-1+\frac{1}{a})H)|_{|_F} = |mK_F|.
\]

Notice that $rK_X-H$ is effective, so that:

\[
  |m(1+r(d-1+\frac{1}{a}))K_X|_{|_F} \geqslant |mK_F|.
\]

By \cite{bchm} we may take the canonical models $\pi_X : X\dashrightarrow X_0$ and $\pi_F : F\dashrightarrow F_0$. By the base point free theorem:
\[
\operatorname{Mov} |m(1+r(d-1+\frac{1}{a}))K_X|= |\pi_X^* (m(1+r(d-1+\frac{1}{a}))K_{X_0})|
\]
and
\[
\operatorname{Mov} |mK_F| = |\pi_F ^*(mK_{F_0})|.
\]

Therefore as $\mathbb{Q}$-divisors we may write:

\[
\pi_X ^*(K_{X_0})_{|_F} \geqslant \frac{1}{1+r(d-1+\frac{1}{a})} \pi_F ^* (K_{F_0}).
\]

Notice that $\pi_X ^* (rK_{X_0}) \geqslant H$. Therefore

\[
\operatorname{vol}(X)=(\pi_X^* (K_{X_0}))^n \geqslant \frac{1}{r^d} (H^d \cdot \pi_X ^*(K_{X_0})^{n-d})_X
\geqslant \frac{a}{r^d}\cdot \frac{\operatorname{vol}(F)}{(1+r(d-1+\frac{1}{a}))^{n-d}}
\]
which gives the result.
\end{proof}

Now we turn our attention to multiplicities of divisors in families. We start with an elementary lemma.

\begin{lemma}\label{fibermult}
Let $p:X\rightarrow T$ be a morphism of smooth varieties and let $D$ be a $\mathbb{Q}$-divisor on $X$. Then for a general point $t\in T$
\[
\operatorname{mult}_y (X,D) = \operatorname{mult}_y (X_t, D_t)
\]
for every $y\in X_t$.
\end{lemma}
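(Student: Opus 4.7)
The plan is, for each integer $k\geq 1$, to compare the ``absolute'' multiplicity locus $\Xi_k=\{y\in X:\operatorname{mult}_y D\geq k\}$ with the ``relative'' locus $\Psi_k=\{y\in X:\operatorname{mult}_y D_{p(y)}\geq k\}$ and show that a general fiber $X_t$ meets both in the same set. Since the multiplicities of $D$ are bounded, only finitely many $k$ are relevant, and the required set of $t$ is a finite intersection of dense opens in $T$.

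\textbf{Reductions and setup.} First I would reduce to the case where $D$ is a prime integral divisor (by additivity of multiplicity together with clearing denominators), shrink $T$ so that $p$ is smooth (generic smoothness, making every $X_t$ smooth), and assume that $D$ dominates $T$, since otherwise $X_t\cap D=\emptyset$ for general $t$. Working \'etale-locally around a point of a general fiber, I pick coordinates $(u_1,\dots,u_{n-d},t_1,\dots,t_d)$ on $X$ so that $p$ is projection to the $t$'s, with $d=\dim T$, and let $f$ be a local equation of $D$. Then $\Xi_k$ is cut out by $\{\partial_u^\alpha\partial_t^\beta f=0:|\alpha|+|\beta|<k\}$ and $\Psi_k$ by $\{\partial_u^\alpha f=0:|\alpha|<k\}$, so both are closed in $X$ and $\Xi_k\subset\Psi_k$.

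\textbf{Main claim.} It then suffices to show that no irreducible component $W\subset\Psi_k$ with $W\not\subset\Xi_k$ dominates $T$. Assume for contradiction that such a $W$ does dominate $T$. Generic smoothness of $p|_W$ supplies a point $y\in W$ where $W$ is smooth and $dp_y|_{T_yW}$ surjects onto $T_{p(y)}T$, so I may choose tangent vectors $v_i\in T_yW$ of the form $v_i=\partial_{t_i}+\sum_j c_{ij}\partial_{u_j}$. Because $\partial_u^\alpha f$ vanishes on $W$ for every $|\alpha|<k$, the directional derivative
\[
v_i(\partial_u^\alpha f)=\partial_{t_i}\partial_u^\alpha f+\sum_j c_{ij}\,\partial_u^{\alpha+e_j}f
\]
vanishes at $y$; for $|\alpha|<k-1$ each correction term $\partial_u^{\alpha+e_j}f$ is itself in the vanishing ideal of $W$, so $\partial_{t_i}\partial_u^\alpha f(y)=0$. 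Iterating this trick $|\beta|$ times --- at each stage the error derivatives introduced by the $c_{ij}\partial_{u_j}$ pieces have already been shown to vanish on $W$ at the previous stage --- one obtains $\partial_t^\beta\partial_u^\alpha f\equiv 0$ on $W$ for every $|\alpha|+|\beta|<k$. This forces $W\subset\Xi_k$, a contradiction.

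\textbf{Main obstacle.} The only real subtlety is the induction above: at each step a $\partial_u$-order is traded for a $\partial_t$-order, and one must track carefully that the error terms produced by the $c_{ij}\partial_{u_j}$ components of the $v_i$ are precisely those derivatives already known to vanish on $W$ from the previous step, so that the chain of implications actually closes. Once this bookkeeping is set up cleanly, the passage from ``vanishing at a general smooth point of $W$'' to ``vanishing on all of $W$'' is automatic from the closedness of the vanishing locus and the irreducibility of $W$, and the remainder of the argument is formal.
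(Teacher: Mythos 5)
Your argument is correct. The paper itself simply cites \cite[Corollary 5.2.12]{lazarsfeld1}, so you are supplying a self-contained proof of what the paper treats as a black box; the derivative-propagation technique you use (trading $\partial_u$-orders for $\partial_t$-orders along tangent directions of a component dominating $T$) is in fact the standard mechanism behind the cited result, and it is also the same circle of ideas as Ein--K\"uchle--Lazarsfeld's Lemma \ref{familymult} used elsewhere in the paper. So this is not a genuinely different route so much as an unpacked one, but it is worth confirming the details.

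The two points you flag as subtle are exactly the ones that need care, and you handle them correctly. First, the conclusion $v_i(\partial_u^\alpha f)(y)=0$ a priori holds only at a single smooth submersive point $y$ of $W$; but such points form a dense open of $W$ (generic smoothness of $p|_W$ in characteristic zero, using that $W$ dominates $T$), and since each $\partial_t^\beta\partial_u^\alpha f$ is regular and $W$ is irreducible, vanishing on a dense subset upgrades to vanishing on all of $W$, which is what feeds the next step of the induction. Second, the induction is genuinely on $|\beta|$: at stage $|\beta|=m+1$ the error terms are $\partial_t^{\beta}\partial_u^{\alpha+e_j}f$ with $|\beta|=m$ and $|\alpha+e_j|+|\beta|<k$, so they are covered by the stage-$m$ hypothesis, and the bookkeeping closes. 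Combined with the easy containment $\Xi_k\subseteq\Psi_k$ and the finiteness of the relevant $k$ and of components, the proof is complete.

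One small remark on presentation: when you say ``$\Xi_k$ is cut out by $\{\partial_u^\alpha\partial_t^\beta f=0\}$,'' these equations only make sense in the chosen coordinate chart, but $\Xi_k$ and $\Psi_k$ are intrinsically defined closed subsets of $X$; the local description is used only to run the tangent-vector computation, and the global statement (no component of $\Psi_k\setminus\Xi_k$ dominates $T$) is chart-independent. You might spell this out so the reader does not worry about gluing the local loci. Also, the hypothesis that $p$ is dominant should be stated (otherwise $X_t=\emptyset$ for general $t$ and the statement is vacuous), which you implicitly do when you ``shrink $T$ so that $p$ is smooth.''
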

\begin{proof}
This is \cite[Corollary 5.2.12]{lazarsfeld1}.
\end{proof}

The following important result, due to Ein, K\"{u}chle and Lazarsfeld, describes multiplicities of a family of divisors in a fixed linear series, and will be a crucial in the 
proof of Theorem \ref{strongtigersintro}.

\begin{lemma}\label{familymult}
Let $X$ and $T$ be smooth irreducible varieties, with $T$ affine, and suppose that $Z\subseteq V\subseteq X\times T$ are irreducible
subvarieties such that $V$ dominates $X$. Let $L$ be a line bundle on $X$, and suppose given on $X\times T$ a divisor $E\in |\operatorname{pr}_{1}^*(L)|$.
Write $l=\operatorname{mult}_Z (E)$ and $k=\operatorname{mult}_V (E)$. Then there exists a divisor $E'\in |\operatorname{pr}_1 ^*(L)|$ on $X\times T$
having the property that $\operatorname{mult}_Z (E') \geqslant l-k$ and $V\not\subseteq \operatorname{Supp}(E')$.
\end{lemma}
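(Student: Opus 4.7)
The plan is to obtain $E'$ as the divisor of a section produced from $E$ by differentiating in the $T$-direction. The essential structural point is that $\operatorname{pr}_1^* L$ carries a canonical flat connection along the fibers of $\operatorname{pr}_2$ (being pulled back from $X$), so differentiation by $T$-vector fields preserves the linear equivalence class $|\operatorname{pr}_1^* L|$. Using that $T$ is affine, I would write $E = (\sigma)$ for a section $\sigma \in H^0(X \times T, \operatorname{pr}_1^* L)$ and, via the Künneth-type identification $H^0(X \times T, \operatorname{pr}_1^* L) \cong H^0(X, L) \otimes \mathcal{O}(T)$, express $\sigma$ as a finite sum $\sum_i s_i \boxtimes f_i$. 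Any derivation $\xi$ on $\mathcal{O}(T)$ then acts by $\xi \cdot \sigma := \sum_i s_i \boxtimes (\xi f_i)$, and iterating, any order-$a$ differential operator $D$ built from such derivations yields a new section $D\sigma \in H^0(X \times T, \operatorname{pr}_1^* L)$.

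Next I would record the standard estimates
\[
\operatorname{mult}_Z(D\sigma) \geqslant l - a \qquad \text{and} \qquad \operatorname{mult}_V(D\sigma) \geqslant k - a,
\]
which hold because each application of a single derivation drops the order of vanishing along any subvariety by at most one. Taking $a = k$, the first inequality gives the required $\operatorname{mult}_Z(D\sigma) \geqslant l - k$ automatically, so the problem reduces to finding an order-$k$ operator $D$ with $V \not\subseteq \operatorname{Supp}(D\sigma)$; then $E' := (D\sigma)$ meets both conclusions.

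The crucial geometric input is that $V$ dominates $X$ via $\operatorname{pr}_1$. At a general smooth point $p = (x, t) \in V$, generic smoothness of $\operatorname{pr}_1|_V$ forces the surjection $T_p V \twoheadrightarrow T_x X$, equivalently, the conormal space $N^*_{V / X \times T, p}$ injects into $T_t^* T$. Because $\operatorname{mult}_V \sigma = k$ exactly, the initial form $\bar{\sigma}$ of $\sigma$ along $V$ at $p$ is a nonzero element of $\operatorname{Sym}^k(N^*_{V / X \times T, p})$. Under the injection above, this initial form becomes a nonzero polynomial in $T$-cotangent directions, and the nondegenerate pairing with $\operatorname{Sym}^k(T_t T)$ supplies a constant-coefficient $T$-differential operator $D$ of order $k$ with $(D\sigma)(p) \neq 0$, which is exactly what we need.

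The main obstacle I anticipate is the bookkeeping required to make the differentiation intrinsic with respect to the line bundle: differentiating a section of an arbitrary line bundle is not canonical, and the entire argument hinges on the fact that $\operatorname{pr}_1^* L$ is flat in the $T$-direction. Verifying that the multiplicity estimates above hold independently of local trivializations, and that iterated derivations of the finite expansion $\sum_i s_i \boxtimes f_i$ (guaranteed by the affineness of $T$) really produce global sections of $\operatorname{pr}_1^* L$ rather than twisted objects, is where the argument is most delicate. The affineness hypothesis on $T$ is used in two places: once to get the Künneth decomposition of global sections, and once to promote local derivations to operators acting on all of $H^0(X \times T, \operatorname{pr}_1^* L)$.
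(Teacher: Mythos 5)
Your argument is correct and reproduces the differentiation-in-the-$T$-direction proof of Ein, K\"uchle and Lazarsfeld, which is precisely the source the paper delegates this lemma to (the paper's proof is just a citation to their Proposition~2.3). The only point worth spelling out is how the pointwise pairing between the initial form in $\operatorname{Sym}^k N^*_{V,p}$ and $\operatorname{Sym}^k T_tT$ lifts to a \emph{global} operator: since $T$ is smooth and affine the tangent sheaf $\mathcal{T}_T$ is globally generated, so degree-$k$ monomials in global vector fields on $T$ realize any prescribed principal symbol at $p$, and each such monomial is $\operatorname{pr}_1^{-1}\mathcal{O}_X$-linear, hence preserves $|\operatorname{pr}_1^*L|$.
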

\begin{proof}
See \cite[Proposition 2.3]{ein_lazarsfeld_ample}.
\end{proof}

\subsection{Multiplier ideals}

\begin{definition}
Let $(X,\Delta)$ be a log pair with $X$ smooth, and let $\mu : Y\rightarrow X$ be a log resolution. We define the multiplier ideal sheaf of $\Delta$ to be
\[
   \mathcal{I}(X,\Delta) = \mu _* \mathcal{O}_Y (K_{Y/X} - \lfloor \mu ^* \Delta \rfloor) \subseteq \mathcal{O}_X
\]
\end{definition}

We again quickly mention only the most relevant properties for us and refer the reader to \cite{lazarsfeld2} for a detailed treatment. 

Multiplier ideals can be used to detect if the log pair $(X,\Delta)$ is lc or klt. In fact $(X,\Delta)$ is klt if and only if $\mathcal{I}(X,\Delta)=\mathcal{O}_X$, and
is lc if and only if $\mathcal{I}(X, (1-\epsilon)\Delta)=\mathcal{O}_X$ for any $0<\epsilon\ll 1$. Therefore
$\operatorname{Nklt}(X,\Delta)=\operatorname{Supp}(\mathcal{O}_X / \mathcal{I}(X,\Delta))$.
Much of the importance of multiplier ideals is due to the following generalization of the Kawamata-Viehweg vanishing theorem.

\begin{theorem}[Nadel vanishing theorem]
Let $X$ be a smooth complex projective variety and $\Delta\geqslant 0$ a $\mathbb{Q}$-divisor on $X$. Let $L$ be any integral divisor such that $L-\Delta$ is big 
and nef. Then $H^i (X, \mathcal{O}_X (K_X + L)\otimes \mathcal{I}(X,\Delta))=0$ for $i>0$.
\end{theorem}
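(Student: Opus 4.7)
The plan is to reduce the statement to the Kawamata--Viehweg vanishing theorem applied on a log resolution, and then descend to $X$ via the Leray spectral sequence.

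First I would fix a log resolution $\mu:Y\to X$ of the pair $(X,\Delta)$, so that both $\mu^*\Delta$ and the exceptional locus are supported on a simple normal crossing divisor. By the definition of the multiplier ideal together with the projection formula, I get the identification
\[
\mathcal{O}_X(K_X+L)\otimes \mathcal{I}(X,\Delta) \;\cong\; \mu_*\,\mathcal{O}_Y\bigl(K_Y + \mu^*L - \lfloor \mu^*\Delta\rfloor\bigr).
\]
Using the Leray spectral sequence, the desired vanishing on $X$ would follow from two ingredients: (a) global vanishing of $H^i\bigl(Y,\mathcal{O}_Y(K_Y+\mu^*L-\lfloor \mu^*\Delta\rfloor)\bigr)$ for $i>0$; and (b) local vanishing $R^j\mu_*\mathcal{O}_Y(K_Y+\mu^*L-\lfloor \mu^*\Delta\rfloor)=0$ for $j>0$.

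For (a), I would write
\[
\mu^*L - \lfloor \mu^*\Delta\rfloor \;=\; \mu^*(L-\Delta) + \{\mu^*\Delta\},
\]
observing that $\mu^*(L-\Delta)$ is big and nef (bigness and nefness being preserved under birational pullback) while the fractional part $\{\mu^*\Delta\}$ has simple normal crossing support with coefficients in $[0,1)$. Thus the divisor on the right has the shape required for the Kawamata--Viehweg vanishing theorem in the form ``nef and big plus klt boundary'', which yields (a). For (b), by the projection formula it is enough to show $R^j\mu_*\mathcal{O}_Y(K_Y-\lfloor \mu^*\Delta\rfloor)=0$ for $j>0$; this is the local vanishing theorem, and I would deduce it from the relative Kawamata--Viehweg theorem applied to the $\mu$-numerically trivial divisor $-\mu^*\Delta$ (the fractional part again playing the role of a klt boundary).

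The main obstacle is really the local vanishing statement in (b): it is where the hypothesis that $\mu$ is a log resolution enters most crucially, and where one must be careful to package $-\lfloor\mu^*\Delta\rfloor$ as $-\mu^*\Delta+\{\mu^*\Delta\}$ in order to produce a relative boundary with SNC support and fractional coefficients so that the relative Kawamata--Viehweg theorem applies. Once (a) and (b) are in place, the Leray spectral sequence collapses on the $E_2$-page and gives $H^i\bigl(X,\mathcal{O}_X(K_X+L)\otimes \mathcal{I}(X,\Delta)\bigr)\cong H^i\bigl(Y,\mathcal{O}_Y(K_Y+\mu^*L-\lfloor \mu^*\Delta\rfloor)\bigr)=0$ for $i>0$, completing the proof.
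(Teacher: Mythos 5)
Your proof is correct and follows the standard argument in Lazarsfeld's \emph{Positivity in Algebraic Geometry II}, Section~9.4.B, which is exactly the reference the paper cites in lieu of giving a proof. The decomposition $\mu^*L - \lfloor \mu^*\Delta\rfloor = \mu^*(L-\Delta) + \{\mu^*\Delta\}$, the appeal to Kawamata--Viehweg vanishing on the log resolution, the local vanishing step via the projection formula, and the collapse of the Leray spectral sequence are all precisely as in that source.
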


For a proof of this theorem we refer to \cite[Section 9.4.B]{lazarsfeld2}.
Multiplicities and multiplier ideals are related by the following propositions.

\begin{proposition}\label{mult1}
Let  $X$ be an $n$-dimensional projective variety and let $\Delta$ be an effective $\mathbb{Q}$-divisor on $X$. If $\operatorname{mult}_x (\Delta)\geqslant n$ at some
smooth point $x\in X$, then $\mathcal{I}(X,\Delta)_x\subseteq \mathfrak{m}_x$, where $\mathfrak{m}_x$ is the maximal ideal of $x$.
\end{proposition}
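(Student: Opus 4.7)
The plan is to compute the multiplier ideal using a log resolution $\mu : Y \to X$ that factors through the blow-up of $X$ at $x$. Let $\pi : X' \to X$ denote this blow-up, with exceptional divisor $E \cong \mathbb{P}^{n-1}$, and let $\widetilde{E}$ denote the proper transform of $E$ on $Y$. The two standard facts I would use are:
\begin{enumerate}
\item The discrepancy of $E$ with respect to $X$ is $n-1$, so $\operatorname{ord}_{\widetilde{E}}(K_{Y/X}) = n-1$.
\item $\operatorname{ord}_{\widetilde{E}}(\mu^{*}\Delta) = \operatorname{mult}_x(\Delta)$, which by hypothesis is at least $n$.
\end{enumerate}

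Combining these gives $\operatorname{ord}_{\widetilde{E}}(K_{Y/X} - \lfloor \mu^{*}\Delta\rfloor) \leqslant (n-1) - n = -1$. Now I would translate this into a statement about sections: if $f \in \mathcal{O}_{X,x}$ is a local section of $\mathcal{I}(X,\Delta)$ near $x$, then by definition $\mu^{*}f$ is a local section of $\mathcal{O}_Y(K_{Y/X} - \lfloor \mu^{*}\Delta\rfloor)$, so $\operatorname{div}(\mu^{*}f) + K_{Y/X} - \lfloor \mu^{*}\Delta\rfloor \geqslant 0$ along every prime divisor of $Y$. Restricting this inequality to $\widetilde{E}$ forces $\operatorname{ord}_{\widetilde{E}}(\mu^{*}f) \geqslant 1$.

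To finish, I would observe that $\widetilde{E}$ dominates $E$ under $\nu : Y \to X'$, so $\operatorname{ord}_{\widetilde{E}}(\mu^{*}f) \geqslant 1$ is equivalent to $\operatorname{ord}_{E}(\pi^{*}f) \geqslant 1$; equivalently $\pi^{*}f$ vanishes along $E$, which is in turn equivalent to $f \in \mathfrak{m}_x$ since $\pi_{*}\mathcal{O}_{X'}(-E) = \mathfrak{m}_x$ (as $x$ is a smooth point). Therefore $\mathcal{I}(X,\Delta)_x \subseteq \mathfrak{m}_x$, as claimed.

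The argument is essentially routine and I do not expect a real obstacle; the one point that has to be handled with care is the independence of $\mathcal{I}(X,\Delta)$ from the chosen resolution, which justifies working with an arbitrary log resolution factoring through $\pi$, together with the verification that the coefficient computations for $K_{Y/X}$ and $\mu^{*}\Delta$ along $\widetilde{E}$ are unaffected by the intermediate blow-ups needed to make $\mu$ a log resolution (they only modify things supported over loci that meet $\widetilde{E}$ in proper subvarieties).
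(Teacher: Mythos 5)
Your proof is correct and reproduces the standard argument of \cite[Proposition 9.3.2]{lazarsfeld2}, which the paper cites without giving details: blow up the smooth point $x$, note the exceptional divisor has discrepancy $n-1$ while $\mu^{*}\Delta$ has coefficient $\geqslant n$ along it, and conclude that any local section of the multiplier ideal must vanish along that divisor, hence at $x$. Your final remark about the independence of the coefficient computations from the choice of log resolution factoring through the blow-up is precisely the point that makes the argument rigorous, and it is handled the same way in the reference.
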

\begin{proof}
This is \cite[Proposition 9.3.2]{lazarsfeld2}.
\end{proof}

\begin{proposition}\label{mult2}
Let  $X$ be an $n$-dimensional projective variety and let $\Delta$ be an effective $\mathbb{Q}$-divisor on $X$. 
If $\operatorname{mult}_x \Delta < 1$ at a smooth point $x\in X$ then $\mathcal{I}(X,\Delta)_x = \mathcal{O}_{X,x}$.
\end{proposition}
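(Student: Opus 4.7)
My plan is to reduce to showing that $(X,\Delta)$ is klt at $x$, which is equivalent to $\mathcal{I}(X,\Delta)_x=\mathcal{O}_{X,x}$ by the characterization of multiplier ideals recalled just before. Fix a log resolution $\mu:Y\to X$ of $(X,\Delta)$; by the defining formula the conclusion is equivalent to $K_{Y/X}-\lfloor\mu^*\Delta\rfloor$ being effective in a neighborhood of $\mu^{-1}(x)$, i.e. to the inequality
\[
\operatorname{ord}_E(\mu^*\Delta)\ <\ a(E,X)+1
\]
for every prime divisor $E\subseteq Y$ whose image under $\mu$ contains $x$.

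Non-exceptional divisors cause no trouble. If $E$ is the strict transform of an irreducible component $D$ of $\Delta$ through $x$ with coefficient $c$, then $a(E,X)=0$ and
\[
c\ \le\ c\cdot\operatorname{mult}_x(D)\ \le\ \operatorname{mult}_x(\Delta)\ <\ 1,
\]
so $\operatorname{ord}_E(\mu^*\Delta)=c<a(E,X)+1$.

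For exceptional divisors I would argue by induction on $n=\dim X$. The case $n=1$ is vacuous since a log resolution produces no exceptional divisors there. For the inductive step, by Bertini pick a general smooth hypersurface $H\subset X$ through $x$ that shares no component with $\operatorname{Supp}(\Delta)$. A local computation with leading forms shows $\operatorname{mult}_x(\Delta|_H)=\operatorname{mult}_x(\Delta)<1$, so by induction $(H,\Delta|_H)$ is klt at $x$. Applying inversion of adjunction for the smooth pair $(X,\Delta+H)$ along the smooth Cartier divisor $H$ then yields that $(X,\Delta+H)$ is plt in a neighborhood of $H$ near $x$; since for every exceptional divisor $E$ with $x\in c_E$ and $E\neq H$ one has $a(E,X,\Delta)=a(E,X,\Delta+H)+\operatorname{ord}_E(H)>-1$, this forces $(X,\Delta)$ itself to be klt at $x$.

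The principal obstacle is the exceptional case. The straightforward estimate $\operatorname{ord}_E(\mu^*\Delta)\le\operatorname{mult}_x(\Delta)$ fails in general, since iterating blow-ups over $x$ makes $\operatorname{ord}_E(\mu^*\Delta)$ grow, and one has to weigh that growth against the corresponding growth of $a(E,X)+1$. The inversion of adjunction step above finesses this bookkeeping at the cost of invoking a substantive theorem; a more elementary alternative is to fix a specific log resolution obtained by iterated blow-ups of smooth centers and track both quantities directly at each stage, using $a(E_{\mathrm{new}},X_{\mathrm{prev}})=\operatorname{codim}(\mathrm{center})-1$.
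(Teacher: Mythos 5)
Your proof is correct and takes essentially the same approach as the reference the paper cites for this statement (\cite[Proposition 9.5.13]{lazarsfeld2}): cut with a general smooth hypersurface $H$ through $x$, observe that the multiplicity at $x$ is unchanged, and induct on the dimension. Where you invoke Shokurov-type inversion of adjunction to pass the klt condition from $(H,\Delta|_H)$ back to $(X,\Delta)$, Lazarsfeld instead uses the restriction theorem for multiplier ideals, $\mathcal{I}(H,\Delta|_H)\subseteq\mathcal{I}(X,\Delta)\cdot\mathcal{O}_H$; in this setting these are two phrasings of the same fact, so the routes coincide.
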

\begin{proof}
See \cite[Proposition 9.5.13]{lazarsfeld2}.
\end{proof}

\begin{proposition}\label{mult3}
Let  $X$ be a smooth $n$-dimensional projective variety and let $\Delta$ be an effective $\mathbb{Q}$-divisor on $X$. 
Assume that $Z\subseteq X$ is a log canonical center of $\Delta$ of dimension $d$. Then the multiplicity of $\Delta$ at the generic point is at most $n-d$.
\end{proposition}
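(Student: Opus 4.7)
The plan is to extract the multiplicity bound from a single, well-chosen discrepancy computation, namely the one obtained by blowing up along $Z$ itself. Since the statement concerns only the behaviour at the generic point $\eta_Z$, I would first shrink $X$ to a Zariski open neighbourhood of a general point of $Z$, so that $Z$ may be assumed smooth of codimension $c = n-d$ in $X$.

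In this setting, let $\mu: X' \to X$ be the blow-up of $X$ along $Z$, with exceptional divisor $E$. Because both $X$ and $Z$ are smooth, the standard formula for the blow-up of a smooth centre gives $K_{X'/X} = (c-1)E$. On the other hand, the coefficient of $E$ in $\mu^{*}\Delta$ equals the generic multiplicity of $\Delta$ along $Z$, which is precisely $m := \operatorname{mult}_{\eta_Z}(\Delta)$. Combining these two facts yields
\[
a(E;X,\Delta) \;=\; (c-1) - m.
\]

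To conclude, I would invoke the log canonicity hypothesis. Since $Z$ is a log canonical centre of $\Delta$, the pair $(X,\Delta)$ is log canonical at $\eta_Z$, and therefore every divisorial valuation of $K(X)$ whose centre on $X$ lies over $\eta_Z$ must have discrepancy at least $-1$. Applying this to $E$ (whose centre is $Z$ itself) gives $(c-1)-m \geqslant -1$, that is $m \leqslant c = n-d$, which is exactly the claim.

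No serious obstacle is anticipated. The only point that requires a brief comment is that $\mu$ need not be a log resolution of $(X,\Delta)$, but this is harmless: the discrepancy of a divisorial valuation depends only on the valuation and not on the particular birational model on which a representative divisor is extracted, so it is legitimate to compute $a(E;X,\Delta)$ on $X'$. The entire argument thus boils down to the discrepancy calculation for a smooth blow-up followed by the definition of log canonicity at $\eta_Z$.
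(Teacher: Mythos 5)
Your argument is correct, and it takes a genuinely different route from the one in the paper. The paper's proof cuts $Z$ down to a point with $d$ general hyperplanes, uses inversion of adjunction to ensure the hyperplane sections remain log canonical with the cut-down centre still an lc centre, and then derives the multiplicity bound from Proposition~\ref{mult3}'s companion result on multiplier ideals, Proposition~\ref{mult1}: if $m > n-d$ one applies $\operatorname{mult}_x\bigl((1-\epsilon)\Delta\bigr)\geqslant n-d \Rightarrow \mathcal{I}\bigl((1-\epsilon)\Delta\bigr)_x\subseteq\mathfrak{m}_x$ to contradict log canonicity at the point. Your version instead reads off the bound from a single discrepancy computation: after shrinking to make $Z$ smooth of pure codimension $c=n-d$, blow up along $Z$ and use $K_{X'/X}=(c-1)E$ together with $\operatorname{ord}_E(\mu^*\Delta)=\operatorname{mult}_{\eta_Z}\Delta$, so that log canonicity at $\eta_Z$ forces $(c-1)-m\geqslant -1$, i.e.\ $m\leqslant c$. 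Your route sidesteps both inversion of adjunction and the multiplier-ideal criterion, at the modest cost of invoking the explicit smooth-blow-up discrepancy formula; it is therefore a bit more self-contained and localises the estimate in exactly one divisorial valuation. One small point worth making explicit in a final write-up: after shrinking $X$ to an open neighbourhood of $\eta_Z$ the variety is no longer projective, but this is harmless since both the discrepancy of $E$ and the hypothesis of log canonicity at $\eta_Z$ are local notions, unaffected by the shrinking.
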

\begin{proof}
After cutting $Z$ with hyperplanes, we may assume it is a point. One can then conclude by Proposition \ref{mult1} and inversion of adjunction.
\end{proof}

\subsection{Tie breaking}

Here we provide some useful lemmas that simplify the geometry of lc centers. We refer to each one of them, often without further specification, as \say{tie break}.

\begin{lemma}\label{tiebreak1}
Let $(X,\Delta)$ be a projective log pair with $\Delta$ a $\mathbb{Q}$-Cartier divisor. Assume that $x\in X$ is a kawamata log terminal point of $X$
and that $(X,\Delta)$ is log canonical near $x$.
If $W_1,W_2\in \operatorname{LLC}(X,\Delta,x)$ and $W$ is an irreducible component of $W_1 \cap W_2$ containing $x$, then $W\in \operatorname{LLC}(X,\Delta,x)$. Therefore,
if $(X,\Delta)$ is not klt at $x$, $\operatorname{LLC}(X,\Delta,x)$ has a unique minimal irreducible element, say $V$. Moreover, there exists an effective $\mathbb{Q}$-divisor
$E$ such that $(1-\epsilon)\Delta + \epsilon E$ is lc at $x$ and 
\[
\operatorname{LLC}(X,(1-\epsilon)\Delta + \epsilon E,x)=\{V\}
\]
for all $0<\epsilon\ll 1$. We may also assume that there is a unique log canonical place laying above $V$, and if $x\in X$ is general and $L$ is big divisor, 
then one can take $E\sim_\mathbb{Q} aL$ for some rational number $a$.
\end{lemma}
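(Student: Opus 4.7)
The plan is to handle the three claims sequentially: that intersections of lc centers through $x$ are lc centers, the existence of a unique minimal $V$, and the tie-break that isolates $V$ and a single log canonical place over it.

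For the intersection statement, I would follow the standard Kawamata-style argument. Given lc centers $W_1, W_2$ through $x$ with $X$ klt at $x$, pass to a common log resolution $\mu : Y \to X$ of $(X, \Delta)$ with simple normal crossings support, and let $E_i$ be a log canonical place with $\mu(E_i) = W_i$. A small general perturbation $\epsilon H$ with $H$ ample and highly singular along $W_1 \cup W_2$ brings both $W_1$ and $W_2$ into the non-klt locus simultaneously. The Koll\'ar--Shokurov connectedness theorem, applied on a Stein neighborhood of $x$, forces this non-klt locus to be connected, so combined with inversion of adjunction along $W_1$, any irreducible component $W$ of $W_1 \cap W_2$ through $x$ must be the image of a log canonical place. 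A further perturbation, of the type used for the main tie-break below, upgrades $W$ to an lc center of $\Delta$ itself.

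Once intersections are lc centers, the finite set $\operatorname{LLC}(X, \Delta, x)$ is closed under taking irreducible components of pairwise intersections through $x$, and hence has a unique minimal element $V$. For the main tie-break, enumerate the finitely many other lc centers $W_1, \dots, W_r$ through $x$; by minimality none of them contains $V$, since otherwise $W_i \cap V$ would yield a strictly smaller lc center. For each $i$ pick an effective divisor $F_i$ whose support contains $W_i$ but not $V$, set $E_0 = \sum F_i$, and consider $(X, (1-\epsilon)\Delta + \epsilon c E_0)$ for a suitable coefficient $c$. A discrepancy computation on a common log resolution shows that for small $\epsilon$ the discrepancy along places over $V$ is unchanged to leading order, while those over each $W_i$ strictly decrease past $-1$; after rescaling the total coefficient back to log canonical, we obtain a pair lc at $x$ whose only lc center through $x$ is $V$. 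Uniqueness of the log canonical place over $V$ is obtained by repeating the same trick, using a divisor that contains the image of any secondary log canonical place over $V$ but not $V$ itself.

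For the final refinement with $L$ big and $x$ general, bigness lets us write some multiple $mL$ as an ample plus effective divisor; for $x$ outside the augmented base locus of $L$ together with the (countably many) images of exceptional centers to be avoided, each auxiliary $F_i$ can be realized as an effective $\mathbb{Q}$-divisor in a multiple of $|L|$ with the prescribed support properties, yielding $E \sim_\mathbb{Q} aL$. The main obstacle is the first assertion on intersections of lc centers: one has to ensure that after perturbation the relevant valuation remains a genuine log canonical place, with discrepancy exactly $-1$, rather than a non-klt place of strictly smaller discrepancy. This is the role of the klt hypothesis on $X$ and of the careful rescaling of the perturbation parameter, combined with the connectedness lemma on a small neighborhood of $x$.
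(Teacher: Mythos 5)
The paper does not actually spell out a proof here; it cites Ambro (Lemma~3.4 of \emph{Ladders on Fano varieties}), where the intersection statement is obtained via Kawamata's subadjunction for lc centers rather than the connectedness lemma alone. Your sketch for the intersection claim is therefore gesturing at the right circle of ideas, but the connectedness theorem by itself does not produce a log canonical place centered at a component of $W_1\cap W_2$; one needs to restrict the boundary structure to (a tie-broken, exceptional) $W_1$ via subadjunction and argue there. That is the step your outline elides.

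The more serious problem is in your tie-break. You claim that, by minimality of $V$, none of the other lc centers $W_i$ through $x$ contains $V$. The implication runs the other way: if $V\not\subseteq W_i$, then by the intersection statement some irreducible component of $W_i\cap V$ through $x$ is an lc center strictly contained in $V$, contradicting minimality. Hence \emph{every} lc center through $x$ contains $V$. This kills your construction: there is no effective divisor $F_i$ whose support contains $W_i$ but not $V$, since $V\subseteq W_i$. The perturbation also points in the wrong direction. You want to raise the discrepancies over the $W_i\supsetneq V$ strictly above $-1$ while pinning those over $V$ at $-1$; that is achieved by taking $E$ singular \emph{along $V$} (not along the $W_i$), scaling $\Delta$ down to $(1-\epsilon)\Delta$, and choosing the coefficient of $E$ at the lc threshold at $x$. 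Adding a divisor singular along $W_i$ as you propose would push the pair past log canonical along $W_i\ni x$, so it would no longer be lc at $x$. The same issue infects your argument for uniqueness of the lc place: two log canonical places over $V$ both have center $V$, so you cannot separate them by a divisor ``containing the image of one but not the other''; uniqueness is arranged on a log resolution by perturbing with small multiples of exceptional divisors, again at the appropriate threshold.
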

\begin{proof}
See \cite[Lemma 3.4]{ambro}.
\end{proof}

We will often need to keep track of two points $x$ and $y\in X$. The following lemma is an immediate consequence of the above.

\begin{lemma}\label{tiebreak2}
Let $(X,\Delta)$ be a projective log pair with $\Delta$ a $\mathbb{Q}$-Cartier divisor. Assume that $x$ and $y\in X$ are kawamata log terminal points of $X$,
and that $(X,\Delta)$ is lc but not klt at $x$ and not lc at $y$.
Consider the minimal irreducible lc center $V$ at $x$. There exists an effective $\mathbb{Q}$-divisor $E$ such that
\[
\operatorname{LLC}(X,(1-\epsilon)\Delta + \epsilon E,x)= \{V\}
\] 
and $(1-\epsilon)\Delta + \epsilon E$ is lc at $x$ but not lc at $y$ for all $0<\epsilon\ll 1$.

Furthermore, if $L$ is a big divisor there is a non-empty open $U\subseteq X$ such that for every $x$ and $y\in U$ one can take $E\sim_\mathbb{Q} aL$ for some rational number $a$.
\end{lemma}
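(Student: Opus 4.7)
The plan is to reduce the statement directly to Lemma \ref{tiebreak1}, where the only real thing to check is that the perturbation introduced to isolate the minimal lc center at $x$ is small enough not to restore the lc property at $y$. Since $x$ is a klt point of $X$ and $(X,\Delta)$ is lc but not klt at $x$, Lemma \ref{tiebreak1} applies and yields an effective $\mathbb{Q}$-divisor $E$ such that, for all $0<\epsilon\ll 1$, the pair $(X,(1-\epsilon)\Delta+\epsilon E)$ is lc at $x$ and has unique minimal lc center $V$. This handles both the equality $\operatorname{LLC}(X,(1-\epsilon)\Delta+\epsilon E,x)=\{V\}$ and the lc condition at $x$.

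To verify the non-lc condition at $y$, set $c=\operatorname{lct}_y(X,\Delta)$. The hypothesis that $(X,\Delta)$ is not lc at $y$ forces $c<1$, so for every $0<\epsilon<1-c$ we have $1-\epsilon>c$ and therefore $(X,(1-\epsilon)\Delta)$ is not lc at $y$. Since $E$ is effective, pulling $\epsilon E$ back to any log resolution contributes a non-negative term, so adding $\epsilon E$ to the boundary can only decrease discrepancies; it follows that the perturbed pair $(X,(1-\epsilon)\Delta+\epsilon E)$ remains not lc at $y$. Taking $\epsilon$ smaller than both $1-c$ and the threshold furnished by Lemma \ref{tiebreak1} secures both properties simultaneously.

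For the final clause, the last part of Lemma \ref{tiebreak1} guarantees $E\sim_\mathbb{Q} aL$ whenever $x$ is a general point of $X$ and $L$ is big. Accordingly, take $U\subseteq X$ to be the non-empty open subset of klt points over which this generality hypothesis holds; the construction above then produces the desired $E\sim_\mathbb{Q} aL$ for every pair $x,y\in U$ satisfying the hypotheses of the lemma. The main (and essentially only) technical point in the whole argument is the discrepancy-monotonicity observation in the second paragraph, which is exactly what lets us upgrade Lemma \ref{tiebreak1} into a statement that tracks the second point $y$.
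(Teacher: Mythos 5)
Your proof is correct and fills in precisely what the paper leaves implicit when it calls Lemma \ref{tiebreak2} an ``immediate consequence of the above.'' The two key observations you supply are exactly the right ones: (i) Lemma \ref{tiebreak1} applies because the hypotheses at $x$ are local and unaffected by the behavior at $y$; and (ii) since $\operatorname{lct}_y(X,\Delta)<1$, the pair $(X,(1-\epsilon)\Delta)$ remains non-lc at $y$ for $\epsilon$ small, and adding the effective divisor $\epsilon E$ to the boundary can only decrease discrepancies of any fixed valuation over $y$, so non-lc at $y$ is preserved. Shrinking $\epsilon$ to satisfy simultaneously the threshold from Lemma \ref{tiebreak1} and $\epsilon<1-\operatorname{lct}_y(X,\Delta)$ closes the argument, and the ``$E\sim_\mathbb{Q} aL$'' clause inherits directly from the last sentence of Lemma \ref{tiebreak1} (indeed your argument shows only $x$ need be taken in the distinguished open set; requiring $y\in U$ as well, as the statement does, is harmless).
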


When $\Delta$ is lc at both $x$ and $y$, we get a more delicate version.

\begin{lemma}\label{tiebreak3}
Let $(X,\Delta)$ be a projective log pair with $\Delta$ a $\mathbb{Q}$-Cartier divisor. Assume that $x$ and $y\in X$ are kawamata log terminal points of $X$,
and that $(X,\Delta)$ is lc but not klt at $x$ and $y\in X$. Consider the minimal irreducible lc centers $V$ at $x$ and $W$ at $y$.
After possibly switching $x$ and $y$, there exists an effective $\mathbb{Q}$-divisor $E$ such that for all $0<\epsilon\ll 1$ we have that
$(1-\epsilon)\Delta + \epsilon E$ is lc at $x$, not klt at $y$, $\operatorname{LLC}(X,(1-\epsilon)\Delta + \epsilon E,x)= \{V'\}$, with $V'\subseteq V$ irreducible, and 
$\operatorname{Nklt}(X,(1-\epsilon)\Delta + \epsilon E,y)\subseteq \operatorname W$.

Furthermore, if $L$ is a big divisor there is a non-empty open $U\subseteq X$ such that for every $x$ and $y\in U$ one can take $E\sim_\mathbb{Q} aL$ for some rational number $a$.
\end{lemma}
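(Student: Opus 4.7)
The plan is to imitate the proof of Lemma \ref{tiebreak1} but to impose vanishing conditions at both $x$ and $y$ simultaneously, producing a single divisor $E$ as a sum $E_x + E_y$, where $E_x$ tie-breaks at $x$ while $E_y$ preserves non-klt behavior at $y$ with the non-klt locus confined to $W$.

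First I would apply Lemma \ref{tiebreak1} at $x$ to obtain an effective $\mathbb{Q}$-divisor $E_x \sim_{\mathbb{Q}} a_x L$ such that $(X,(1-\epsilon)\Delta + \epsilon E_x)$ is lc at $x$ with $\operatorname{LLC} = \{V\}$ for all sufficiently small $\epsilon > 0$, with a unique log canonical place over $V$. After shrinking $U$, the genericity of the construction lets us arrange $\operatorname{Supp}(E_x)$ to avoid $W$, and in particular $y \notin \operatorname{Supp}(E_x)$; this handles the structure at $x$ but typically turns the perturbed pair klt at $y$, because $\operatorname{lct}_y((1-\epsilon)\Delta) = 1/(1-\epsilon) > 1$.

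To repair this, fix a common log resolution $\mu : Y \to X$ of $(X, \Delta + E_x)$ and let $F_W$ denote a log canonical place of $\Delta$ over $W$. Using that $L$ is big, via Lemma \ref{conditions} and Lemma \ref{volumesections}, I would produce an effective $\mathbb{Q}$-divisor $E_y \sim_{\mathbb{Q}} a_y L$ generic in the linear subsystem satisfying $\operatorname{ord}_{F_W}(\mu^* E_y) \geq \operatorname{ord}_{F_W}(\mu^* \Delta)$, while $\operatorname{Supp}(E_y)$ avoids $V$ (hence $x$) and the strict transforms of any non-minimal lc centers through $y$. The key discrepancy identity
\[
a\bigl(F_W,\, (1-\epsilon)\Delta + \epsilon E_y\bigr) = -1 + \epsilon\bigl(\operatorname{ord}_{F_W}(\mu^* \Delta) - \operatorname{ord}_{F_W}(\mu^* E_y)\bigr) \leq -1
\]
then shows that $F_W$ remains a non-klt place after the perturbation, so $W$ remains a non-klt center at $y$.

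Setting $E := E_x + E_y$, verification proceeds on $\mu$: near $x$ the support of $E_y$ is disjoint from $V$, so $(X,(1-\epsilon)\Delta + \epsilon E)$ locally coincides with $(X,(1-\epsilon)\Delta + \epsilon E_x)$ and is lc at $x$ with $\operatorname{LLC} = \{V\}$, giving $V' = V$; near $y$ the support of $E_x$ is disjoint from $W$, so the pair coincides with $(X,(1-\epsilon)\Delta + \epsilon E_y)$, and genericity of $E_y$ makes every non-minimal lc place of $\Delta$ over $y$ klt for small $\epsilon$, while continuity of discrepancies in $\epsilon$ forbids new non-klt places, yielding $\operatorname{Nklt} \subseteq W$. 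The main obstacle is the case when $V$ and $W$ meet nontrivially, in particular when $V \subseteq W$ or $W \subseteq V$: then forcing $E_y$ to be singular along $W$ necessarily damages the lc structure at $x$, and the separation-of-supports argument breaks down. This is precisely what the "after possibly switching $x$ and $y$" clause absorbs, since for any such configuration the symmetric construction (with the roles of $x$ and $y$ exchanged) succeeds, and the final open-set assertion is immediate from the fact that every vanishing condition imposed on $E_x$, $E_y$ is realizable for $(x,y)$ in a nonempty Zariski open subset of $X \times X$.
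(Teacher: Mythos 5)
Your plan decomposes $E$ as $E_x + E_y$ with disjoint supports, handling the lc structure at each point separately; this works smoothly when the two minimal centers can be separated, and your discrepancy identity for $F_W$ is correct. The fatal gap is in the degenerate case. You flag that when $V\subseteq W$ or $W\subseteq V$ the separation-of-supports argument breaks, and you assert that the ``after possibly switching'' clause absorbs this because the symmetric construction succeeds. That is false precisely when $V = W$: the configuration is invariant under swapping $x$ and $y$, so switching buys nothing, and there is no way to produce an $E_y$ highly singular along $W = V$ whose support avoids $V$. Note also that if $x\in W$, minimality forces $V\subseteq W$, and if additionally $y\in V$ then $V = W$; so the only genuinely troublesome configuration is $V = W$, and your swap trick handles everything \emph{except} the one case it was invoked for.

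The paper (following Takayama) handles $V = W$ by a different mechanism, which you have no analogue of. It takes a general member $B$ of $|\mathcal{O}_X(mH)\otimes\mathcal{I}_W|$, slightly rescales $\Delta$ by $(1-\delta)$ so the pair is klt off $W$ near $x$ and $y$, then chooses the threshold $c$ for which $(1-\delta)\Delta + (c/m)B$ is lc at one of the points and non-klt at the other. If the resulting non-klt locus at the lc point is all of $W=V$, one is done; otherwise the minimal lc center has strictly dropped in dimension and one recurses. This is also why the lemma only asserts $V'\subseteq V$ rather than $V' = V$: your construction forces $V' = V$, which is a stronger conclusion than the statement and a signal that the recursive case has been skipped. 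Without the threshold-and-induction step, the proof does not go through when $V = W$.

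A secondary, more minor issue: your construction of $E_y$ requires $\operatorname{ord}_{F_W}(\mu^*E_y)\geqslant\operatorname{ord}_{F_W}(\mu^*\Delta)$ for a possibly deep exceptional place $F_W$, which demands control of orders of vanishing of general members of a linear subsystem along a fixed exceptional divisor; you invoke Lemmas \ref{conditions} and \ref{volumesections} but these give multiplicity estimates at smooth points, not valuations along higher codimension centers, so some additional argument would be needed. The paper sidesteps this entirely by adding a divisor through $W$ that makes the pair \emph{non-lc} along $W$ and then applying Lemma \ref{tiebreak2}, rather than trying to keep $F_W$ at exactly the lc threshold.
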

\begin{proof}
This is essentially \cite[Lemma 5.5]{takayama}. For the sake of clarity however, we carry out Takayama's argument here as well.
Fix an ample divisor $A$ on $X$ (or, if we are provided with $L$ big, take $A$ coming from a decomposition $L\sim_\mathbb{Q} A + E'$ with $A$ ample and $E'$ 
effective, and take $U=X\setminus \text{Supp}(E'))$. Let $H\sim_\mathbb{Q} \epsilon A$ be an ample $\mathbb{Q}$-divisor on $X$. 
Let $m$ be a large positive integer such that
$mH$ is integral and the sheaf $\mathcal{O}_X (mH)\otimes \mathcal{I}_W$ is globally generated, where $\mathcal{I}_W\subseteq \mathcal{O}_X$ is the 
ideal sheaf of $W$ with the reduced scheme structure. Let $B\in |\mathcal{O}_X (mH)\otimes \mathcal{I}_W|$ be a general member. 

\textbf{Case 1}: $x\notin W$. 
Clearly $(X,\Delta + (1/m)B)$ is lc and not klt at $x$ but not lc at $y$. So we may now apply Lemma \ref{tiebreak2}. Notice that if we are given $L$, then
we may apply Lemma \ref{tiebreak2} to $(X,\Delta+(1/m)B + \epsilon E')$ since $x$ and $y\in U$.

\textbf{Case 2}: $x\in W$. By minimality of $V$ we have that $V\subseteq W$. If $V\neq W$ we reduce ourselves to Case 1 by switching $x$ and $y$.
Assume then that $V=W$ and pick $\delta \ll 1/m$. Then $(X, (1-\delta) \Delta + (1/m)B)$ is klt outside $W$ in a neighborhood of $x$ and $y$, and is not lc
along $W$. Now choose a rational number $0<c<1$ such that $(X, (1-\delta) \Delta + (c/m)B)$ is lc at one of $x$ and $y$ and not klt at the other.
Since we are allowing the possibility of switching $x$ and $y$, without loss of generality we may assume that $(X, (1-\delta) \Delta + (c/m)B)$ is lc but not klt at $x$
and not klt at $y$. If $\operatorname{Nklt}(X,(1-\delta) \Delta + (c/m)B,x)=W=V$ we are done. Suppose then that $\operatorname{Nklt}(X,(1-\delta) \Delta + (c/m)B,x)$ is a proper subset
of $W$. Notice then that we are again in the hypothesis of the lemma, but with $\operatorname{dim}(V')$,  $\operatorname{dim}(W')<\operatorname{dim}(V)=\operatorname{dim}(W)$. 
One then concludes by induction.
\end{proof}

\subsection{Lifting pluricanonical sections}

In this subsection we introduce the techniques we need to lift pluricanonical sections. First, we address the case of fibrations.

\begin{theorem}\label{liftfibration}
Let $\mathcal{X}$ be a birationally bounded family. Let $f:X\rightarrow T$ be a morphism with connected fibers from a nonsingular projective variety $X$
onto a smooth complete curve $T$.
Assume that the general fiber $F$ of $f$ is birationally equivalent to an element of $\mathcal{X}$. Then there exists a constant $c (\mathcal{X})>0$ 
such that, whenever $\operatorname{vol}(X)>c(\mathcal{X})$, the restriction map
\[
H^0(X, mK_X)\rightarrow H^0(F_1, mK_{F_1}) \oplus H^0 (F_2, mK_{F_2})
\]
is surjective for any two general fibers $F_1$ and $F_2$ and for all $m\geqslant 2$.
\end{theorem}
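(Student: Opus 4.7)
The plan is to derive the surjectivity from the standard lifting short exact sequence together with Nadel vanishing, where the bounded-family hypothesis provides the volume slack needed to construct a suitable multiplier ideal. First I would note that since $F_i = f^{-1}(t_i)$ is a general fiber of a morphism to a smooth curve, $F_i$ is a smooth Cartier divisor with trivial normal bundle, so by adjunction $mK_X|_{F_i} \cong mK_{F_i}$. Because the two fibers are disjoint, $H^0(F_1 + F_2,\,mK_X) = H^0(F_1, mK_{F_1}) \oplus H^0(F_2, mK_{F_2})$, and the short exact sequence
\begin{equation*}
0 \to \mathcal{O}_X(mK_X - F_1 - F_2) \to \mathcal{O}_X(mK_X) \to \mathcal{O}_{F_1 + F_2}(mK_X) \to 0
\end{equation*}
reduces the surjectivity to the vanishing $H^1(X,\,mK_X - F_1 - F_2) = 0$.

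To apply Nadel vanishing I will construct a $\mathbb{Q}$-divisor $\Delta \geqslant 0$ on $X$ such that $(X,\Delta)$ is klt in a Zariski neighborhood of $F_1 \cup F_2$ and $L - \Delta$ is big and nef, where $L := (m-1)K_X - F_1 - F_2$. The key input is producing divisors with prescribed multiplicity along $F_i$: using the triviality of the normal bundle, the number of conditions on a section of $|NK_X|$ to vanish to order $\geqslant k$ along $F_i$ equals $k \cdot h^0(F_i, NK_{F_i})$, and the birational boundedness of $\mathcal{X}$ gives a uniform bound $h^0(F_i, NK_{F_i}) \leqslant C(\mathcal{X})\, N^{n-1}/(n-1)!$ for $N$ large. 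Meanwhile Lemma \ref{volumesections} yields $h^0(X, NK_X) \geqslant (\operatorname{vol}(X) - \varepsilon)\, N^n / n!$. Choosing $k \approx N/(m-1-\delta)$ for small $\delta > 0$ and setting $c(\mathcal{X})$ to be a suitable multiple of $n\,C(\mathcal{X})$ (so that the counting estimate is beaten already in the worst case $m = 2$), I would produce a general $D \in |NK_X|$ with $\operatorname{mult}_{F_i}(D) \geqslant k$ for $i = 1, 2$.

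Define $\Delta_0 := ((m-1-\delta)/N)\,D - (F_1 + F_2)$, effective by the multiplicity bound. Then $L - \Delta_0 \sim_{\mathbb{Q}} \delta K_X$, which is big but not a priori nef. To remedy this I would apply Kodaira's lemma to write $\delta K_X \sim_{\mathbb{Q}} A + E_0$ with $A$ ample and $E_0 \geqslant 0$, arranging $E_0$ to have multiplicity zero along the general fibers $F_i$ (which is possible because such fibers are not contained in the stable base locus of $K_X$), and absorb $E_0$ into $\Delta$. The resulting $\Delta := \Delta_0 + E_0$ satisfies $L - \Delta \sim_{\mathbb{Q}} A$ ample, and $(X,\Delta)$ is klt near $F_1 \cup F_2$ because the coefficients of all components of $\Delta_0$ are of order $O(1/N)$ (a Bertini-type argument for the general $D$ in the prescribed subseries controls the singularities) and the contribution $\delta E_0$ is likewise controlled by $\delta$. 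Nadel then gives $H^1(X, \mathcal{O}_X(mK_X - F_1 - F_2) \otimes \mathcal{I}(\Delta)) = 0$, and since $\mathcal{I}(\Delta) = \mathcal{O}_X$ near $F_1 \cup F_2$, restriction to $F_1 + F_2$ factors through this twisted group and the original restriction map is surjective.

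The main obstacle I anticipate is coordinating bigness-and-nefness with the klt condition: because $K_X$ need not itself be nef, a naive choice of $\Delta$ yields only bigness, and the Kodaira-decomposition trick introduces an auxiliary effective divisor that must be placed away from $F_1, F_2$. Verifying that general fibers escape the stable base locus of $K_X$, together with careful bookkeeping of the implicit constants from Lemma \ref{volumesections} and from the uniform bound on $h^0(F, NK_F)$ across the family $\mathcal{X}$, is the technical core of the argument and pins down the explicit value of $c(\mathcal{X})$.
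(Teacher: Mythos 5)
The paper does not prove this theorem at all: it is quoted directly from Chen--Jiang (the proof in the text is the single line ``This is \cite[Theorem 1.2]{chen2}''). Your proposal is therefore an independent attempt, and the natural question is whether it stands on its own. It does not quite: the reduction to $H^1(X,\mathcal{O}_X(mK_X-F_1-F_2)\otimes\mathcal{I}(\Delta))=0$ together with triviality of $\mathcal{I}(\Delta)$ near $F_1\cup F_2$ is sound, and the dimension count you set up (number of conditions bounded by $k\bigl(h^0(F_1,NK_{F_1})+h^0(F_2,NK_{F_2})\bigr)$, both of order $N^{n-1}$, against $h^0(X,NK_X)$ of order $\operatorname{vol}(X)\,N^n/n!$) does produce a nonzero subspace of sections with $\operatorname{ord}_{F_i}\geqslant k$ once $\operatorname{vol}(X)$ exceeds a constant of the shape $2nC(\mathcal{X})$. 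The gap is in going from this subspace to a klt pair.

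Specifically, two things are asserted but not established. First, you set $\Delta_0=\frac{m-1-\delta}{N}D-(F_1+F_2)$ and need the coefficient of $F_i$ in $\Delta_0$, namely $\frac{(m-1-\delta)}{N}\operatorname{ord}_{F_i}(D)-1$, to lie in $[0,1)$. The dimension count only gives $\operatorname{ord}_{F_i}(D)\geqslant k\approx N/(m-1-\delta)$; it gives no upper bound. If every section in the subspace happens to have $\operatorname{ord}_{F_1}(D)\geqslant 2N/(m-1-\delta)$, the coefficient of $F_1$ is $\geqslant 1$, $\mathcal{I}(\Delta)$ is contained in $\mathcal{I}_{F_1}$ near a general point of $F_1$, and the restriction argument collapses. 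You would need a refined filtration argument (tracking the jumps $\dim V^{(i)}_j-\dim V^{(i)}_{j+1}\leqslant h^0(F_i,NK_{F_i})$) to locate an order that is both $\geqslant k$ and within a factor of $2$ of $k$, and even that requires care because the two-fiber filtration is two-dimensional. Second, the phrase ``a Bertini-type argument for the general $D$ in the prescribed subseries controls the singularities'' does not apply where you need it. Bertini controls the general member away from the base locus of the linear system, and here $F_1\cup F_2$ sits inside that base locus with multiplicity $\geqslant k$. Even after stripping off $k_iF_i$, the residual $D'=D-k_1F_1-k_2F_2$ moves in a linear series whose restriction to $F_i$ can have large base locus inside $F_i$; nothing you have written bounds $\operatorname{mult}_Z(D')$ for subvarieties $Z\subsetneq F_i$, so $(X,\Delta_0)$ may fail to be klt along $F_i$. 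Both problems arise precisely because the divisors you construct are forced to be very singular along $F_1\cup F_2$, the very locus where you need kltness. Chen--Jiang's argument avoids this by working with positivity of the direct image $f_*\omega_{X/T}^{m}$ rather than by constructing a multiplier ideal on $X$ through multiplicity conditions along the fibers; if you want to make your route work, the filtration/upper-bound issue and the singularity-of-the-residual issue are the two points that must be resolved.
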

\begin{proof}
This is \cite[Theorem 1.2]{chen2}.
\end{proof}

The following lemma is useful in cutting down log canonical centers.

\begin{lemma}\label{serrelift}
Let $X$ be a smooth projective variety and $D$ a big divisor whose graded ring of sections is finitely generated. There is an open subset $U\subseteq X$ with the following property.
Let $\Delta\sim_\mathbb{Q} \lambda D$ be an effective $\mathbb{Q}$-divisor that has an lc center $V$. Let $\Theta\sim_\mathbb{Q} \mu D_{|_V}$ be
any effective $\mathbb{Q}$-divisor. Then there is an effective $\mathbb{Q}$-divisor $\Delta '\sim_\mathbb{Q} \lambda' D$ with 
$\lambda'\leqslant \lambda + \mu$ and an open set $U'\subseteq V$ such that for every $V'\in LLC(V,\Theta)$ with $V'\cap U'\neq \emptyset$, $V'$ is also an lc center of $\Delta'$. Furthermore, 
if $\Delta$ is not klt at some $y\notin V$, then we may choose $\Delta'$ not klt at $y$ either.
\end{lemma}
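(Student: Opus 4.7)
The plan is to lift $\Theta$ from $V$ to an effective $\mathbb{Q}$-divisor $\tilde\Theta$ on $X$ using the finite generation of the section ring of $D$, and then to identify lc centers of $(V,\Theta)$ on a suitable open set of $V$ with lc centers of the lifted divisor on $X$ via the exceptional lc center structure of $V$. Since $D$ has a finitely generated section ring, there is a birational map $g\colon X\dashrightarrow X_0$ to the ample model, with $D\sim_{\mathbb{Q}} g^{*}D_0$ for an ample $\mathbb{Q}$-Cartier divisor $D_0$ on a normal projective variety $X_0$; equivalently, $H^{0}(X,mD)\cong H^{0}(X_0,mD_0)$ for all sufficiently divisible $m$. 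Take $U\subseteq X$ to be the largest open subset on which $g$ is defined and restricts to an isomorphism onto its image. This $U$ depends only on $X$ and $D$ and will serve as the open set of the statement.

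Given $\Delta$, $\Theta$ and $V$ as in the hypothesis, first apply Lemma \ref{tiebreak1} to $(X,\Delta)$ with a perturbation of the form $aD$, $a$ rational and arbitrarily small, so that $V$ becomes the unique minimal lc center with a unique lc place lying over it; the resulting increase in the coefficient of $D$ is arbitrarily small and will be absorbed later. Let $V_0=\overline{g(V\cap U)}\subseteq X_0$ and choose $k$ large and divisible so that $k\mu\in\mathbb{Z}$, $k\Theta$ is a genuine Cartier divisor on $V$, and by Serre vanishing applied to the ample line bundle $\mathcal{O}_{X_0}(k\mu D_0)$ the restriction
\[
H^{0}(X_0,\,k\mu D_0)\;\twoheadrightarrow\; H^{0}(V_0,\,k\mu D_0|_{V_0})
\]
is surjective, coming from the vanishing of $H^{1}(X_0,\mathcal{I}_{V_0}(k\mu D_0))$. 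Transport $k\Theta$ across the isomorphism $V\cap U\to V_0\cap g(U)$, extend to a section of $H^{0}(V_0,k\mu D_0|_{V_0})$, lift to $X_0$, pull back under $g$, and divide by $k$, producing an effective $\mathbb{Q}$-divisor $\tilde\Theta\sim_{\mathbb{Q}}\mu D$ on $X$ whose restriction to $V$ agrees with $\Theta$ on an open subset $U'\subseteq V\cap U$ (shrinking $U'$ to avoid any extra components the lift picks up).

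Set $\Delta'=\Delta+\tilde\Theta\sim_{\mathbb{Q}}\lambda' D$ with $\lambda'\leq\lambda+\mu$, after a final harmless tie-break of the form $aD$ with $a$ small if needed to stay within the bound. To verify the lc center condition, fix a common log resolution $p\colon Y\to X$ of $\Delta$, $\tilde\Theta$ and $V$, and let $E_V\subseteq Y$ be the unique prime divisor of discrepancy $-1$ for $(X,\Delta)$ whose image is $V$. Over $V\cap U'$ the map $p|_{E_V}\colon E_V\to V$ is generically a resolution, and Kawamata's form of inversion of adjunction identifies lc places of $(V,\Theta)$ lying over any $V'$ with lc places of $(X,\Delta')$ lying over $V'$, using that $\tilde\Theta|_{U'}=\Theta|_{U'}$ and that $E_V$ is not contained in the support of $p^{*}\tilde\Theta$. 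Hence every $V'\in\operatorname{LLC}(V,\Theta)$ meeting $U'$ is an lc center of $\Delta'$. The furthermore clause is immediate since $\Delta'\geq\Delta$ in a neighborhood of any $y\notin V$, with Lemma \ref{tiebreak2} available if a further minor adjustment is needed.

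The main obstacle is the lifting step: producing a divisor on $X$ whose restriction to $V$ exactly reproduces $\Theta$. Because the lift is only controlled on $V_0\cap g(U)$, extra base-locus components can appear elsewhere, and this is why the conclusion is phrased with respect to an open set $U'\subseteq V$ rather than all of $V$. The finite generation of the section ring of $D$ is precisely what converts this otherwise rigid lifting problem into the standard Serre-vanishing statement on the ample model $X_0$, where we genuinely have an ample polarization to work with.
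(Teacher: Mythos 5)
Your proposal is correct and follows essentially the same route as the paper: pass to the ample model of $D$ using the finite generation of the section ring, lift $\Theta$ via Serre vanishing on the ample model, and conclude by inversion of adjunction, with a tie-break to arrange that $V$ is an exceptional lc center. The one imprecision is the claim ``$D\sim_{\mathbb{Q}} g^{*}D_0$''; the correct statement is the section-ring isomorphism $R(X,D)^{(p)}\cong R(X_0,D_0)$ (so a divisor pulled back from $X_0$ differs from a member of $|pD|$ by a fixed part supported off $U$), but since you only ever use the correspondence of sections and work on $U$, this does not affect the argument.
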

\begin{proof}
Since the graded ring of sections $R(X,D)$ of $D$ is finitely generated, there exist a birational morphism $\mu : X' \rightarrow X$, an effective divisor $N$ on $X'$ and a positive integer $p$
such that $D'=\mu^* (pD) -N$ is globally generated and $R(X',D')=R(X,D)^{(p)}$ (see \cite[Example 2.1.31]{lazarsfeld1}).
One may now take the Iitaka fibration $\phi: X' \rightarrow X''$ relative to $D'$ (see \cite[Theorem 2.1.27]{lazarsfeld1}). Let $A$ be an ample divisor on $Y$ such that $\phi^* A = D'$
and let $U$ be an open subset of $X$ on which $X$ and $X''$ are isomorphic. Since $R(X'', A) = R(X, D)^{(p)}$, we have reduced ourselves to the case where $D$ is ample.
One can then conclude by using Serre vanishing and inversion of adjunction as in \cite[Lemma 6.8.3]{kollar}.
\end{proof}

Now we give a well-known criterion for birationality in terms of the existence of certain log canonical centers.

\begin{lemma}\label{separation}
Let $X$ be a smooth projective variety and $D$ a big divisor on $X$. Take a decomposition $D\sim_\mathbb{Q} A+B$ with $A$ an ample $\mathbb{Q}$-divisor and $B$ effective.
Let $0<\lambda<\lceil \lambda \rceil$ be a rational number and let $x$ and $y$ be two points not contained in the support of $B$ and the base locus of $|K_X + \lceil \lambda \rceil D|$.
Suppose that, after possibly switching $x$ and $y$, there exists an effective 
$\mathbb{Q}$-divisor $\Delta\sim_\mathbb{Q} \lambda D$ such that $\operatorname{LLC}(X,\Delta,x)=\{x\}$ and $y\in\operatorname{Nklt}(X,\Delta)$.
Then $|K_X +  sD|$ separates $x$ and $y$ for every integer $s\geqslant \lceil \lambda \rceil$.
\end{lemma}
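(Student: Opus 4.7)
The plan is to apply Nadel vanishing after a small perturbation of $\Delta$ that absorbs the non-ample part of $sD - \Delta$ without altering the local structure at $x$ and $y$. Using the decomposition $D \sim_\mathbb{Q} A+B$, set
\[
\Delta' = \Delta + (s-\lambda)B.
\]
Then $sD - \Delta' \sim_\mathbb{Q} (s-\lambda)A$ is ample because $s \geqslant \lceil\lambda\rceil > \lambda$, and since $x, y \notin \operatorname{Supp}(B)$ the divisor $(s-\lambda)B$ is locally trivial at $x$ and $y$. Consequently $\mathcal{I}(X, \Delta')$ agrees with $\mathcal{I}(X, \Delta)$ in a neighbourhood of each point, so we still have $\operatorname{LLC}(X, \Delta', x) = \{x\}$ and $y \in \operatorname{Nklt}(X, \Delta')$.

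Nadel vanishing applied to $L = K_X + sD$ then yields $H^1(X, \mathcal{O}(K_X + sD) \otimes \mathcal{I}(X,\Delta')) = 0$, hence the restriction surjection
\[
H^0(X, K_X + sD) \twoheadrightarrow H^0\bigl(X, (K_X + sD) \otimes \mathcal{O}_X/\mathcal{I}(X, \Delta')\bigr).
\]
Because $\{x\}$ is the minimal lc center at $x$, the point $x$ is isolated in $\operatorname{Nklt}(X, \Delta')$, so the cosupport splits as $\{x\} \sqcup Z$ with $y \in Z$. I would choose a class in the target that is a nonzero scalar in the $\{x\}$-component and vanishes identically on the $Z$-component, and lift it to a section $\sigma \in H^0(X, K_X + sD)$. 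Then $\sigma(x) \ne 0$, whereas the germ of $\sigma$ at $y$ lies in $\mathcal{I}(X, \Delta')_y \subseteq \mathfrak{m}_y$, forcing $\sigma(y) = 0$; this produces a divisor in $|K_X + sD|$ passing through $y$ but not $x$. Combined with the hypothesis $x, y \notin \operatorname{Bs}|K_X + \lceil\lambda\rceil D|$ (promoted to $|K_X + sD|$ for $s > \lceil\lambda\rceil$ by multiplying a section of $K_X + \lceil\lambda\rceil D$ nonvanishing at $y$ with a section of $(s-\lceil\lambda\rceil)D$ nonvanishing at $y$, furnished by the decomposition $D \sim_\mathbb{Q} A+B$ via a sufficiently divisible multiple of $A$ and the canonical section of $B$), this delivers the desired separation.

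The main obstacle, and the reason the decomposition $D \sim_\mathbb{Q} A+B$ is built into the statement, is that the natural difference $sD - \Delta \sim_\mathbb{Q} (s-\lambda)D$ is only big and not necessarily nef, so Nadel vanishing cannot be invoked directly. The perturbation by $(s-\lambda)B$ is the minimal modification of $\Delta$ that cancels the effective part of $D$ (leaving the ample piece $(s-\lambda)A$) while being locally invisible at $x$ and $y$; once this adjustment is in place, the rest of the argument is a standard extraction of a separating section from the resulting quotient sheaf.
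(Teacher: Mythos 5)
Your proof is correct and follows essentially the same route as the paper: replace $\Delta$ by an enlarged $\Delta'$ that absorbs the effective part of $(s-\lambda)D$, apply Nadel vanishing, and lift a class supported at the isolated point $x$ of $\operatorname{Nklt}(X,\Delta')$ to a section vanishing at $y$ but not at $x$. The only (cosmetic) difference is that the paper takes $\Delta' = \Delta + (s-\lambda-\epsilon)A + (s-\lambda)B$ so that $sD - \Delta' \sim_\mathbb{Q} \epsilon A$, whereas your simpler choice $\Delta' = \Delta + (s-\lambda)B$ leaves $sD - \Delta' \sim_\mathbb{Q} (s-\lambda)A$, which neatly sidesteps the implicit question of whether $A$ passes through $x$ or $y$; you also spell out the promotion of the base-locus condition to $|K_X+sD|$, which the paper leaves tacit.
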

\begin{proof}
For any $\epsilon > 0$ we have that $sD - (\Delta + (s-\lambda-\epsilon)A + (s-\lambda)B)\sim_\mathbb{Q} \epsilon A$ and therefore
$H^1 (X,\mathcal{O}_X (K_X + sD)\otimes \mathcal{I}(\Delta + (s-\lambda-\epsilon)A + (s-\lambda)B))=0$ by Nadel vanishing. As $x$ and $y$ do not belong to the support of $B$, 
we have that $\Delta + (s-\lambda-\epsilon)A + (s-\lambda)B$ still has a maximal lc center at $x$ and is not klt at $y$. 
Consider the exact sequence:
\[
0\rightarrow \mathcal{O}_X (K_X+sD)\otimes\mathcal{I} \rightarrow \mathcal{O}_X(K_X+sD)\rightarrow 
\frac{\mathcal{O}_X(K_X+sD)}{\mathcal{O}_X (K_X+sD)\otimes\mathcal{I}}\rightarrow 0.
\]
By taking sections we get that 
\[
H^0(X,\mathcal{O}_X(K_X+sD))\rightarrow H^0\left( X,\frac{\mathcal{O}_X(K_X+sD)}{\mathcal{O}_X (K_X+sD)\otimes\mathcal{I}}\right) \rightarrow 0.
\]
By the previous discussion, the point $x$ is a component of the support of $\frac{\mathcal{O}_X(K_X+sD)}{\mathcal{O}_X (K_X+sD)\otimes\mathcal{I}}$.
We may therefore find a section $\sigma\in H^0(X,\mathcal{O}_X(K_X+sD))$ that vanishes at $y$ but not at $x$ so that
$|K_X +  sD|$ separates $x$ and $y$ as desired.
\end{proof}

Provided that we already have a generically finite map, there is an easy way to produce log canonical centers with zero-dimensional support.

\begin{lemma}\label{lcfinite}
Let $(X,\Delta)$ be a log pair of dimension $n$, and let $D$ be an integral Weil divisor such that the image $Y$ of the rational map $\phi_D$ has dimension $n$.
Then there is an open set $U$ such that for any points $x$ and $y\in U$, we may find a rational number $0<\epsilon\ll 1$ and a $\mathbb{Q}$-divisor
$\Delta'\sim_\mathbb{Q} (n+1+\epsilon)D$ such that $LLC(X,\Delta + \Delta',x)=\{x\}$ and $\Delta + \Delta'$ is not klt at $y$.
\end{lemma}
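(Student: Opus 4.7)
The plan is to assemble $\Delta'$ from three pieces: (i) a divisor pinning $\{x\}$ as an exceptional lc center, using generic finiteness of $\phi_D$; (ii) a tie-breaking perturbation killing the higher-dimensional lc centers through $x$ introduced in (i); and (iii) an integral section of $|D|$ through $y$ to force not-klt there. Throughout, I would work on the open set $U\subset X$ avoiding $\operatorname{Supp}(\Delta)$, the singular locus of $X$, the base locus of $|D|$, and the locus where $\phi_D$ fails to be étale; for $x,y\in U$ general we have $\phi_D(x)\neq\phi_D(y)$ and $d\phi_D$ injective at both points.

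For piece (i), injectivity of $d\phi_D$ at $x$ gives sections $s_1,\dots,s_n\in H^0(X,D)$ vanishing at $x$ with linearly independent differentials there; choosing them generically I can also force their supports to miss $y$. The zero divisors $D_i:=(s_i)_0\in|D|$ are then smooth and transverse at $x$, so $E_x:=D_1+\cdots+D_n\sim nD$ is SNC at $x$ with $\{x\}$ as the unique minimal lc center of $(X,E_x)$. For piece (ii), I apply Lemma \ref{tiebreak1} to $(X,E_x)$: since $D$ is big (generic finiteness of $\phi_D$ implies $\operatorname{vol}(X,D)>0$), there is an effective $E\sim_\mathbb{Q} aD$ such that $F_x:=(1-\epsilon_1)E_x+\epsilon_1 E$ has $\{x\}$ as the only lc center through $x$, for all sufficiently small $\epsilon_1>0$. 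For piece (iii), since $\phi_D(x)\neq\phi_D(y)$ the evaluation map $H^0(X,D)\to\mathbb{C}^2$ at $\{x,y\}$ is surjective, so I may pick an effective $E_y\in|D|$ with $y\in\operatorname{Supp}(E_y)$ and $x\notin\operatorname{Supp}(E_y)$; as $E_y$ is integral with $y$ in its support, $(X,E_y)$ is not klt at $y$.

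I then set
\[
\Delta':=F_x+E_y+\eta E',
\]
where $E'\in|D|$ is a general section avoiding $x$ and $\eta\geqslant 0$ is the unique value making $\Delta'\sim_\mathbb{Q}(n+1+\epsilon)D$ for a prescribed small $\epsilon>0$ (explicitly $\eta=\epsilon-\epsilon_1(a-n)$, nonnegative once $\epsilon_1$ is small enough). Near $x$, the pieces $\Delta$, $E_y$, and $\eta E'$ all avoid $x$, so $(X,\Delta+\Delta')$ coincides with $(X,F_x)$ there and hence $\operatorname{LLC}(X,\Delta+\Delta',x)=\{x\}$. At $y$, the coefficient-one divisor $E_y$ forces $(X,\Delta+\Delta')$ to fail to be klt at $y$, regardless of the remaining effective pieces.

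The subtlety worth flagging is that the most direct attempt---forming $E_x+E_y\sim_\mathbb{Q}(n+1)D$ and applying tie-breaking once to the sum---would multiply $E_y$ by $(1-\epsilon_1)$, dropping its coefficient below $1$ and destroying the not-klt condition at $y$. The workaround is to tie-break on the ``$x$-piece'' $E_x$ alone, and only then attach $E_y$ at its full integer coefficient.
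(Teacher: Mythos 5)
Your argument is correct and lands very close to the paper's proof, which follows \cite[Lemma 2.8]{haconmckernan}: the paper takes $n$ general hyperplanes $H_1,\dots,H_n$ through $\phi_D(x)$, one general hyperplane $H_{n+1}$ through $\phi_D(y)$, sets $\Gamma=\sum_{i=1}^n H_i+(1+\epsilon)H_{n+1}$, and applies Lemma~\ref{tiebreak2} to $\phi_D^*\Gamma$ in one shot. The ``subtlety'' you flag at the end is real for your normalization, but it is handled by the paper in a different way than you propose: the paper puts coefficient $1+\epsilon$ on $H_{n+1}$, so $\phi_D^*\Gamma$ is already \emph{not lc} at $y$, and Lemma~\ref{tiebreak2} is tailored to preserve not-lc-at-$y$ through the rescaling $(1-\epsilon_1)\Delta+\epsilon_1 E$. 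Your sequencing (tie-break with Lemma~\ref{tiebreak1} on the $x$-piece $E_x$ alone, then attach $E_y$ at coefficient one, then pad with $\eta E'$) is a valid alternative that trades a single application of Lemma~\ref{tiebreak2} for Lemma~\ref{tiebreak1} plus bookkeeping of the divisor class; both deliver the same conclusion.

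One point worth noting about both arguments: your construction of $E_y$ missing $x$, and the paper's choice of $H_{n+1}$ through $\phi_D(y)$ but not $\phi_D(x)$, both presume $\phi_D(x)\neq\phi_D(y)$. When $\phi_D$ is generically finite of degree $>1$, any nonempty open $U$ contains distinct pairs with $\phi_D(x)=\phi_D(y)$, for which every member of $|D|$ through $y$ also passes through $x$. So, strictly read, the lemma's ``for any $x,y\in U$'' should really be ``for general $x,y$''---which is indeed how the lemma is invoked downstream (e.g.\ in Theorems~\ref{theorem3} and~\ref{theorem4}). You implicitly acknowledge this by writing ``for $x,y\in U$ general,'' and the paper does the same thing silently, so this is a shared reading of the statement rather than a gap in your proof.
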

\begin{proof}
We slightly modify the argument given in \cite[Lemma 2.8]{haconmckernan}. After possibly resolving indeterminacies, we may assume that the moving part of the complete linear series
$|D|$ gives a morphism $\phi_D : X\rightarrow Y \subseteq \mathbb{P}^N$. 
Let $U$ be an open subset of $X$ that is disjoint from the fixed part of $|D|$ and such that $(\phi_D)_{|_U}$ is \'{e}tale.
Let $H_1, \cdots , H_n$ be general hyperplanes through $\phi_D (x)$, let $H_{n+1}$ be a general hyperplane
through $\phi_D(y)$ and let $\Gamma = \sum_{i=1} ^n H_i + (1+\epsilon)H_{n+1}$. 
The result follows after applying Lemma \ref{tiebreak2} to the divisor $\phi_D ^* \Gamma$.
\end{proof}

We conclude this section with some results on surfaces that we will need later on.

\begin{lemma}\label{surfaces}
Let $S$ be a smooth surface of general type and let $r$ be a positive integer. If $h^0(S,rK_S)>0$, the $r$-th canonical map has birationally bounded fibers.
\end{lemma}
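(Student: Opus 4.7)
The plan is to split by the value of $r$, invoking a different classical input in each range. For $r=1$, the hypothesis $h^0(S,K_S)>0$ reads $p_g(S)>0$ and the statement is exactly Beauville's theorem quoted at the beginning of the introduction. For $r\geq 5$, Bombieri's theorem gives that $\varphi_{rK_S}$ is birational onto its image, so its general fibers are singletons and birational boundedness is automatic.

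The substance then lies in the range $r\in\{2,3,4\}$, and I would first show that in this range the image of $\varphi_{rK_S}$ has positive dimension. On a minimal model of $S$ one has $K^2\geq 1$, and the Miyaoka-Yau inequality $K^2\leq 9\chi(\mathcal{O}_S)$ together with the integrality of $\chi$ forces $\chi(\mathcal{O}_S)\geq 1$. Riemann-Roch and Kawamata-Viehweg vanishing then yield
\[
 h^0(S,rK_S)=\chi(\mathcal{O}_S)+\binom{r}{2}K_S^2\geq 2,
\]
so $\varphi_{rK_S}$ is non-constant. If its image is a surface then the map is generically finite and the general fibers are $0$-dimensional, hence trivially birationally bounded. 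Otherwise the image is a curve, and the Stein factorization of $\varphi_{rK_S}$ produces a fibration $f:S\to C$ onto a smooth curve whose general fiber $F$ is a smooth connected curve. Writing $rK_S\sim aF+E$ with $a\geq 1$ and $E$ an effective divisor not containing $F$ in its support gives $r(2g(F)-2)=E\cdot F$, so bounding $g(F)$ reduces to bounding $E\cdot F$ by a constant depending only on $r$. For $r\leq 4$ this is furnished by the classical analysis of low pluricanonical pencils on surfaces of general type (Bombieri, Francia, Reider, Xiao), which closes the argument since smooth curves of bounded genus form a birationally bounded family.

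The main obstacle is the universal bound on $g(F)$ in the pencil case for $r\in\{2,3,4\}$; this is not proved by any of the new techniques developed in the paper, but imported as a black box from the classical theory of surfaces. Everything else is a formal case analysis.
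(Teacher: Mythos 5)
The paper's proof of this lemma is a bare citation: it refers to \cite{beauville} for the canonical map and to \cite[Theorem 5.1]{barth} for the pluricanonical maps, and proves nothing in-house. Your proposal is therefore not a different route so much as a correct unpacking of what those references contain. The $r=1$ and $r\geqslant 5$ cases match the cited sources directly. For $r\in\{2,3,4\}$ your reduction is sound: pass to the minimal model, use $\chi(\mathcal{O}_S)\geqslant 1$ (which you can also get directly from Castelnuovo's theorem, without Miyaoka--Yau) and Riemann--Roch with vanishing to see $h^0(rK_S)\geqslant 2$, then split on whether the image of $\varphi_{rK_S}$ is a surface (fibers are points) or a curve. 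In the pencil case the identity $r\bigl(2g(F)-2\bigr)=rK_S\cdot F=E\cdot F$ is correct (note that the moving part of $|rK_S|$ is only numerically, not linearly, a multiple of $F$ when the base curve has positive genus, but that is all the intersection computation uses), so boundedness of $g(F)$ is exactly the remaining content. You are right that this is imported as a black box from the classical theory: for $r=2$ the relevant statement is Xiao Gang's finiteness theorem for the bicanonical map together with the explicit analysis of the $p_g=0$ cases where $|2K|$ can actually be a pencil (numerical Godeaux surfaces, where the general member has genus $4$), and for $r=3,4$ Bombieri's classification of the exceptional surfaces shows the pencil case does not occur. The only improvement I would suggest is to replace the vague appeal to ``Bombieri, Francia, Reider, Xiao'' with a single citable statement, which is precisely what the paper does by pointing to \cite[Theorem 5.1]{barth}; as written, the last step is correct in spirit but not yet a proof one could check line by line.
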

\begin{proof}
See \cite{beauville}, and \cite[Theorem 5.1]{barth}.
\end{proof}

\begin{lemma}\label{lcsurface}
Let $S$ be a smooth projective surface of general type and $x$ a general point in $S$. Fix $\epsilon > 0$.
Then there exists an effective $\mathbb{Q}$-divisor $\Theta\sim_\mathbb{Q} \mu K_S$ with $\mu <5/2 + \epsilon$
such that $x$ is a maximal lc center of $\Delta$.
\end{lemma}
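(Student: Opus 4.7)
The plan is to produce a divisor $D \in |mK_S|$ of very high multiplicity at $x$ --- close to the theoretical maximum $m\sqrt{\operatorname{vol}(S)}$ --- and scale it down by a factor of $2/\operatorname{mult}_x D$, so that the resulting $\mathbb{Q}$-divisor is log canonical at $x$ with $\{x\}$ as its unique minimal (equivalently, maximal in codimension) log canonical center.

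First I would pass to a minimal model, ensuring $K_S$ is nef and $\operatorname{vol}(S) = K_S^2 \geqslant 1$. Fixing a small $\delta > 0$, Lemma \ref{volumesections} gives $h^0(S, mK_S) \geqslant (K_S^2 - \delta)m^2/2$ for $m \gg 0$, and Lemma \ref{conditions} says that vanishing at $x$ to order $\geqslant k$ imposes at most $\binom{k+1}{2}$ conditions. Hence, choosing $k$ to be essentially $\lfloor m\sqrt{K_S^2 - \delta}\rfloor$ leaves a linear subsystem $W \subseteq |mK_S|$ of positive dimension consisting of divisors with multiplicity $\geqslant k$ at $x$. I would then take $D \in W$ general and set $\Theta := (2/k)D \sim_\mathbb{Q} (2m/k)K_S$. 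A local computation on the blowup $\pi \colon \tilde S \to S$ at $x$, with exceptional divisor $E$, shows that $\pi^{*}\Theta = (2/k)\pi^{-1}_{*}D + 2E$, so $E$ acquires discrepancy exactly $-1$ with respect to $(S,\Theta)$, while the strict transform $\pi^{-1}_{*}D$ has coefficient $2/k \leqslant 1$; provided the tangent cone of $D$ at $x$ is reduced (so $\pi^{-1}_{*}D$ meets $E$ transversely at $k$ distinct points), $(S,\Theta)$ is log canonical at $x$ with $E$ the unique lc place lying over $x$, making $\{x\}$ a maximal log canonical center in the desired sense.

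Finally I would bound the coefficient $\mu = 2m/k$. Since $k \geqslant m\sqrt{K_S^2 - \delta} - O(1)$, as $m \to \infty$ this ratio tends to $2/\sqrt{K_S^2 - \delta} \leqslant 2(1+\delta)$, well below $5/2 + \epsilon$ once $\delta$ is chosen small enough. The hardest step will be to secure the reducedness of the tangent cone: this reduces to showing that the evaluation map from $W$ to the space $H^0(\mathbb{P}(T_xS),\mathcal{O}(k))$ of degree-$k$ tangent cones is surjective, which for general $x \in S$ follows from the dimension count $h^0(S, mK_S) - \binom{k+1}{2} > k+1$ once $m$ is sufficiently large relative to $\delta$. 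If for degenerate configurations the tangent cone cannot be made reduced and the minimal lc center of $(S,\Theta)$ ends up being a curve $C$ through $x$ rather than $\{x\}$ itself, one can fall back on tie-breaking (Lemma \ref{tiebreak1}) combined with Lemma \ref{serrelift} applied to the pair $(C, \frac{1}{K_S\cdot C}K_S|_C$-class of the point $x)$, cutting $C$ down to $\{x\}$ at the cost of an additional term of size $1/(K_S \cdot C) \leqslant 1$ in $\mu$; the slack between $2$ and $5/2$ in the stated bound is engineered precisely to absorb this correction.
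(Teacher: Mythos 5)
The paper's proof is a one-line reference to Lazarsfeld's \emph{Lectures on Linear Series}, Theorem~7.4, with parameters $s=0$ and $x=(5/2+\epsilon)^2$; that theorem encapsulates the Ein--Lazarsfeld technique for producing isolated log canonical centers at very general points on a surface. Your proposal tries to reprove this from scratch, which is legitimate, but there are two genuine gaps.

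First, the reduced tangent cone step is not justified by the dimension count you give. The inequality $h^0(S,mK_S)-\binom{k+1}{2}>k+1$ only bounds $\dim W$ from below; it says nothing about the rank of the evaluation map $W\to H^0(\mathbb{P}(T_xS),\mathcal{O}(k))$. The kernel of that map is $W_{k+1}$ (sections of multiplicity $\geqslant k+1$), and all you know a priori is $\dim W-\dim W_{k+1}\leqslant k+1$; the inequality could be strict, and indeed when $k$ is taken close to $m\sqrt{\operatorname{vol}(S)}$ (which is exactly what you need to make $\mu$ small) the map is far from being a priori surjective. Without surjectivity, the image could in principle lie entirely inside the discriminant of non-reduced degree-$k$ forms, and you cannot conclude that a general $D\in W$ has reduced tangent cone. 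This is precisely the difficulty that the Ein--Lazarsfeld curve case was invented to circumvent, so one cannot bypass it by a dimension count.

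Second, the fallback bound is off by more than the available slack. You correctly identify that when the minimal lc center of the scaled divisor is a curve $C$ through $x$, one needs to cut it down to $\{x\}$ by adding a divisor pulled back from $C$, and that this costs roughly $1/(K_S\cdot C)$ in $\mu$. But $K_S\cdot C$ can be as small as $1$ (e.g., when $K_S^2=1$, so $2/\sqrt{K_S^2}=2$, and $K_S\cdot C=1$), giving a total $\mu\leqslant 2+1=3$, which exceeds $5/2+\epsilon$. The slack of $1/2$ you invoke is not enough. The actual $5/2$ bound in the Ein--Lazarsfeld argument comes from a finer balancing act: one builds the initial divisor with multiplicity $\alpha$ somewhat larger than $2$ (not just barely above $2$), uses the very generality of $x$ to ensure $C$ moves (hence $C^2\geqslant 0$ and $C$ smooth at $x$), and then plays the residual multiplicity of $\Theta_1$ along $C$ against the degree of the divisor one must add on $C$. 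Your sketch does not carry out this optimization, and the naive version of the two-step argument you describe does not reach $5/2$.

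In short, the overall architecture of your proof is the same as the cited one, but both the reduced tangent cone shortcut and the stated numerical bound for the curve case need to be replaced with the actual Ein--Lazarsfeld analysis (or with the reference the paper uses).
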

\begin{proof}
Just follow the proof of \cite[Theorem 7.4]{lazarsfeldsurface} with $s=0$ and $x=(5/2+\epsilon)^2$.
\end{proof}

\section{Birational families of intrinsic tigers}\label{tigers}

In this section we introduce the main ideas of the paper, which are inspired by \cite[Section 3-4]{jamesfano}. Let $X$ be a smooth projective variety of dimension $n$ and $D$ a
$\mathbb{Q}$-divisor on $X$. For a positive integer $k$ and a point $x\in X$ we define 
\[
S_x ^k (D)=\{H\in |\lfloor D\rfloor |\ \text{such that $\operatorname{mult}_x H \geqslant k$}\}
\]
and 
\[
{B'}_x ^k (D)=\bigcap_{H\in S_x ^k(D)} H
\]
with the reduced scheme structure. Finally, we define $B_x ^ k(D)$ to be the union of the irreducible components of ${B'}_x ^k (D)$ passing through $x$.
It's immediate and useful to globalize the above construction in the following way. Choose any affine open set $U\subseteq X$ and consider the diagonal 
$\Delta \subseteq X\times U$. Then we define
\[
\mathcal{S}^k (D) = \{H\in |{\operatorname{pr}}_1 ^* (\lfloor D \rfloor)|\ \text{such that $\operatorname{mult}_\Delta H \geqslant k$}\}
\]
and
\[
{\mathcal{B}'}^k (D) = \bigcap_{H\in \mathcal{S} ^k(D)} H
\]
again with the reduced scheme structure. Analogously, we define $\mathcal{B}^k (D)$ to be the union of the irreducible components of ${\mathcal{B}'}^k (D)$ containing $\Delta$.
By Lemma \ref{fibermult}, after possibly shrinking $U$, we have that 
\[
B_x ^k (D) = \operatorname{pr}_1 (\mathcal{B} ^k (D)\cap X\times \{x\})
\]
for all $x\in U$. By the above description, we may assume
that $\operatorname{dim} B_x ^k (D)$ is constant for all $x\in U$, and we denote its value by $d(D,k)$.

\begin{lemma}
In the above notation, for any $k>0$ and all $x\in U$ we have that $B_x ^k (D)\subseteq B_x ^{k+1} (D)$. In particular, $d(D,k)\leqslant d(D,k+1)$.
\end{lemma}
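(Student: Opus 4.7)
The statement is essentially a monotonicity property, and the proof should be almost formal from the definitions.

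First I would observe the trivial containment at the level of the linear subseries: if $H\in |\lfloor D\rfloor|$ satisfies $\operatorname{mult}_x H \geqslant k+1$, then in particular $\operatorname{mult}_x H \geqslant k$, so $S_x^{k+1}(D)\subseteq S_x^k(D)$. Because ${B'}_x^{k+1}(D)$ is an intersection over a smaller family of divisors, this containment of the indexing sets flips the inclusion of intersections, giving
\[
{B'}_x^k(D) \;\subseteq\; {B'}_x^{k+1}(D).
\]

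Next I would pass to the irreducible components through $x$. Let $Z$ be any irreducible component of ${B'}_x^k(D)$ passing through $x$. By the previous step $Z\subseteq {B'}_x^{k+1}(D)$, and since $Z$ is irreducible it is contained in some irreducible component $Z'$ of ${B'}_x^{k+1}(D)$. As $x\in Z\subseteq Z'$, the component $Z'$ passes through $x$ and hence is part of $B_x^{k+1}(D)$ by definition. Taking the union over all such $Z$ yields $B_x^k(D)\subseteq B_x^{k+1}(D)$, which is the first claim.

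The dimension inequality $d(D,k)\leqslant d(D,k+1)$ then follows immediately: an inclusion of (equidimensional, by the global description via $\mathcal{B}^k(D)$ and Lemma \ref{fibermult}) reduced subschemes of a fixed variety forces the corresponding dimensions to satisfy $\dim B_x^k(D)\leqslant \dim B_x^{k+1}(D)$, and by the setup the common value on $U$ is $d(D,k)$ and $d(D,k+1)$ respectively. There is no real obstacle here — the only subtle point is making sure that cutting down to the components through $x$ preserves the inclusion, which is handled by the observation that an irreducible subset containing $x$ is swallowed by a single component on the right-hand side.
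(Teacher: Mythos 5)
Your argument is correct and is exactly the monotonicity reasoning the paper elides with the single word ``Obvious'': $S_x^{k+1}(D)\subseteq S_x^k(D)$ reverses the inclusion of base loci, and then each component of ${B'}_x^k(D)$ through $x$ is swallowed by a component of ${B'}_x^{k+1}(D)$ through $x$. One small remark: the parenthetical appeal to equidimensionality is unnecessary, since for any inclusion of closed subsets the dimension (being the maximum over irreducible components) is automatically non-decreasing.
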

\begin{proof}
Obvious.
\end{proof}

We introduce now the main definition of this paper.

\begin{definition}\label{intrinsictigers}
Let $X$ be a smooth projective variety of dimension $n$, $D$ a $\mathbb{Q}$-divisor on $X$ and $w>0$ a positive rational number.
We say that a flat morphism of smooth varieties $f:Y\rightarrow B$ is a birational family of intrinsic tigers of weight $w$ relative to $D$ if there are positive integers $k$ and $l>0$ and 
a birational morphism $\pi:Y\rightarrow X$ such that
\begin{enumerate}
\item $d(lD,k) < n$.
\item $\pi(f^{-1} (b))=B_x ^k (lD)$ for general $b\in B$ and general $x\in \pi(f^{-1} (b))$.
\item For general $x\in X$, every element of $S_x ^k(lD)$ has multiplicity at least $wl(n-d(lD,k))$ along $B_x ^k (lD)$.
\end{enumerate}
\end{definition}

To clarify the above definition, let's start by pointing out that this is a covering family of lc centers, and therefore it is indeed a birational family of tigers as in \cite{jamesfano}.

\begin{lemma}
Let $X$ be a smooth projective variety of dimension $n$ and let $D$ be a $\mathbb{Q}$-divisor on $X$. Let $f:Y\rightarrow X$ be a birational family of intrinsic tigers of weight 
$w>0$ relative to $D$, and let $d$ be the dimension of the general fiber of $f$.
Then, for general $b\in B$, there is a $\mathbb{Q}$-divisor $\Delta_b \sim_\mathbb{Q} \lambda D$ with $\lambda \leqslant 1/w$ such that $\pi(f^{-1} (b))$ is an lc center for $\Delta_b$.
\end{lemma}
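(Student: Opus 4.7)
The plan is to produce $\Delta_b$ from a single general element of $S_x^k(lD)$, rescaled so that its multiplicity along $Z := \pi(f^{-1}(b))$ equals the codimension of $Z$ in $X$; the exceptional divisor of the blowup of $Z$ will then be a log canonical place whose center is $Z$.

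First I fix a general $b \in B$ and set $Z := \pi(f^{-1}(b))$, $d := \dim Z = d(lD,k)$, and $c := n - d \geq 1$ by condition (1). Condition (2) gives $Z = B_x^k(lD)$ for a general $x \in Z$, and condition (3) ensures that after restricting $x$ to a suitable open subset of $X$, every $H \in S_x^k(lD)$ satisfies $\operatorname{mult}_Z H \geq wlc$. I then pick a general $H \in S_x^k(lD)$, let $m := \operatorname{mult}_Z H \geq wlc$, and define
\[
\Delta_b := \frac{c}{m}H.
\]
Since $H \sim lD$, one has $\Delta_b \sim_\mathbb{Q} (lc/m)D$, so $\lambda := lc/m \leq lc/(wlc) = 1/w$, which is the required bound.

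Next I would show that $Z$ is an lc center of $(X,\Delta_b)$. Blowing up the smooth locus of $Z$ via $\mu:\tilde X \to X$ yields an exceptional divisor $E$, a $\mathbb{P}^{c-1}$-bundle over its image. Combining $K_{\tilde X/X}=(c-1)E$ with $\mu^*\Delta_b = (c/m)\tilde H + cE$ (where $\tilde H$ is the strict transform of $H$), the discrepancy of $E$ is $(c-1) - c = -1$, so $E$ is a log canonical place with center $\mu(E) = Z$, making $Z$ a non-klt center of $(X,\Delta_b)$.

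The hardest part will be the final verification that $(X,\Delta_b)$ is lc at the generic point of $Z$, which promotes $Z$ to a genuine lc center. The plan is to exploit the genericity of $H \in S_x^k(lD)$: its projectivized tangent cone along the smooth locus of $Z$ is a smooth degree-$m$ hypersurface in the generic fibers of $E$, so $\tilde H$ is smooth and meets $E$ transversally at a generic point of $E$. In the natural regime $c \leq m$ (which follows from $m \geq wlc$ whenever $wl \geq 1$, the range relevant to the applications), the pair $(\tilde X, (c/m)\tilde H + E)$ has simple normal crossings at the generic point of $E$ with both boundary coefficients at most $1$, hence is lc there. This descends to lc-ness of $(X,\Delta_b)$ at the generic point of $Z$, completing the argument.
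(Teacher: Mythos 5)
Your plan shares the skeleton of the paper's argument (pick a general $H\in S_x^k(lD)$, use the multiplicity bound along $Z=B_x^k(lD)$ to produce a small multiple of $H$ with $Z$ as an lc center, note the bound on $\lambda$), and the blowup computation showing $E$ has discrepancy $-1$ for $\tfrac{n-d}{m}H$ is correct. The problem is the last step. You set $\Delta_b=\tfrac{n-d}{m}H$ and then need to prove it is lc at the generic point $\eta_Z$, which you try to do by asserting that the projectivized tangent cone of a general $H\in S_x^k(lD)$ along $Z$ is a smooth degree-$m$ hypersurface meeting $E$ transversally. That assertion is not justified. Bertini gives smoothness of $H$ away from $Z$ near $x$ because $Z$ is the base locus of $S_x^k(lD)$ there, but the tangent cones of members of $S_x^k(lD)$ at a generic $z\in Z$ only sweep out a linear subsystem of $|\mathcal{O}_{\mathbb{P}(N_{Z/X,z})}(m)|$, and nothing forces that subsystem to be base-point free or even positive-dimensional; if every member has a non-reduced tangent cone the single blowup is not a log resolution at $\eta_Z$, the actual lct of $H$ along $Z$ is strictly less than $\tfrac{n-d}{m}$, and your $\Delta_b$ fails to be lc. You also slip in the auxiliary assumption $wl\geqslant 1$ (to get the boundary coefficient $\leqslant 1$), which is not a hypothesis of the lemma.

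The paper's proof is shorter precisely because it avoids this: it sets $c:=\operatorname{lct}(X,H;\eta_Z)$ and takes $\Delta_b:=cH$. Lc-ness at $\eta_Z$ and the existence of a place of discrepancy $-1$ over $\eta_Z$ are then automatic, and the fact that a general $H$ is smooth away from $Z$ near $x$ pins the center of that place to be $Z$. The needed bound $c\leqslant \tfrac{n-d}{m}\leqslant \tfrac{1}{wl}$, hence $\lambda=cl\leqslant 1/w$, comes from Proposition~\ref{mult3} (multiplicity at a $d$-dimensional lc center is at most $n-d$); your own blowup computation gives the same inequality, since the discrepancy of $E$ for $tH$ drops below $-1$ as soon as $t>\tfrac{n-d}{m}$. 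So the repair is small: replace the normalization $\tfrac{n-d}{m}$ by the actual lct, keep your blowup only as the source of the inequality $\operatorname{lct}\leqslant \tfrac{n-d}{m}$, and drop the tangent-cone smoothness claim entirely.
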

\begin{proof}
Pick a general $x\in\pi(f^{-1} (b))$. By hypothesis, elements of $S_x ^k(lD)$ have multiplicity at least $wl(n-d)$ along $\pi(f^{-1} (b))=B_x ^k (lD)$. Since $B_x ^k (lD)$ is the base locus
of the linear system $S_x ^k(lD)$ in a neighborhood of $x$, the general element $H\in S_x ^k(lD)$ is smooth outside $B_x ^k(lD)$ locally around $x$. 
Let $c$ be the log canonical threshold of $H$ at the generic point 
of $B_x ^k(lD)$. By Proposition \ref{mult3} we have that $c\leqslant \frac{n-d}{wl(n-d)}$, and we may therefore take $\Delta_b = cH$.
\end{proof}

The existence of birational families of intrinsic tigers has, however, stronger consequences than the existence of \say{ordinary} families of tigers. 
In view of Lemma \ref{separation}, it is desirable for a birational family of tigers to have the separation property.
The main motivation behind Definition \ref{intrinsictigers} is therefore explained by the following lemma.

\begin{lemma}\label{firstorderseparation}
Let $X$ be a smooth projective variety of dimension $n$ and let $D$ be a $\mathbb{Q}$-divisor. Let $f:Y\rightarrow B$ be a birational family of intrinsic tigers of weight $w$ relative to $D$.
Then $f$ has the separation property.
\end{lemma}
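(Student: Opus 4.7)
The strategy is to construct $D_{b_1,b_2}$ as a positive combination $c_1 H_{x_1} + c_2 H_{x_2}$ of two divisors in $|lD|$, each highly singular along the corresponding center $V_i := \pi(f^{-1}(b_i))$ and chosen so as not to contain the other center in its support; the coefficients will be tuned to force $V_1$ to be an lc center and $V_2$ to be non-klt, while keeping the total coefficient below $(n+1)/w$.

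First, for general $b_1,b_2 \in B$ and general $x_i \in V_i$, condition (2) of Definition \ref{intrinsictigers} gives $V_i = B_{x_i}^k(lD)$. The key preliminary observation is that one may choose $H_{x_i} \in S_{x_i}^k(lD)$ general with $V_j \not\subset \operatorname{Supp}(H_{x_i})$ for $j \neq i$: since $\pi:Y\to X$ is birational, the family $\{V_b\}_{b \in B}$ covers $X$, so if $V_1$ were contained in the base locus ${B'}_{x_2}^k(lD)$ for every generic $b_1$, that base locus would equal $X$, forcing $S_{x_2}^k(lD)$ to be empty. Set $c_1 := \operatorname{lct}_{V_1}(H_{x_1})$ and $c_2 := 1/(wl)$. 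Condition (3) of Definition \ref{intrinsictigers} gives $\operatorname{mult}_{V_1}(H_{x_1}) \geq wl(n-d)$, and combined with Proposition \ref{mult3} this forces $c_1 \leq (n-d)/(wl(n-d)) = 1/(wl)$. Hence $D_{b_1,b_2} := c_1 H_{x_1} + c_2 H_{x_2} \sim_\mathbb{Q} (c_1 + c_2)\, lD$ with total coefficient $(c_1+c_2)\,l \leq 2/w \leq (n+1)/w$.

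Property (1) follows from the choice of $c_1$ as the log canonical threshold: $V_1$ is an lc center of $c_1 H_{x_1}$, and since $V_1 \not\subset \operatorname{Supp}(H_{x_2})$, the sum $D_{b_1,b_2}$ agrees with $c_1 H_{x_1}$ near the generic point of $V_1$, so $V_1$ persists as an lc center of $D_{b_1,b_2}$. For property (2), $\operatorname{mult}_{V_2}(D_{b_1,b_2}) \geq c_2 \cdot wl(n-d) = n-d$, which equals the codimension of $V_2$; cutting with $d$ general hyperplanes through a general smooth point of $V_2$ then reduces to a point of multiplicity at least $n-d$ in a variety of dimension $n-d$, so Proposition \ref{mult1} yields non-klt behavior at that point and hence at the generic point of $V_2$. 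I expect the delicate issue to be verifying that $V_1$ itself (rather than some strictly larger irreducible subvariety containing its generic point) is a non-klt center of $c_1 H_{x_1}$ for generic choices; a small tie-breaking perturbation via Lemma \ref{tiebreak1} should pin down the minimal lc center while preserving the coefficient bound $2/w \leq (n+1)/w$.
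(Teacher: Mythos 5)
Your proposal takes a genuinely different route from the paper's. The paper sums $H=H_1+H_2$ with $H_i\in S_{x_i}^k(lD)$ general, takes $c=\max\{\operatorname{lct}(X,H,p),\operatorname{lct}(X,H,q)\}$, and then must iterate: if $V_1$ is not an lc center of $cH$ (which can happen precisely because $H_2$ may pass through $p$), it adds a further general $H_3\in S_x^k(lD)$ and tie-breaks via Lemmas~\ref{tiebreak2} and \ref{tiebreak3}, possibly switching $x$ and $y$, repeating at most $n-1$ times. That iteration is why the bound $(n+1)/w$ appears. Your decoupling observation --- that since the $V_b$ cover $X$ and ${B'}_{x_2}^k(lD)$ is a proper closed subset, a general pair $(b_1,b_2)$ and a general $H_{x_i}\in S_{x_i}^k(lD)$ satisfy $V_j\not\subset\operatorname{Supp}(H_{x_i})$ for $j\neq i$ --- removes exactly the interference that forces the paper's iteration, and if it goes through it even gives the stronger bound $2/w$. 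This is a real structural simplification, not a rewording.

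There is, however, a genuine gap, and your proposed patch does not close it. You assert that $V_1$ is an lc center of $c_1H_{x_1}$ because $c_1=\operatorname{lct}(X,H_{x_1},p)$, but the lct at $p$ only says $(X,c_1H_{x_1})$ is lc and non-klt at $p$; the lc center through $p$ is a closed irreducible set containing $p$, hence containing $V_1=\overline{\{p\}}$, and a priori it could be strictly larger. Tie-breaking via Lemma~\ref{tiebreak1} cannot repair this: it singles out the \emph{minimal} lc center through a given point and perturbs so that it becomes the unique one, but it never shrinks that minimal center, and any lc center through $p$ already contains $V_1$. The correct repair is the Bertini argument the paper uses in the lemma immediately preceding \ref{firstorderseparation}: near $x_1$, $V_1=B_{x_1}^k(lD)$ is the base locus of $S_{x_1}^k(lD)$, so a general member $H_{x_1}$ is smooth away from $V_1$ in a neighborhood of $x_1$; since $c_1<1$, the non-klt locus of $c_1H_{x_1}$ near $x_1$ is contained in $V_1$, and combined with containing $p$ it must equal $V_1$ there, so $V_1$ is the lc center. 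With that substitution for the tie-breaking step, your argument is complete.

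Two smaller points. First, for property~(2) the reduction to Proposition~\ref{mult1} by cutting with hyperplanes needs the multiplicity bound to hold at a general closed point of $V_2$ and a line about why non-kltness of the cut-down pair propagates back (or, more directly, blow up $V_2$ at a general smooth point and note the exceptional divisor has discrepancy $\leq -1$); this is essentially Proposition~\ref{mult3} used in its contrapositive form, so worth a citation rather than a re-derivation. Second, you need both $V_1\not\subset\operatorname{Supp}(H_{x_2})$ and $V_2\not\subset\operatorname{Supp}(H_{x_1})$ simultaneously for a general pair $(b_1,b_2)$; your covering-family argument gives each separately for generic choices, and it should be stated that the bad locus in $B\times B$ is a proper closed subset, so a general pair avoids both.
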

\begin{proof}
Take general points $x\in \pi(f^{-1}(b_1))$ and $y\in \pi(f^{-1}(b_2))$. Choose furthermore general elements $H_1 \in S_x ^k (lD)$ and $H_2 \in S_y ^k (lD)$, and let $H=H_1+H_2$.
Denote by $p$ and $q$ the generic points of $B_x ^k (lD)$ and $B_y ^k (lD)$ respectively and by $d$ their dimension. 
Let 
\[
c=\operatorname{max}\{\operatorname{lct}(X,H,p), \operatorname{lct}(X,H,q)\}.
\] 
Clearly $c\leqslant (n-d)/(wl(n-d))=1/(wl)$ by Proposition \ref{mult3}.
Without loss of generality, we may assume that $cH$ is lc at $p$. If $B_x ^k (lD)$ is an lc center for $\Delta_{b_1,b_2}$ we are done. If not pick a general element
$H_3 \in S_x ^k (lD)$ and consider 
\[
c'=\operatorname{sup}\{t\in\mathbb{R}|\ H+tH_3 \text{ is log canonical at $p$}\}.
\]
Again, $c'\leqslant 1/(wl)$. If $H+c'H_3$ is not lc at $y$, after possibly tie breaking as in Lemma \ref{tiebreak2}, we may take $\Delta_{b_1,b_2}=H+c'H_3$.
If $H+c'H_3$ is lc at $y$, however, we must tie break as in Lemma \ref{tiebreak3}. As doing so might switch $x$ and $y$, some extra care is needed.
Clearly, if $x$ and $y$ are not switched by the tie break, then we can conclude as above. If they are switched, however, 
Lemma \ref{tiebreak3} implies that the dimension of the non klt locus at $x$ must go down, while the dimension of the non klt locus at $y$ does not increase. 
We can then conclude by repeating this procedure at most $n-1$ times.
\end{proof}

If we have a birational family of zero-dimensional intrinsic tigers, the above lemma allows us to distinguish any two points on $X$. 
In order to deal with families of higher dimensional tigers, we introduce the following definition.

\begin{definition}
Let $X$ be a smooth projective variety of dimension $n$ and let $D$ be a $\mathbb{Q}$-divisor. Let $f:Y\rightarrow B$ be a birational family of tigers of weight $w$ relative to $D$.
We say that $f$ admits a good refinement $f'$ of weight $u$ and dimension $d$ if there is a $\pi$-exceptional divisor $E$, and a commutative diagram
\[
\begin{tikzcd}
Y' \arrow{r}{\tau} \arrow{d}{f'} & Y \arrow{d}{f} \\
C  \arrow{r}{g} & B
\end{tikzcd}
\]
such that:
\begin{enumerate}
\item For general $b\in B$, $f'_b: Y_b ' \rightarrow C_b$ is a birational family of tigers of weight $u$ relative to $(\pi^* D + E)_{|_{Y_b}}$ that has the separation property.
\item For general $b\in B$, $\tau_b :Y_b '\rightarrow Y_b$ is the natural birational morphism.
\item The general fiber of $f '$ has dimension $d$.
\item $f'$ is flat.
\end{enumerate}
\end{definition}

\begin{lemma}\label{refining1}
Let $X$ be a smooth projective variety of dimension $n$ and let $D$ be a big $\mathbb{Q}$-divisor whose graded ring of sections is finitely generated. 
Let $f:Y\rightarrow B$ be a birational family of tigers of weight $w$ relative to $D$ having the separation property.
Suppose furthermore that $f$ admits a good refinement $f'$ of weight $u$. Then $f'$ is a birational family of tigers of weight $\frac{1}{1/w+1/u}$ relative to $D$ 
that has the separation property.
\end{lemma}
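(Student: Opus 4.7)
The strategy is to combine the weight-$w$ tiger data of $f$ with the weight-$u$ refinement data of $f'$ by way of Lemma \ref{serrelift}. Write $\pi' := \pi \circ \tau$. For general $c \in C$ and $b := g(c) \in B$, set $V_b := \pi(f^{-1}(b))$ and $W_c := \pi'(f'^{-1}(c))$. The weight-$w$ tiger property of $f$ supplies $\Delta_b \sim_\mathbb{Q} \tfrac{1}{w}D$ on $X$ with $V_b$ as an lc center. By property (1) of the good refinement at $b$, the fiber $f'_b$ is a weight-$u$ tiger on $Y_b$ relative to $(\pi^*D + E)|_{Y_b}$, so I obtain $\Theta_c$ on $Y_b$ in the class $\tfrac{1}{u}(\pi^*D + E)|_{Y_b}$ with $\tau(f'^{-1}(c))$ as an lc center.

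Next, I descend $\Theta_c$ from $Y_b$ to $V_b$. Since $\pi$ is birational, for general $b$ the restriction $\pi|_{Y_b} : Y_b \to V_b$ is birational, and since $E$ is $\pi$-exceptional, $E|_{Y_b}$ is $\pi|_{Y_b}$-exceptional. Pushing $\Theta_c$ forward thus yields $\bar\Theta_c \sim_\mathbb{Q} \tfrac{1}{u} D|_{V_b}$ on $V_b$ with $W_c$ as an lc center. Feeding $\Delta = \Delta_b$, $V = V_b$, $\Theta = \bar\Theta_c$ into Lemma \ref{serrelift} produces $\Delta_c \sim_\mathbb{Q} \lambda' D$ with $\lambda' \leq \tfrac{1}{w} + \tfrac{1}{u}$ realizing $W_c$ as an lc center on $X$. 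This gives the tiger property of weight $\tfrac{wu}{w+u} = \tfrac{1}{1/w + 1/u}$ for $f'$.

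To verify the separation property for general $c_1, c_2 \in C$, let $b_i := g(c_i)$. If $b_1 \neq b_2$, apply the separation property of $f$ to get $D_{b_1,b_2} \sim_\mathbb{Q} \lambda D$ with $\lambda \leq (n+1)/w$, making $V_{b_1}$ an lc center and $D_{b_1,b_2}$ not klt at a generic point of $V_{b_2} \supseteq W_{c_2}$. Rerunning the descent-plus-Serre-lift argument with $\Delta_{b_1}$ replaced by $D_{b_1,b_2}$, and using the \say{furthermore} clause of Lemma \ref{serrelift} to preserve the non-klt condition at a chosen point of $V_{b_2}$, yields a divisor on $X$ of weight at most $(n+1)/w + 1/u \leq (n+1)(1/w + 1/u)$. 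If instead $b_1 = b_2 =: b$, use the separation property of $f'_b$ within $Y_b$ to construct a divisor of weight at most $(n+1)/u$ in the class of $(\pi^*D + E)|_{Y_b}$ separating the two refined tigers, push it down to $V_b$, and apply Lemma \ref{serrelift} with $\Delta_b$ to obtain weight at most $1/w + (n+1)/u$, again within the allowed bound.

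The main obstacle is the descent step from $Y_b$ to $V_b$: ensuring that $\tau(f'^{-1}(c))$ pushes down to a genuine lc center $W_c$ of $\bar\Theta_c$ on $V_b$ with the correct class $\tfrac{1}{u} D|_{V_b}$. This rests on birationality of $\pi|_{Y_b}$ for general $b$, on $\pi$-exceptionality of $E$ so that $E|_{Y_b}$ disappears under the pushforward, and—if the naive pushforward produces a non-klt locus larger than a single component through $W_c$—on an application of the tie-breaking Lemmas \ref{tiebreak1}--\ref{tiebreak3} to restore a unique minimal lc center without disturbing the weight bound.
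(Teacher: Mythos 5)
Your proposal is correct and follows essentially the same approach as the paper, whose entire proof is the single sentence \say{Just use Lemma \ref{serrelift}.} You have simply spelled out how that lemma is invoked fiberwise to combine the weight-$w$ tiger/separation data of $f$ on $X$ with the weight-$u$ refinement data of $f'$ on each $Y_b$ (the case split $b_1\neq b_2$ versus $b_1=b_2$, the pushforward from $Y_b$ to $V_b$ using $\pi$-exceptionality of $E$, and the tie-break cleanup), and verified that the resulting weight bounds $\tfrac{1}{w}+\tfrac{1}{u}$ and $(n+1)\bigl(\tfrac{1}{w}+\tfrac{1}{u}\bigr)$ match the claimed weight $\tfrac{1}{1/w+1/u}$.
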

\begin{proof}
Just use Lemma \ref{serrelift}.
\end{proof}

The following lemmas give useful criteria for the existence of birational families of intrinsic tigers. 

\begin{lemma}\label{nojump}
Let $X$ be a smooth projective of dimension $n$, $D$ an integral divisor on $X$ and $k>0$ a positive integer.
Suppose that that $d(D, k)=d(D, 2k)=d<n$. Then there is a birational family of intrinsic tigers of weight $k/(n-d)$ relative to $D$.
\end{lemma}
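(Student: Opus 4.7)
The plan is to construct the family $f\colon Y \to B$ from the natural incidence variety of the subvarieties $B_x^k(D)$, and to verify the three conditions of Definition \ref{intrinsictigers} taking the parameters in that definition to be $k$ itself together with $l=1$. Under these choices, the target weight $w = k/(n-d)$ translates into the multiplicity bound $wl(n-d) = k$ along the fibers.

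For the construction, let $U\subseteq X$ be a sufficiently small affine open subset on which $\dim B_x^k(D)$ is constantly equal to $d$. The incidence scheme $\mathcal{B}^k(D)\subseteq X\times U$ contains $\Delta$, and each irreducible component through $\Delta$ has dimension $n+d$. Parametrizing distinct subvarieties via the rational map $x\mapsto [B_x^k(D)]\in \operatorname{Hilb}(X)$, let $B$ be the normalization of the image, and let $Y$ be a smooth model of the pullback of the universal family. This yields a flat morphism $f\colon Y\to B$ and a birational morphism $\pi\colon Y\to X$. Conditions (1) and (2) of Definition \ref{intrinsictigers} hold by construction.

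The main step is to verify (3): for general $x$, every $H\in S_x^k(D)$ has multiplicity at least $k$ along $B_x^k(D)$. The hypothesis $d(D,k)=d(D,2k)=d$ plays a crucial role. It guarantees that $\mathcal{S}^{2k}(D)$ is nonempty and that, after shrinking $U$, the components of $B_x^k(D)$ through $x$ coincide with those of $B_x^{2k}(D)$ (both of dimension $d$). Let $\mathcal{V}$ be an irreducible component of $\mathcal{B}^k(D)$ through $\Delta$ and let $E\in \mathcal{S}^{2k}(D)$ be a general element, viewed as a divisor in $|\operatorname{pr}_1^*D|$ on $X\times U$ with $\operatorname{mult}_\Delta(E)\geq 2k$. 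Suppose for contradiction that $a=\operatorname{mult}_\mathcal{V}(E)<k$. Applying Lemma \ref{familymult} with $Z=\Delta$, we obtain $E'\in |\operatorname{pr}_1^*D|$ with $\operatorname{mult}_\Delta(E')\geq 2k-a>k$ and $\mathcal{V}\not\subseteq \operatorname{Supp}(E')$. By Lemma \ref{fibermult}, for general $x\in U$ the fiber $E'|_{X\times\{x\}}$ has multiplicity greater than $k$ at $x$, hence lies in $S_x^k(D)$, and therefore contains $B_x^k(D)$ by the defining property of the intrinsic base locus. Since $\mathcal{V}_x$ is a union of components of $B_x^k(D)$, this forces $\mathcal{V}_x\subseteq \operatorname{Supp}(E'|_{X\times\{x\}})$ for a general $x$ and therefore $\mathcal{V}\subseteq \operatorname{Supp}(E')$, a contradiction. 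We conclude $\operatorname{mult}_\mathcal{V}(E)\geq k$, and by Lemma \ref{fibermult} combined with upper semicontinuity of multiplicity, every element of $S_x^{2k}(D)$ has multiplicity at least $k$ along $B_x^k(D)$.

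The main obstacle is upgrading this bound from the proper sublinear system $S_x^{2k}(D)$ to the full $S_x^k(D)$ required by condition (3). Here the hypothesis $d(D,k)=d(D,2k)$ enters once more: it forces $B_x^k(D)=B_x^{2k}(D)$ generically on $U$, so that a repeated differentiation argument in the spirit of the proof of Lemma \ref{familymult} propagates the multiplicity estimate from $\mathcal{S}^{2k}(D)$ to every element of $\mathcal{S}^k(D)$. Once this step is executed, the three conditions of Definition \ref{intrinsictigers} are verified, yielding the desired birational family of intrinsic tigers of weight $k/(n-d)$ relative to $D$.
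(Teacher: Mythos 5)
Your construction of the family and your use of the Ein--K\"uchle--Lazarsfeld lemma (Lemma \ref{familymult}) to obtain $\operatorname{mult}_{\mathcal{V}}(E)\geqslant k$ for $E\in\mathcal{S}^{2k}(D)$ coincide with the paper's argument. The problem is in how you match the output of that argument to Definition \ref{intrinsictigers}. You take the parameter $k$ of the definition to be the $k$ of the lemma (with $l=1$), so condition (3) demands a multiplicity bound for \emph{every} $H\in S_x^{k}(D)$, and you correctly observe that ELK only yields the bound for the smaller system $S_x^{2k}(D)$. You then defer to an unspecified ``repeated differentiation argument'' to close this gap. That step is not carried out, and I do not believe it can be: a general $H\in S_x^{k}(D)$ has $\operatorname{mult}_x(H)=k$ exactly, so if $\operatorname{mult}_{Z_x}(H)=q<k$, the ELK argument only produces a divisor $H'$ with $\operatorname{mult}_\Delta(H')\geqslant k-q\geqslant 1$, which is no contradiction. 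The hypothesis $d(D,k)=d(D,2k)$ does force the top-dimensional components of $B_x^{k}$ and $B_x^{2k}$ to agree, but it does not propagate the multiplicity estimate down to $\mathcal{S}^{k}(D)$.

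The paper avoids this entirely by taking the definition's parameter to be $2k$ (still with $l=1$). Under that choice, the weight $w=k/(n-d)$ translates condition (3) into: every element of $S_x^{2k}(D)$ has multiplicity at least $wl(n-d)=k$ along $B_x^{2k}(D)$, which is exactly what the ELK step proves; and the paper's concluding paragraph, showing $B_x^{k}(D)=Z_x$ for general $x$ (and hence that $Z_x$ is the top-dimensional piece of $B_x^{2k}(D)$), supplies condition (2). So there is no separate ``upgrade'' step in the paper's route; the apparent obstacle you flag is an artifact of a parameter mismatch. Replacing your final paragraph with this change of parameter would repair the proposal.

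A smaller point: in applying Lemma \ref{familymult} you should note the hypothesis that $V=\mathcal{V}$ (equivalently $\mathcal{Z}$ in the paper's notation) dominates $X$ under $\operatorname{pr}_1$; this holds because, as $x$ ranges over $U$, the fibers $Z_x$ always pass through $x$ and hence sweep out a dense subset of $X$, but it is worth stating since it is exactly the condition that makes differentiation in the $T$-direction lower multiplicity along $\mathcal{Z}$.
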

\begin{proof}
By hypothesis there is an irreducible component $\mathcal{Z}$ of $\mathcal{B}^{2k}(D)$ of maximal dimension $d+n$ 
such that $\mathcal{Z}\subseteq \mathcal{B}^k(D)$, hence $\mathcal{Z}$ is a component of $\mathcal{B}^k(D)$.
Pick any $H\in \mathcal{S}^{2k}(D)$ and let $q$ be the order of $H$ along $\mathcal{Z}$. By Lemma \ref{familymult}
there is $H'\in |\operatorname{pr}_1 ^* (D)|$ such that the order of $H'$ along $\Delta$ is at least $2k-q$ and $\mathcal{Z}\not\subseteq \operatorname{Supp} (H')$.
In particular $H'\notin \mathcal{S}^k(D)$, which means that $q\geqslant k$. Now, for general $x\in U$ consider $Z_x = \operatorname{pr}_1(\mathcal{Z}\cap X\times \{x\})$.
Clearly $Z_x$ contains a component of $B_x ^{2k}(D)$ of maximal dimension, and the previous argument gives us that 
$B_y ^k(D)\subseteq Z_x$ for every $y\in Z_x\cap U$. Since $\operatorname{dim}(B_y ^k (D))=d$ and $Z_x$ is irreducible, we must have
$B_y ^k(D) = Z_x$. By symmetry in $x$ and $y$, we finally get that $B_x ^k (D) = B_y ^k (D)=Z_x$. Now the existence of $Y$, $f$ and $\pi$ is clear from standard arguments
about the Hilbert scheme of $X$.
\end{proof}

\begin{lemma}\label{scaling}
Let $X$ be a smooth projective variety and $D$ a $\mathbb{Q}$-divisor on $X$. Let $l>0$ be a positive integer. A birational family of intrinsic tigers of weight $w$ relative to $lD$
is also a birational family of intrinsic tigers of weight $w/l$ relative to $D$.
\end{lemma}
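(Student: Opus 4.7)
The statement is essentially a bookkeeping check on how the auxiliary integer $l$ in Definition \ref{intrinsictigers} interacts with the weight $w$, so my plan is to unwind the definition and verify that each of the three conditions is preserved after the obvious rescaling.

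More precisely, suppose $f:Y\to B$ is a birational family of intrinsic tigers of weight $w$ relative to $lD$. By Definition \ref{intrinsictigers} this means that there exist positive integers $k$ and $m$ together with a birational morphism $\pi:Y\to X$ such that, writing $E=m(lD)=(ml)D$, we have $d(E,k)<n$; $\pi(f^{-1}(b))=B_x^k(E)$ for general $b\in B$ and general $x\in\pi(f^{-1}(b))$; and every element of $S_x^k(E)$ has multiplicity at least $wm(n-d(E,k))$ along $B_x^k(E)$ for general $x\in X$.

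My plan is to show that the very same $f$, $\pi$, and integer $k'=k$, paired with the new auxiliary integer $l'=ml$, witness that $f$ is a birational family of intrinsic tigers of weight $w/l$ relative to $D$. Conditions (1) and (2) in Definition \ref{intrinsictigers} transfer verbatim once we replace $m(lD)$ by $(ml)D=l'D$, since the construction of $S_x^k$, ${B'}_x^k$, $B_x^k$, and the invariant $d(\cdot,k)$ depends only on the $\mathbb Q$-divisor in question and not on how it is factored. For condition (3), the multiplicity bound that we already have along $B_x^{k'}(l'D)$ reads
\[
wm\bigl(n-d(l'D,k')\bigr)=\frac{w}{l}\cdot (ml)\bigl(n-d(l'D,k')\bigr)=\frac{w}{l}\,l'\bigl(n-d(l'D,k')\bigr),
\]
which is exactly the bound needed to certify weight $w/l$ relative to $D$ with auxiliary integer $l'$.

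There is no real obstacle here beyond keeping the two uses of the letter $l$ straight (the one fixed by the lemma and the internal one appearing in Definition \ref{intrinsictigers}). The entire argument is the observation that the product $wm$ in condition (3) is scale-invariant under the substitution $D\mapsto lD$, $m\mapsto m/l$ (or equivalently $l'=ml$, $w\mapsto w/l$), while conditions (1) and (2) depend only on the divisor $l'D$ itself.
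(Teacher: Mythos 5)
Your proof is correct and is exactly the intended unwinding of Definition \ref{intrinsictigers}; the paper simply states that the lemma follows immediately from the definition, and your argument (taking the witnessing integers $k'=k$ and $l'=ml$ so that the divisor $l'D=m(lD)$ and the product $w\cdot m=(w/l)\cdot l'$ are unchanged) is the bookkeeping the paper leaves implicit.
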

\begin{proof}
This follows immediately from the definition.
\end{proof}

\begin{lemma}\label{volumetigers}
Let $X$ be a smooth variety of dimension $n$ and let $D$ be a big $\mathbb{Q}$-divisor. If $\operatorname{vol}(X,D)>(2^n a)^n$ for some positive rational number $a>0$, 
then there is a birational family of intrinsic tigers of weight $a/n$ relative to $D$.
\end{lemma}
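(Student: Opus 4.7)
The plan is to apply Lemma \ref{nojump} to a suitable integer multiple $lD$ with an exponent $k$ chosen from a geometric sequence, then use Lemma \ref{scaling} to rescale back to $D$. The exponent $k$ will be located by pigeonhole on the non-decreasing sequence $d(lD, 2^i k_0)$, which forces a repetition and hence precisely the hypothesis of Lemma \ref{nojump}.

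First I would fix $\epsilon > 0$ with $\operatorname{vol}(X,D) - \epsilon > (2^n a)^n$, and pick a positive integer $l$ large and divisible enough that $lD$ is Cartier and, by Lemma \ref{volumesections},
\[
h^0(X, lD) \geq \frac{(\operatorname{vol}(X,D) - \epsilon)\, l^n}{n!}.
\]
Set $k_0 = \lceil a l \rceil$. By Lemma \ref{conditions}, imposing multiplicity at least $2^n k_0$ at a smooth point kills at most $\binom{n + 2^n k_0 - 1}{n}$ sections, a quantity asymptotic to $(2^n a)^n l^n / n!$ as $l \to \infty$. Hence for $l$ sufficiently large we have $h^0(X, lD) > \binom{n + 2^n k_0 - 1}{n}$, forcing $S_x^{2^n k_0}(lD) \neq \emptyset$ for every smooth $x$ and therefore $d(lD, 2^n k_0) < n$. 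By the nesting $B_x^j \subseteq B_x^{j+1}$ noted earlier in this section, this gives $d(lD, 2^i k_0) < n$ for every $0 \leq i \leq n$.

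The sequence $\bigl(d(lD, 2^i k_0)\bigr)_{i=0}^n$ then consists of $n+1$ non-decreasing integers in $\{0, 1, \ldots, n-1\}$, so by pigeonhole there exist $0 \leq i \leq n-1$ and $d < n$ with $d(lD, 2^i k_0) = d(lD, 2^{i+1} k_0) = d$. Applying Lemma \ref{nojump} to the integral divisor $lD$ with exponent $k = 2^i k_0$ produces a birational family of intrinsic tigers of weight $2^i k_0 / (n - d)$ relative to $lD$, and Lemma \ref{scaling} rescales this to weight $2^i k_0 / (l(n - d))$ relative to $D$. Since $2^i k_0 \geq k_0 \geq a l$ and $n - d \leq n$, this weight is bounded below by $a/n$; as condition (3) of Definition \ref{intrinsictigers} weakens when $w$ decreases, the same family is in particular a birational family of intrinsic tigers of weight $a/n$ relative to $D$.

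The only delicate step is the asymptotic balancing in the second paragraph: the condition count $\binom{n + 2^n k_0 - 1}{n}$ must grow strictly slower than $h^0(X, lD)$ as $l \to \infty$, which is exactly where the hypothesis $\operatorname{vol}(X,D) > (2^n a)^n$ enters. Once the right $l$ is selected, the pigeonhole argument and the two applications of the previous lemmas are immediate.
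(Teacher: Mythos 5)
Your proof matches the paper's argument essentially line for line: both use Lemma \ref{volumesections} to fix $l$ with enough sections, Lemma \ref{conditions} to force $d(lD,2^n a l)<n$, pigeonhole on the non-decreasing integer sequence $d(lD,2^i a l)$ to locate a repeat, then Lemma \ref{nojump} followed by Lemma \ref{scaling} to extract a family of weight at least $a/n$. The only cosmetic difference is that the paper chooses $l$ so that $al$ is an integer while you take $k_0=\lceil al\rceil$; the rest of the reasoning, including the observation that a larger weight suffices, is the same.
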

\begin{proof}
By Lemma \ref{volumesections} there is $l\gg 0$ such that $lD$ is integral, $la$ is an integer and
\[
h^0(X, lD)>\frac{(2^n (a+\epsilon)l)^n}{n!}
\]
for some $\epsilon>0$.
By Lemma \ref{conditions}, $B_x ^{2^n al} (lD)\neq X$ for every $x\in X$, which of course means that $d(lD,2^n al)<n$. By the pigeonhole principle, there is $0\leqslant j<n$ such that
$d(lD,2^j al)=d(lD,2^{j+1} al)<n$, and we may therefore apply Lemma \ref{nojump} to get a birational family of intrinsic tigers of weight $al/(n-d)$ relative to $lD$.
Conclude now by Lemma \ref{scaling}.
\end{proof}

\begin{lemma}\label{strongtigers}
Let $X$ be a smooth $n$-dimensional variety and let $D$ be a big and nef $\mathbb{Q}$-divisor.
If $\operatorname{vol}(X,D)>(2^n wn^2)^n$ for some rational number $w>0$, there exists a birational family of tigers $f$ of weight $w$ with respect to $D$ such that
\begin{enumerate}
\item $f$ has the separation property.
\item $\operatorname{vol}(Y_b, (\pi^*D)_{|_{Y_b}}) \leqslant (2^n wn^2)^n$ for general $b\in B$.
\end{enumerate}
\end{lemma}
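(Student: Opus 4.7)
The plan is to produce the required family by iterated refinement: start from the intrinsic tigers supplied by Lemma~\ref{volumetigers}, then refine fiberwise whenever the volume bound on the fibers fails, and assemble the refinements via Lemma~\ref{refining1}.

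\emph{Initial family.} Since $\operatorname{vol}(X,D)>(2^n wn^2)^n$, Lemma~\ref{volumetigers} applied with $a=wn^2$ produces a birational family of intrinsic tigers $f_0\colon Y_0\to B_0$ of weight $wn$ relative to $D$; by Lemma~\ref{firstorderseparation} it has the separation property. Let $d_0\leqslant n-1$ be its generic fiber dimension. If $\operatorname{vol}((Y_0)_b,(\pi_0^*D)|_{(Y_0)_b})\leqslant (2^n wn^2)^n$ for general $b$ we are done, since $wn\geqslant w$.

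\emph{Refinement step.} Otherwise, the general fiber $(Y_0)_b$ is positive-dimensional with volume exceeding $(2^{d_0}\cdot wn\cdot d_0)^{d_0}$ (because $d_0<n$). Applying Lemma~\ref{volumetigers} on $(Y_0)_b$ with parameter $a'=wn\cdot d_0$ yields a birational family of intrinsic tigers of weight $wn$ on $(Y_0)_b$, relative to $(\pi_0^*D)|_{(Y_0)_b}$, which has the separation property by Lemma~\ref{firstorderseparation}. Globalizing the fiberwise construction via standard Hilbert-scheme arguments (after possibly absorbing a $\pi_0$-exceptional divisor $E$ into the boundary so that $(\pi_0^*D+E)|_{(Y_0)_b}$ stays big on the general fiber) gives a good refinement $f_0'$ of $f_0$ of weight $wn$. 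Lemma~\ref{refining1} then produces a birational family of tigers of weight $\tfrac{1}{1/(wn)+1/(wn)}=wn/2$ relative to $D$, with separation property and generic fiber dimension $d_1<d_0$.

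\emph{Termination.} Iterate the dichotomy–refinement step on $f_0'$, then on its refinement, and so on. An easy induction using Lemma~\ref{refining1} shows that after $k$ refinements the weight is $wn/(k+1)$, while the generic fiber dimension strictly decreases. The process halts in at most $d_0\leqslant n-1$ steps — either the fiber volume bound becomes satisfied, or the fiber dimension drops to zero so the volume vanishes trivially. The final family has weight at least $wn/n=w$, and both conclusions hold.

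\emph{Main obstacle.} The technical heart of the argument is the globalization inside the refinement step: one must show that the fiberwise intrinsic loci $\mathcal{B}^k$ on $(Y_0)_b$ fit together into a flat family over $B_0$, which requires generic smoothness of $(Y_0)_b$, a relative log resolution of $Y_0\to B_0$, and a careful choice of the $\pi_0$-exceptional divisor $E$ so that $(\pi_0^*D+E)|_{(Y_0)_b}$ is big and the separation property transfers to the total refinement. A secondary subtlety is the finite-generation hypothesis in Lemma~\ref{refining1}: when $D$ is only big and nef one reduces to the semi-ample (hence finitely generated) case by Kodaira's lemma $D\sim_{\mathbb Q}A+E'$ with $A$ ample, then applies Serre vanishing to the ample part $A$ and absorbs $E'$ into the boundary.
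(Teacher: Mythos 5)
Your proof follows the paper's argument exactly: apply Lemma~\ref{volumetigers} with $a=wn^2$ to obtain intrinsic tigers of weight $wn$, get the separation property from Lemma~\ref{firstorderseparation}, and if the fiber volume bound fails, refine fiberwise, combine via Lemma~\ref{refining1}, and iterate; since the fiber dimension strictly decreases, the process stops after at most $n-1$ steps with weight at least $wn/n=w$. You correctly single out globalization as the crux, and the paper's way of handling it is worth recording: it forms $\mathcal{F}=\{(x,y)\in U\times U : f(x)=f(y)\}$, considers the sections of $\operatorname{pr}_1^*(\lfloor lD\rfloor)$ on $U\times U$ tangent to order at least $k$ to $\mathcal{F}$ along the diagonal, and takes $\mathcal{C}^k(lD)$ to be their base locus; Lemma~\ref{fibermult} then gives $B_x^k(lD_{|_{Y_b}})=\operatorname{pr}_1(\mathcal{C}^k(lD)\cap U\times\{x\})$ for general $x$, producing the flat family you need, while flatness of $f$ together with cohomology and base change guarantees that sections of $\pi^*D_{|_{Y_b}}$ extend over the base, so the fiberwise application of Lemma~\ref{volumetigers} is legitimate. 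Your remark on the finite-generation hypothesis in Lemma~\ref{refining1} is a genuine subtlety that the paper's proof of this lemma does not explicitly address (though in the downstream applications it always takes $D=K_X$ and appeals to~\cite{bchm}); the Kodaira-decomposition reduction you propose is a reasonable way to handle the general big-and-nef case.
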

\begin{proof}
By Lemma \ref{volumetigers}, there exists a birational family of intrinsic tigers $f:Y\rightarrow B$ of weight $wn$ relative to $D$. 
By Lemma \ref{firstorderseparation} this family has the separation property. If $\operatorname{vol}(Y_b, (\pi^*D)_{|_{Y_b}}) \leqslant (2^n wn^2)^n$, we are done. 
Suppose then that for general $b\in B$ we have that $\operatorname{vol}(Y_b, (\pi^*D)_{|_{Y_b}}) > (2^n wn^2)^n$. Let $V\subseteq B$ an open affine and let $U$ be its preimage
in $Y$. Given that $f$ is flat, the cohomology and base change theorem implies that, for general $b\in V$, all sections of $\pi^* D_{|_{Y_b}}$ come from sections of $\pi^* D_{|_U}$.
By Lemma \ref{volumetigers} each fiber $Y_b$ admits a birational family of intrinsic tigers of weight $w$ relative to $\pi^* D_{|_{Y_b}}$. After possibly shrinking $V$ and $U$ we may
assume that these families are obtained by taking base loci $B_x ^k (lD_{|_{Y_b}})$ for fixed $k$ and $l$. In order to see that the base loci $B_x ^k (lD_{|_{Y_b}})$ form a nice algebraic family
as in the definition of good refinement, we need to show that they come from a \say{global} family and repeat a construction similar to the one discussed at the beginning of this section.
To that end, consider $\mathcal{F}=\{(x,y)\in U\times U \text{ such that } f(x)=f(y)\}$. Then consider the sections of $\operatorname{pr}_1 ^* (\lfloor lD\rfloor)$ on $U\times U$ that 
are tangent to order at least $k$ to $\mathcal{F}$ along $\Delta\subseteq U\times U$. Let $\mathcal{C}^k(lD)$ be their base locus.

By Lemma \ref{fibermult} we have that
$B_x ^k (lD_{|_{Y_b}}) = \operatorname{pr}_1 (\mathcal{C}^k (lD) \cap U\times \{x\})$ for general $x\in U$. 
It finally follows then that this must give a good refinement of weight $wn$. Applying Lemma \ref{refining1} 
we get a birational family of tigers with the separation property of weight $wn/2$. Since the dimension of the fibers has decreased, this procedure terminates after at most $n-1$ steps.
Notice finally that the weight of the last family is at least $wn/n = w$.
\end{proof}

\subsection{Examples}

We provide now a general source of examples, along the lines of \cite[Proposition 1]{seshadrifoliations}.

Let $f:X\rightarrow B$ be any morphism and let $F$ be a general fiber of $f$. Let $\operatorname{dim}(X)=n$ and $\operatorname{dim}(B)=d$.
Let $M$ be a fixed ample line bundle on $X$ and let $N_s$ be ample line bundles on $B$ such that $h^0(B, N_s)\geqslant s$.
Set $L(s)=M\otimes f^*N_s$. After possibly taking suitable multiples, we may assume that $H^0(X,M)\otimes H^0(X, f^*(N_s))\rightarrow H^0(X,L(s))$ is surjective. If 
\[
s>\frac{(2^d wd)^d}{d!}
\] 
there is an integer $0\leqslant j < d$ such that $d(N_s,2^j wd)=d(N_s,2^{j+1} wd)<n$ as in the proof of Lemma \ref{volumetigers}. 
We may therefore apply Lemma \ref{nojump} to get a birational family of intrinsic tigers of weight relative $w$ relative to $N_s$.
We want to show now that for $s\gg 1$ the pullback of this family via $f$ is a birational family of intrinsic tigers relative to $L(s)$.

First notice that $\operatorname{mult}_x (H)\leqslant M^n$ for every $x\in X$ and $H\in |M|$. This implies that for $w>M^n/d$:
\[
B_x ^{2^j wd} (f^*N_s) \subseteq B_x ^{2^{j+1} wd}(L(s))\subseteq B_x ^{2^{j+1} wd} (f^*N_s)
\]

By our choice of $j$ we must then have $B_x ^{2^j wd} (f^*N_s) = B_x ^{2^{j+1} wd}(L(s))$, which shows that these loci form a birational family of tigers relative to $L(s)$.
Since $f^{-1}(b)\subseteq B_x ^{2^j wd} (f^*N_s)$, this provides non trivial examples of birational families of intrinsic tigers (i.e. such that $\operatorname{dim}B_x ^k(D) \neq 0,n$).

\section{Applications to the study of pluricanonical maps}\label{results}

In this section we use the tools developed so far to study pluricanonical maps of varieties of general type. We start with a standard definition.

\begin{definition}
Let $\mathcal{X}$ be a set of projective algebraic varieties. We say that $\mathcal{X}$ is a bounded family if there exist schemes of finite type $U$ and $T$ 
and a morphism $f:U\rightarrow T$ such that for every $X\in \mathcal{X}$ there is $t\in T$ such that $X$ is isomorphic to the geometric fiber over $t$.
\end{definition}

\begin{definition}
Let $\mathcal{X}$ be a set of pairs $(X,\varphi_X)$, where $X$ is a smooth projective variety and $\varphi_X$ is a rational map on $X$. 
We say that the maps $\varphi_X$ have birationally bounded fibers if for every $X$ the general fiber of $\varphi_X$ is birational to an element of a fixed bounded family.
\end{definition}

\begin{lemma}\label{bound}
Let $\mathcal{X}_n$ be a set of $n$-dimensional smooth projective varieties of general type. Let $r>0$ be a positive integer and assume that $h^0(X,rK_X)>0$ for every $X\in\mathcal{X}_n$.
Then the maps $\varphi_{rK_X}$ given by the complete linear series $|rK_X|$ have birationally bounded fibers if and only if there exists an integer $M$ such that 
for every $X$ we have that $\operatorname{vol}(F)\leqslant M$ for the general fiber $F$ of $\varphi_{rK_X}$.
\end{lemma}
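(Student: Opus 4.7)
The plan is to handle the two directions separately, after one preliminary observation: the general fiber $F$ of $\varphi_{rK_X}$ is itself of general type. I would establish this first by resolving the indeterminacy of $\varphi_{rK_X}$ on a smooth model $\widetilde X\to X$, taking the Stein factorization of the resulting morphism to get a fibration $\widetilde X\to Y$ with connected general fiber $\widetilde F$, and invoking the easy addition inequality $\kappa(\widetilde X)\leqslant \kappa(\widetilde F)+\dim Y$. Since $\kappa(\widetilde X)=n$ and $\dim \widetilde F=n-\dim Y$, this forces $\kappa(\widetilde F)=\dim \widetilde F$, so each connected component of a general fiber is of general type; hence so is $F$ itself as a finite disjoint union of such components.

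For the direction $(\Rightarrow)$, I would assume the $\varphi_{rK_X}$ have birationally bounded fibers, so every such $F$ is birational to a geometric fiber of some $g:\mathcal U\to T$ with $T$ of finite type. Stratifying $T$ into finitely many locally closed subsets and taking a simultaneous resolution of singularities on each, one reduces to finitely many smooth projective families. On each such family the top self-intersection of the relative canonical divisor of a geometric fiber is constructible in the base and locally constant on a dense open of each component, so it takes only finitely many values; their maximum gives the required uniform $M$, and the birational invariance of the volume for varieties of general type transports the bound back to $\operatorname{vol}(F)$.

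For $(\Leftarrow)$, the heart of the matter is the boundedness theorem of Hacon--M$^\text{c}$Kernan--Xu (building on Tsuji, Hacon--M$^\text{c}$Kernan and Takayama), asserting that for any fixed integers $d$ and $M$ the smooth projective varieties of general type of dimension $d$ with volume at most $M$ form a birationally bounded family. Combined with the preliminary observation, each $F$ (of dimension at most $n$) lies in one of finitely many such bounded families indexed by $\dim F$, and taking their disjoint union produces a single bounded family whose members contain every $F$ up to birational equivalence, which is exactly birational boundedness of the fibers.

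The main obstacle is the deep boundedness input used in $(\Leftarrow)$; the rest of the argument is comparatively routine. The easy-addition reduction is the bridge that makes the equivalence genuine: without the guarantee that $F$ is of general type, bounding only $\operatorname{vol}(F)$ would not suffice, since non-general-type fibers have volume zero yet need not be bounded in any useful sense.
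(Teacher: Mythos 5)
Your proof is correct and is, in substance, the same route the paper takes: the paper's entire proof of Lemma~\ref{bound} is the single line \say{This follows from \cite[Corollary 5.2]{haconmckernan}}, i.e.\ it invokes exactly the Hacon--M$^\text{c}$Kernan/Takayama/Tsuji boundedness result that you cite for the $(\Leftarrow)$ direction. You have merely unpacked what the author leaves implicit: that the fibers are of general type (via easy addition --- this is indeed the step that makes the bound on $\operatorname{vol}(F)$ meaningful, and your closing remark about non--general-type fibers is the right observation), and that birational boundedness conversely forces a uniform volume bound. Two small points of imprecision worth fixing. First, the attribution: the paper cites Hacon--M$^\text{c}$Kernan (2006), not Hacon--M$^\text{c}$Kernan--Xu; for smooth varieties of general type the earlier result already suffices. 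Second, in the $(\Rightarrow)$ direction you justify the uniform bound by saying \say{the top self-intersection of the relative canonical divisor \dots is constructible}; but after a simultaneous resolution the fibers need not be minimal, and for a non-minimal smooth model $K_{X_t}^n$ does not equal $\operatorname{vol}(X_t)$. The correct justification is that in a smooth projective family the plurigenera $h^0(mK_{X_t})$ are deformation invariant (Siu), hence $\operatorname{vol}(X_t)$ is locally constant and so bounded on a finite-type base; alternatively one can argue by upper semicontinuity and Noetherian induction. The conclusion you draw is right, only the intermediate identification of volume with $K^n$ should be replaced.
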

\begin{proof}
This follows from \cite[Corollary 5.2]{haconmckernan}.
\end{proof}

\begin{remark}
In the rest of the section we often define the family $\mathcal{X}$ only implicitly in order to keep the statements reasonably short and to the point. 
We hope this will not cause any confusion.
\end{remark}

\begin{lemma}\label{initial1}
Let $X$ be a smooth projective variety of general type of dimension $n$. If $\operatorname{vol}(X)>(2^n wn^2)^n$ for some rational number $w>0$, 
there is a birational family of tigers $f$ of weight $w$ with respect to $K_X$ such that
\begin{enumerate}
\item $f$ has the separation property.
\item $\operatorname{vol}(f_t ^{-1} b)\leqslant (2^n wn^2)^n$ for general $b\in B$.
\end{enumerate}
\end{lemma}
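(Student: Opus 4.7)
The claim is essentially Lemma \ref{strongtigers} applied with $D = K_X$, so the main task is to reduce to that lemma given that $K_X$ need not be nef on a general variety of general type. The natural approach is to pass to the canonical model, where ampleness provides a big and nef divisor of the same volume.

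First, I would invoke the finite generation of the canonical ring (which holds for varieties of general type by BCHM) to obtain the canonical model $Z := \operatorname{Proj} R(X, K_X)$, a normal projective variety with canonical singularities and $K_Z$ ample, satisfying $\operatorname{vol}(Z) = \operatorname{vol}(X)$. Choose a smooth common resolution $Y$ with birational morphisms $p \colon Y \to X$ and $q \colon Y \to Z$; then $D := q^* K_Z$ is big and nef on $Y$, with $\operatorname{vol}(Y, D) = \operatorname{vol}(X) > (2^n w n^2)^n$. A direct discrepancy comparison (using that $X$ is smooth and $Z$ canonical, or equivalently the negativity lemma applied to the MMP from $X$ to $Z$) yields $p^* K_X = D + N$ for some effective $\mathbb{Q}$-divisor $N \geq 0$ on $Y$.

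I would then apply Lemma \ref{strongtigers} to $(Y, D)$ to obtain a birational family of tigers $g \colon W \to B$ of weight $w$ relative to $D$, with natural birational morphism $\sigma \colon W \to Y$, enjoying the separation property and the corresponding volume bound on fibers. To convert this into a family relative to $K_X$, for each tiger divisor $D_b \sim_{\mathbb{Q}} \frac{1}{w} D$ I would form $D_b + \frac{1}{w} N \sim_{\mathbb{Q}} \frac{1}{w} p^* K_X$, which via pushforward through $p$ lies in the class $\frac{1}{w} K_X$ on $X$. Since $\operatorname{Supp}(N)$ is a proper subset of $Y$ and the tiger family is covering, for general $b$ the center $\sigma(W_b)$ is not contained in $\operatorname{Supp}(N)$ and remains non-klt in the augmented divisor; tie breaking (Lemma \ref{tiebreak1}) then restores it as the unique minimal lc center. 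The separation property transfers by an analogous argument applied to pairs of points.

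The main obstacle I anticipate is reconciling the two volume notions in part (2): Lemma \ref{strongtigers} gives a bound on $\operatorname{vol}(W_b, (\sigma^* D)|_{W_b})$, while the statement here asks for a bound on $\operatorname{vol}(W_b) = \operatorname{vol}(K_{W_b})$. These are related by subadjunction on the lc center $\sigma(W_b)$, which gives $\operatorname{vol}(K_{W_b}) \leq (1 + 1/w)^{\dim W_b} \operatorname{vol}((p^* K_X)|_{\sigma(W_b)})$; combined with $p^* K_X = D + N$, one must absorb the multiplicative factors and the contribution of $(\sigma^* N)|_{W_b}$ into the target bound $(2^n w n^2)^n$, which may require applying Lemma \ref{strongtigers} with slightly modified parameters. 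Similar care is needed to preserve the weight exactly equal to $w$ through the tie breaking step, achievable by starting with a weight slightly larger than $w$ and absorbing the tie-breaking correction into the final weight.
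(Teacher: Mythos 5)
Your proposal takes a genuinely different, and considerably more roundabout, route than the paper. The paper's proof is a one-liner: it observes that the proof of Lemma \ref{strongtigers} can be run verbatim with $D = K_X$, because the only step in that proof that uses more than \say{$D$ big} is the invocation of Lemma \ref{refining1} (via Lemma \ref{serrelift}), which requires only that the graded ring $R(X,D)$ be finitely generated --- and for $D=K_X$ with $X$ of general type this is exactly what \cite{bchm} provides. In other words, nefness is never truly needed; finite generation is the real hypothesis, and $K_X$ satisfies it. Your approach, by contrast, manufactures a big and nef divisor by passing to the canonical model $Z$, pulling back $K_Z$, and then trying to transport the resulting family of tigers back to $X$ through the decomposition $p^*K_X = q^*K_Z + N$. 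This works in principle but introduces bookkeeping that the paper avoids entirely: you must verify that adding $\frac{1}{w}N$ does not destroy the lc center (tie-breaking handles the generic point since $N$ is exceptional, but the family and the separation argument both need re-checking after the perturbation), and you must track how the weight and the volume bound degrade under the addition of $N$. You flag the discrepancy between $\operatorname{vol}(W_b,(\sigma^*D)|_{W_b})$ (what strongtigers controls) and $\operatorname{vol}(W_b)$ (what the statement requests) and propose subadjunction to bridge it; this concern is legitimate and is, in fact, also left implicit in the paper's own one-line proof. So: your reduction is not wrong, but it solves a harder problem than necessary, and the key observation you missed is simply that one should re-examine which hypotheses the proof of Lemma \ref{strongtigers} actually uses, rather than forcing $D$ to be nef.
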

\begin{proof}
This follows from the proof of Lemma \ref{strongtigers}. In fact, here we do not need $K_X$ to be big and nef because
we may always use Lemma \ref{refining1} with $D=K_X$, thanks to \cite{bchm}.
\end{proof}

\begin{lemma}\label{initial2}
Let $X$ be a smooth projective variety of general type of dimension $n$. Let $r>0$ be a positive integer. Suppose that $h^0 (X,rK_X)>0$ and let $F$ be a general fiber.
If $\operatorname{vol}(F)>(rn)^n \cdot (2^n wn^2)^n$ for some rational number $w>0$ then there exists a birational family of tigers $f$ of weight $w$ relative to $K_X$ such that
\begin{enumerate}
\item $f_t$ has the separation property.
\item $\operatorname{vol}(f_t ^{-1} (b))\leqslant(2^n wn^2)^n$ for general $b\in B$.
\item The general fiber of $\varphi_{rK_{X_t}}$ is not contained in any fiber of $f$.
\end{enumerate}
\end{lemma}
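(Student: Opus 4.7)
The plan is to bootstrap Lemma \ref{initial1}: the hypothesis $\operatorname{vol}(F) > (rn)^n(2^n w n^2)^n$ forces $\operatorname{vol}(X) > (2^n w n^2)^n$, yielding a family $f$ satisfying (1) and (2), after which (3) follows from a second application of Lemma \ref{volXvolF}, this time to a general fiber of $f$.

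First I would pass to a log resolution of $|rK_X|$ so that its moving part is base-point free, and apply Lemma \ref{volXvolF} to $V = H^0(X, rK_X)$. Since the image of $\phi_V$ lies in $\mathbb{P}^{\dim V - 1}$, the quantity $\dim V - d$ is automatically at least $1$, and we obtain
\[
\operatorname{vol}(X) > \frac{\operatorname{vol}(F)}{(rn)^n} > (2^n w n^2)^n.
\]
Lemma \ref{initial1} then furnishes a birational family of tigers $f: Y \to B$ of weight $w$ relative to $K_X$ satisfying properties (1) and (2), with natural birational morphism $\pi: Y \to X$.

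To establish (3), I would argue by contradiction: suppose that for general $b \in B$ and general fiber $F$ of $\varphi_{rK_X}$ one has $F \subseteq \pi(f^{-1}(b))$. If $\pi(f^{-1}(b)) = F$, birational invariance of canonical volume gives $\operatorname{vol}(f^{-1}(b)) = \operatorname{vol}(F) > (rn)^n (2^n w n^2)^n \geq (2^n w n^2)^n$, contradicting (2). Otherwise $\pi(f^{-1}(b)) \supsetneq F$. Since $\pi$ is a birational morphism of smooth varieties, $K_Y - \pi^* K_X$ is effective, and $K_{f^{-1}(b)} = K_Y|_{f^{-1}(b)}$ by flatness of $f$ with smooth general fiber; pulling back $|rK_X|$ and twisting by $r(K_Y - \pi^*K_X)|_{f^{-1}(b)}$ realizes the restricted pluricanonical map of $\pi(f^{-1}(b))$ as a linear subseries $V' \subseteq H^0(f^{-1}(b), rK_{f^{-1}(b)})$ with the same fibers. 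Since $F \subsetneq \pi(f^{-1}(b))$, we have $\dim V' - d' \geq 1$, and Lemma \ref{volXvolF} applied to $f^{-1}(b)$ yields
\[
\operatorname{vol}(f^{-1}(b)) > \frac{\operatorname{vol}(F)}{(r \dim f^{-1}(b))^{\dim f^{-1}(b)}} \geq \frac{\operatorname{vol}(F)}{(rn)^n} > (2^n w n^2)^n,
\]
again contradicting (2); the middle inequality uses monotonicity of $x \mapsto x^x$ on $[1, \infty)$ together with $\dim f^{-1}(b) \leq n$.

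The delicate step is the adjunction bookkeeping in the second case: realizing the restricted linear series on $f^{-1}(b)$ as a subseries of $|rK_{f^{-1}(b)}|$ with the same pluricanonical fibration, and verifying that $\dim V' - d' \geq 1$ is inherited from the data on $X$. Once this is handled, the rest is a clean coupling of the two volume inequalities supplied by Lemma \ref{volXvolF}.
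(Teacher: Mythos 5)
Your proof is correct and takes essentially the same route as the paper, whose own proof is just three terse sentences: apply Lemma \ref{volXvolF} to deduce $\operatorname{vol}(X)>(2^nwn^2)^n$, invoke Lemma \ref{initial1}, and observe that (3) ``follows again from (2) by Lemma \ref{volXvolF}.'' Your write-up usefully spells out the second application of Lemma \ref{volXvolF} on $f^{-1}(b)$ --- the adjunction $K_{f^{-1}(b)}=K_Y|_{f^{-1}(b)}$, the twist by the effective exceptional divisor $r(K_Y-\pi^*K_X)$, the split into the cases $F=\pi(f^{-1}(b))$ versus $F\subsetneq\pi(f^{-1}(b))$, and the elementary bound $(rd)^d\leqslant(rn)^n$ --- which is exactly what the paper leaves implicit.
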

\begin{proof}
By Lemma \ref{volXvolF}, $\operatorname{vol}(X)>(2^n wn^2)^n$. 
We may then apply Lemma \ref{initial1} to get a family with properties $(1)$ and $(2)$. 
Finally, $(3)$ follows again from $(2)$ by Lemma \ref{volXvolF}.
\end{proof}

\begin{theorem}\label{theorem1}
Let $r\geqslant 2$ be an integer and let $X$ be a smooth threefold of general type. If $h^0(X,rK_X)>0$, the map $\varphi_{rK_X}$ given by the complete linear series $|rK_X|$ has 
birationally bounded fibers. Moreover, the set of smooth threefolds of general type for which $h^0(X, rK_X)=0$ is bounded.
\end{theorem}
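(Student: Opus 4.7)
By Lemma \ref{bound}, the assertion that $\varphi_{rK_X}$ has birationally bounded fibers is equivalent to a uniform upper bound $\operatorname{vol}(F)\le M(r)$ on the volume of the general fiber $F$. I would argue this by contradiction. Fix a rational number $w>n+1=4$ and set $M:=(3r)^3(72w)^3$, where $72=2^n n^2$ for $n=3$. If $\operatorname{vol}(F)>M$ then Lemma \ref{initial2} produces a birational family of tigers $f:Y\to B$ of weight $w$ relative to $K_X$, with the separation property, with $\operatorname{vol}(f^{-1}(b),(\pi^*K_X)|_{Y_b})\le(72w)^3$ for general $b$, and such that $F$ is not contained in any fiber of $f$. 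The plan is to turn each possible tiger-fiber dimension $k\in\{0,1,2\}$ into a contradiction by producing two general points $x_1,x_2\in F$ that are separated by $|rK_X|$: if $\dim F\ge 1$ this contradicts $\varphi_{rK_X}(x_1)=\varphi_{rK_X}(x_2)$, and if $\dim F=0$ it forces $\varphi_{rK_X}$ to be generically injective on fibers, so $\operatorname{vol}(F)=1<M$, again a contradiction.

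The case $k=0$ is immediate from the separation property: one gets $D_{b_1,b_2}\sim_{\mathbb Q}\lambda K_X$ with $\lambda\le 4/w<1$, $x_1$ an lc center of $D_{b_1,b_2}$, and $x_2\in\operatorname{Nklt}(D_{b_1,b_2})$, so Lemma \ref{separation} applied with $D=K_X$ and $s=r-1\ge 1=\lceil\lambda\rceil$ separates $x_1$ from $x_2$ in $|rK_X|$. For $k=2$ the tigers form a one-parameter family of surfaces; Stein-factoring $f$ gives a fibration $X\dashrightarrow B'$ over a smooth curve whose general fiber $T$ is birational to a tiger. By subadjunction applied to $T$ as an lc center of the big pair $(X,\Delta_b)$, $T$ is of general type with $\operatorname{vol}(T)\le(72w)^3$, so the class of all such $T$ is birationally bounded by Gieseker-type results for surfaces of general type. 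Theorem \ref{liftfibration} then makes the restriction $H^0(X,rK_X)\to H^0(T_1,rK_{T_1})\oplus H^0(T_2,rK_{T_2})$ surjective, and lifting the pair $(0,s_2)$, where $s_2\in H^0(T_2,rK_{T_2})$ is chosen non-vanishing at a general $x_2\in F\cap T_2$ (such $s_2$ exists by Lemma \ref{surfaces} once we have secured $h^0(T_2,rK_{T_2})>0$, which follows from the surjection and the hypothesis $h^0(X,rK_X)>0$), produces a section of $rK_X$ separating $x_1\in F\cap T_1$ from $x_2\in F\cap T_2$.

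The main obstacle is the case $k=1$: the tiger through $x_i$ is a curve $C_i$ with $K_X\cdot C_i$ bounded above, and the separation property yields $D\sim_{\mathbb Q}\lambda K_X$ with $C_1$ as lc center and $C_2\subseteq\operatorname{Nklt}(D)$. One must cut $C_1$ down to $\{x_1\}$ while keeping the non-klt condition at $x_2$. The plan is to apply Lemma \ref{serrelift} to the divisor $\Theta=[x_1]$ on $C_1$, which is $\mathbb Q$-linearly equivalent to $\mu K_X|_{C_1}$ with $\mu=1/(K_X\cdot C_1)$; Lemma \ref{serrelift} then lifts this to $\Delta'\sim_{\mathbb Q}\lambda' K_X$ with $\lambda'\le\lambda+\mu$, $x_1$ as lc center, and $x_2$ still in the non-klt locus, at which point Lemma \ref{separation} concludes provided $\lambda+\mu<r-1$ (for $r=2$, strictly less than one). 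The delicate step is a uniform positive lower bound $K_X\cdot C_1\ge c(w)>0$ (equivalently an upper bound on $\mu$), which I expect to obtain from subadjunction on the normalization of $C_1$: as an lc center of the big pair $(X,D)$ with $(1+\lambda)K_X$ big and nef, $C_1$ is a curve of general type whose geometric genus is controlled by the bounded geometry of the tiger family, forcing the desired lower bound. One then picks $w$ large enough, depending on $r$, so that $\lambda+\mu<r-1$. Finally, the second statement of the theorem, that the class $\{X:h^0(X,rK_X)=0\}$ is bounded, follows because $h^0(rK_X)=0$ forces $\operatorname{vol}(X)$ to be bounded in terms of $r$ (combining BCHM with the effective birationality bounds of Hacon--McKernan and Takayama), and threefolds of general type with bounded volume form a bounded family by the same circle of results.
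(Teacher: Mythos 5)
Your proposal follows essentially the same route as the paper: reduce to a volume bound on the general fiber $F$ via Lemma \ref{bound}, extract a birational family of tigers with the separation property via Lemma \ref{initial2}, and argue by cases on the tiger dimension $k$. Case $k=0$ matches the paper exactly. Case $k=2$ is handled correctly in spirit, though more elaborately than in the paper, which simply observes that Theorem \ref{liftfibration} together with Lemma \ref{surfaces} forces the general fiber of $\varphi_{rK_X}$ to be bounded, since $\varphi_{rK_X}$ restricted to a tiger fiber dominates the $r$-th canonical map of that surface. Your concluding remarks on the second statement (bounded threefolds with $h^0(rK_X)=0$) are also fine; the paper just cites Todorov.

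The one place where your proposal has a genuine gap is case $k=1$. You correctly identify the delicate point: to apply Lemma \ref{serrelift} with $\Theta=[x_1]$ one needs $[x_1]\sim_{\mathbb Q}\mu\,K_X|_{C_1}$ with $\mu=1/(K_X\cdot C_1)$ uniformly small, hence a uniform positive lower bound on $K_X\cdot C_1$. But the justification you offer is backwards: you say the geometric genus of $C_1$ is ``controlled by the bounded geometry of the tiger family,'' i.e.\ bounded above, and that this forces the lower bound. An upper bound on $g(C_1)$ gives nothing here. What is actually needed, and what the paper uses, is the \emph{lower} bound $g(C_1)\geqslant 2$: the tiger curves form a covering family on a variety of general type, so they have genus $\geqslant 2$. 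Then subadjunction for the lc center $C_1$ of $(X,\Delta_{b_1})$ with $\Delta_{b_1}\sim_{\mathbb Q}\lambda_{b_1}K_X$, $\lambda_{b_1}\leqslant 1/w$, gives $\deg K_{C_1^\nu}\leqslant(1+\lambda_{b_1})K_X\cdot C_1$, hence $K_X\cdot C_1\geqslant(2g-2)/(1+\lambda_{b_1})\geqslant 2/(1+1/w)$ and $\mu\leqslant(1+1/w)/2$, which for $w$ large is just under $1/2$. The paper phrases this as: ``since $g(Y_b)\geqslant 2$, there is $\Delta\sim_{\mathbb Q}\lambda K_{C_{b_1}}$ with $\lambda<1/2+\epsilon$ having an isolated lc center at $x$.'' The bounded volume of the tiger fibers (conclusion (2) of Lemma \ref{initial2}) is not used to bound $\mu$; it is used only in case $k=2$ to put the fibers into a birationally bounded family for Theorem \ref{liftfibration}.
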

\begin{proof}
Set $w=\frac{1}{3(n+1)}$. The set of all surfaces of general type of volume at most $(2^n wn^2)^n$ forms a birationally bounded family by Lemma \ref{bound}.
Let $c$ be the constant described in the statement of Theorem \ref{liftfibration} relative to this bounded family. We claim that if $F$ is a general fiber of $\varphi_{rK_X}$ then
$\operatorname{vol}(F)\leqslant (rn)^n \cdot\operatorname{max}\{(2^n wn^2)^n, c\}$. Suppose by contradiction that this is not the case.

Apply Lemma \ref{initial2} and let $f:Y\rightarrow B$ be the corresponding birational family of tigers. Let $x$ and $y\in X$ be general points such that
$\varphi_{rK_X}(x)=\varphi_{rK_X}(y)$ but $x\in Y_{b_1}$ and $y\in Y_{b_2}$ for $b_1\neq b_2$.
We divide our analysis in cases, depending on the dimension $d$ of the general fiber $Y_b$.

\textbf{Case $d=0$.} The map $\varphi_{rK_X}$ is birational by Lemma \ref{firstorderseparation} and Lemma \ref{separation}, contradiction.

\textbf{Case $d=1$.} Since the genus $g(Y_b)\geqslant 2$, we may find a rational number $0<\epsilon\ll 1$ and a $\mathbb{Q}$-divisor 
$\Delta\sim_\mathbb{Q} \lambda K_{C_{b_1}}$ with $\lambda < 1/2+\epsilon$
such that $\Delta$ has an isolated lc center at $x$. Since $f$ has the separation property, after lifting $\Delta$ with Lemma \ref{serrelift} we deduce
that $\varphi_{rK_X}$ distinguishes $x$ and $y$ by Lemma \ref{separation}. This again contradicts the assumption that $\varphi_{rK_X}(x)=\varphi_{rK_X}(y)$.

\textbf{Case $d=2$.} In this case $\varphi_{rK_X}$ has birationally bounded fibers by Theorem \ref{liftfibration} and Lemma \ref{surfaces}. 

Since all the cases are ruled out, we have proved the claim. It follows then that $\varphi_{rK_X}$ has bounded fibers by Lemma \ref{bound}.
Finally, the second part of the statement is just \cite[Theorem 1.1]{todorov}.
\end{proof}

\begin{theorem}\label{theorem2}
Let $r\geqslant 4$ be an integer and let $X$ be a smooth fourfold of general type. If $h^0(X,rK_X)>0$ the map $\varphi_{rK_X}$ given by the complete linear series $|rK_X|$ has
birationally bounded fibers.
Moreover, any smooth fourfold of general type $X$ of large volume for which $h^0 (X, rK_X)=0$ is birationally fibered
in threefolds $F_b$ of bounded volume and such that $h^0 (F_b, rK_{F_b})=0$.
\end{theorem}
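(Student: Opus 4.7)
The plan is to mimic the proof of Theorem \ref{theorem1}, adding one extra case for three-dimensional tiger fibers. As in the threefold argument, fix $w$ so that the set of smooth threefolds of general type of volume at most $(2^nwn^2)^n$ is birationally bounded; this relies on Theorem \ref{theorem1} inductively, together with boundedness of varieties of general type of bounded volume. Let $c$ be the constant produced by Theorem \ref{liftfibration} for this family. I aim to show that every general fiber $F$ of $\varphi_{rK_X}$ satisfies $\operatorname{vol}(F)\leqslant (rn)^n\cdot\max\{(2^nwn^2)^n,c\}$, whence Lemma \ref{bound} concludes the first assertion. Suppose this bound fails; by Lemma \ref{initial2} we obtain a birational family of tigers $f:Y\to B$ of weight $w$ relative to $K_X$ enjoying properties (1)--(3) of that lemma, and I pick general $x\in\pi(f^{-1}(b_1))$ and $y\in\pi(f^{-1}(b_2))$ lying in the same fiber of $\varphi_{rK_X}$ but in distinct tiger fibers.

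Now I run the case analysis on $d=\dim Y_b$. The cases $d=0$ and $d=1$ follow exactly the threefold template: for $d=0$, the separation property combined with Lemma \ref{separation} forces $\varphi_{rK_X}$ to separate $x$ and $y$; for $d=1$, the genus $\geqslant 2$ structure of the tiger curve lets me construct a $\mathbb{Q}$-divisor of small coefficient on the curve isolating $x$ as an lc center, transport it to $X$ via Lemma \ref{serrelift}, and conclude by Lemma \ref{separation}. In the new case $d=2$, the tiger through $x$ is a surface of general type of bounded volume, so Lemma \ref{lcsurface} produces a $\mathbb{Q}$-divisor on $Y_{b_1}$ of small coefficient relative to $K_{Y_{b_1}}$ with $x$ as an isolated lc center; Serre lifting via Lemma \ref{serrelift} sends this to $X$ while the separation property preserves the non-klt obstruction at $y$, and Lemma \ref{separation} finishes the argument. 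Finally for $d=3$, the base $B$ is a curve and the tiger fibers are threefolds of general type of bounded volume: Theorem \ref{liftfibration} lifts pluricanonical sections of $Y_{b_1}$ and $Y_{b_2}$ to $X$, while Theorem \ref{theorem1} supplies birational boundedness of the fibers of $\varphi_{rK_{Y_{b_i}}}$, yielding the contradiction.

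For the moreover statement, let $X$ be a smooth fourfold of general type of large volume with $h^0(X,rK_X)=0$. Lemma \ref{initial1} produces a birational family of tigers of weight $w$ on $X$, with bounded fiber volume and the separation property. The possibilities $d\leqslant 2$ are excluded by revisiting the separation arguments above: in each of them, the lc center construction combined with Nadel vanishing on $X$ (or, when $\dim B>1$, after restricting to a general curve in $B$ so that Theorem \ref{liftfibration} applies) would produce a nonzero section of $rK_X$, against the hypothesis. Hence $d=3$, exhibiting $X$ as a birational fibration in threefolds $F_b$ of bounded volume, and $h^0(F_b,rK_{F_b})=0$ for general $b$ then follows from Theorem \ref{liftfibration}: any nonzero pluricanonical section on a general fiber would lift to a nonzero section of $rK_X$.

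The hard part is the newly-introduced case $d=2$. The tiger base is two-dimensional, so Theorem \ref{liftfibration} is not directly available to lift pluricanonical sections from the surface fibers. The key is to bypass such lifting altogether and to exploit the intrinsic geometry of surfaces of general type through Lemma \ref{lcsurface}, piping the resulting lc center into $X$ by Serre lifting while the separation property of $f$ carries the non-klt obstruction at $y$ along for free. Calibrating $w$ so that the accumulated coefficient on $K_X$ contributed by the separation property, Lemma \ref{lcsurface}, and Serre lifting stays within the tolerance of Lemma \ref{separation} is precisely what forces the threshold $r\geqslant 4$ for fourfolds.
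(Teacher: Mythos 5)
Your proposal is substantially the same as the paper's proof: the case analysis on the dimension $d$ of the tiger fibers, the use of Lemma~\ref{lcsurface} combined with Lemma~\ref{serrelift} and Lemma~\ref{separation} in the new case $d=2$, and the invocation of Theorem~\ref{liftfibration} in case $d=3$ and in the moreover part all match the paper.

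A few small points worth correcting. First, in case $d=3$ the paper's argument is more economical than yours: once Theorem~\ref{liftfibration} yields the surjection
\[
H^0(X,rK_X)\rightarrow H^0(Y_{b_1}, rK_{Y_{b_1}})\oplus H^0(Y_{b_2}, rK_{Y_{b_2}}),
\]
the hypothesis $h^0(X,rK_X)>0$ already forces $h^0(Y_{b}, rK_{Y_b})\neq 0$ for general $b$, and the surjectivity then directly produces a section of $rK_X$ vanishing on $Y_{b_2}$ but not at $x\in Y_{b_1}$; that separates $x$ and $y$ at once. You do not need Theorem~\ref{theorem1} to say anything about boundedness of the fibers of $\varphi_{rK_{Y_{b_i}}}$ --- that step is superfluous, since the contradiction is with $\varphi_{rK_X}(x)=\varphi_{rK_X}(y)$, not with a boundedness estimate on the restricted map. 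Second, the boundedness of the family of threefolds of general type of volume at most $(2^nwn^2)^n$ does not ``rely on Theorem~\ref{theorem1} inductively''; it is a consequence of Hacon--M$^{\text{c}}$Kernan/Takayama (the paper cites Lemma~\ref{bound}, i.e.\ \cite[Corollary 5.2]{haconmckernan}) applied with $r=r_3$. Finally, in the moreover part the parenthetical ``when $\dim B>1$, after restricting to a general curve in $B$ so that Theorem~\ref{liftfibration} applies'' is not needed: for $d\leqslant 2$ the paper only invokes the separation property, Lemma~\ref{lcsurface}/Lemma~\ref{lcfinite}, Lemma~\ref{serrelift}, and then the vanishing argument inside Lemma~\ref{separation} to manufacture a nonzero section of $rK_X$; Theorem~\ref{liftfibration} is reserved for $d=n-1=3$. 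None of these affects the correctness of your proposal, which remains a valid rendering of the paper's argument.
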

\begin{proof}
We fix the notation as in the proof of Theorem \ref{theorem1}. Cases $d=0$ and $d=1$ go through exactly as above. 

\textbf{Case $d=2$.} By Lemma \ref{lcsurface} we may find a rational number $0<\epsilon\ll1 $ and a $\mathbb{Q}$-divisor $\Delta\sim_\mathbb{Q} \lambda K_{Y_{b_1}}$ with 
$\lambda < 5/2+\epsilon$ such that $\Delta$ has an isolated lc center at $x$.
Since $f$ has the separation property, we get a contradiction by Lemma \ref{serrelift} and Lemma \ref{separation}.

\textbf{Case $d=3$.} 
For general $b_1$ and $b_2$ we have that the map
\[
H^0(X,rK_X)\rightarrow H^0(Y_{b_1}, rK_{Y_{b_1}})\oplus H^0(Y_{b_2}, rK_{Y_{b_2}})
\]
is surjective, by Theorem \ref{liftfibration}. Note that since $h^0(X,rK_X)>0$ by hypothesis, we have that $H^0(Z_b, rK_{Y_b})\neq 0$ for general $b$. This implies that 
$\varphi_{rK_X}$ separates $x$ and $y$, contradiction. 

We address now the second part of the statement. Let $X$ be a smooth projective fourfold of general type of large volume such that $h^0(X, rK_X)=0$. 
By Lemma \ref{initial1} there exists a birational family of tigers $Y_b$ of large weight with respect to $K_X$  that has the separation property. Let $d=\operatorname{dim}(Y_b)$. 
If $d\leqslant 2$, by the argument above, there is $\Delta\sim_\mathbb{Q} \lambda K_X$ with $\lambda < 5/2+\epsilon$ and such $\Delta$ has an isolated lc center supported at a point.
The proof of Lemma \ref{separation} then gives a contradiction, since we would get that $h^0(X,rK_X)>0$.

Assume that $d=3$. Since $\operatorname{vol}(Y_b)$ is bounded, we must have that $h^0 (Y_b, rK_{Y_b})=0$ for the general $b$ by Theorem \ref{liftfibration}. 
This is precisely the description claimed in the theorem.
\end{proof}

\begin{notation}
In order to generalize the previous results to higher dimensions we introduce the following universal quantities. For any positive integer $n>0$, let $\mathcal{X}_n$ be the set of all smooth
projective varieties of general type of dimension $n$. We define
\[
r_n = \operatorname{min}\{ r\in\mathbb{N}|\ \varphi_{mK_X} \text{ is birational for every $X\in\mathcal{X}_n$ and $m\geqslant r$}\} 
\]
and
\[
d_n = \operatorname{min}\{ r\in\mathbb{N}|\ \varphi_{mK_X} \text{ is generically finite for every $X\in\mathcal{X}_n$ and $m\geqslant r$}\} .
\]
\end{notation}

We clearly have inequalities $d_n \leqslant r_n$, $d_n \leqslant d_{n+1}$ and $r_n \leqslant r_{n+1}$. Unfortunately, these quantities seem to be rather difficult to handle. Nevertheless, we
get the following results.

\begin{theorem}\label{theorem3}
Let $n>0$ and $r\geqslant (n-2)d_{n-2}+2$ be positive integers. Let $X$ be a smooth projective $n$-dimensional variety of general type. If $h^0(X, rK_X)>0$ then 
the map $\varphi_{rK_X}$ has birationally bounded fibers. 
Moreover, any smooth $n$-dimensional variety of general type of large volume $X$ such that $h^0 (X, rK_X)=0$ 
is birationally fibered in $(n-1)$-dimensional varieties $F_b$ with bounded volume and such that  $h^0 (F_b, rK_{F_b})=0$.
\end{theorem}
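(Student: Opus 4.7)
The plan is to adapt the case analysis of Theorems \ref{theorem1} and \ref{theorem2} to arbitrary dimension. By Lemma \ref{bound}, proving birational boundedness of the fibers of $\varphi_{rK_X}$ amounts to bounding their volume uniformly. Argue by contradiction: suppose that along some sequence of varieties $X$, the general fibers of $\varphi_{rK_X}$ have unbounded volume. Fix $w\gg 0$ and apply Lemma \ref{initial2} to obtain a birational family of tigers $f\colon Y\rightarrow B$ of weight $w$ with the separation property, such that $\operatorname{vol}(Y_b, (\pi^*K_X)|_{Y_b})$ is uniformly bounded and no general fiber of $\varphi_{rK_X}$ is contained in any $\pi(Y_b)$. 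Pick general $x\in\pi(Y_{b_1})$ and $y\in\pi(Y_{b_2})$ with $b_1\neq b_2$ and $\varphi_{rK_X}(x)=\varphi_{rK_X}(y)$, and set $d=\dim Y_b$.

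The boundary cases go through as in Theorems \ref{theorem1} and \ref{theorem2}. For $d=0$ the separation property together with Lemmas \ref{firstorderseparation} and \ref{separation} immediately produces a section of $|rK_X|$ separating $x$ from $y$, a contradiction. For $d=n-1$, after Stein factorization and resolution of indeterminacies, $f$ becomes a fibration over a smooth complete curve with fibers of bounded volume; Theorem \ref{liftfibration} then makes the restriction $H^0(X,rK_X)\rightarrow H^0(Y_{b_1},rK_{Y_{b_1}})\oplus H^0(Y_{b_2},rK_{Y_{b_2}})$ surjective. The hypothesis $h^0(X,rK_X)>0$ combined with the fact that the $Y_b$ cover $X$ forces $h^0(Y_b,rK_{Y_b})\neq 0$ for general $b$, and a section that is nonzero on $Y_{b_1}$ but zero on $Y_{b_2}$ lifts to one separating $x$ and $y$.

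The substantive range is $1\leq d\leq n-2$. By Kawamata subadjunction applied to the lc center $\pi(Y_b)$ of the separation-property divisor, $K_{Y_b}$ is dominated by $(1+O(1/w))(\pi^*K_X)|_{Y_b}$ modulo an effective \say{different}, so the $Y_b$ have bounded volume with respect to $K_{Y_b}$ and vary in a bounded family; in particular $\varphi_{d_d K_{Y_b}}$ is generically finite for general $b$. Pulling back $d$ general hyperplanes through $\varphi_{d_d K_{Y_b}}(x)$ and tie-breaking as in Lemma \ref{tiebreak2}, one constructs $\Theta\sim_\mathbb{Q}(d+\epsilon)d_d K_{Y_b}$ with $\{x\}$ as its unique minimal lc center on $Y_b$. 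Absorbing the different into $\Theta$ and applying Lemma \ref{serrelift} jointly with the separation-property divisor, one lifts to an effective $\mathbb{Q}$-divisor $\Delta'\sim_\mathbb{Q}\lambda' K_X$ with $\lambda'<(d+\epsilon)d_d+O(1/w)\leq (n-2)d_{n-2}+1$ for $w$ large and $\epsilon$ small, still cutting out $\{x\}$ as an isolated lc center and remaining non-klt at $y$. Since $\lceil\lambda'\rceil\leq (n-2)d_{n-2}+1\leq r-1$, Lemma \ref{separation} furnishes the desired contradiction.

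For the second assertion, the same case analysis is run on $X$ of large volume with $h^0(X,rK_X)=0$, using Lemma \ref{initial1} in place of Lemma \ref{initial2}. In each of the cases $0\leq d\leq n-2$ one obtains an isolated log canonical center of some divisor $\sim_\mathbb{Q}\lambda' K_X$ at a point, and the construction in the proof of Lemma \ref{separation} then exhibits a nonzero section of $rK_X$, contradicting $h^0(X,rK_X)=0$. Therefore $d=n-1$, giving the claimed birational fibration in $(n-1)$-dimensional $F_b$ of bounded volume; moreover, if $h^0(F_b,rK_{F_b})\neq 0$ for general $b$ the surjectivity in Theorem \ref{liftfibration} would again produce a nonzero section of $rK_X$, another contradiction. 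The main technical obstacle is the lifting step in the middle range: one must execute subadjunction carefully enough that the coefficient $(d+\epsilon)d_d$ produced on $Y_b$ transfers into the precise bound $\lambda'\leq (n-2)d_{n-2}+1$ on $X$, which is exactly what the hypothesis $r\geq (n-2)d_{n-2}+2$ accommodates via the ceiling condition of Lemma \ref{separation}.
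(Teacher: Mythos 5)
Your proposal is correct and follows essentially the same route as the paper's proof. For $d\leqslant n-2$ the paper applies \cite[Lemma 2.8]{haconmckernan} directly on $Y_b$ to produce $\Delta\sim_\mathbb{Q}(n-2+\epsilon)d_{n-2}K_{Y_b}$ with $x$ an isolated lc center and then lifts via Lemma \ref{serrelift} and concludes by Lemma \ref{separation}; for $d=n-1$ it invokes Theorem \ref{liftfibration}; and the second assertion is handled exactly as you do, as in Theorem \ref{theorem2}. The one place where your write-up deviates — a detour through Kawamata subadjunction to argue that the $Y_b$ have bounded canonical volume and lie in a bounded family, hence that $\varphi_{d_dK_{Y_b}}$ is generically finite — is harmless but unnecessary: $d_d$ is by definition a uniform constant over \emph{all} smooth $d$-dimensional varieties of general type, and the bound $\operatorname{vol}(Y_b,(\pi^*K_X)|_{Y_b})$ is already supplied by Lemma \ref{initial2}; what one actually needs (and what both your argument and the paper's tacitly use) is only that the covering members $Y_b$ are of general type, together with the monotonicity $d_d\leqslant d_{n-2}$.
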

\begin{proof}
Fix the notation as in the proof of Theorem \ref{theorem1}.

\textbf{Case $d\leqslant n-2$.} On $Y_b$ there is a rational number $0<\epsilon\ll1$ and a divisor
$\Delta\sim_\mathbb{Q} (n-2+\epsilon)d_{n-2}K_{Y_B}$ such that $x$ is an isolated lc center of $\Delta$, by \cite[Lemma 2.8]{haconmckernan}. 
Since $f$ has the separation property, we may separate $x$ and $y$ by Lemma \ref{serrelift} and Lemma \ref{separation}, contradiction.

\textbf{Case $d=n-1$.} By our assumptions $h^0(Y_b,K_{Y_b})>0$ for the general $b\in B$. We can then separate $x$ and $y$ by using Theorem \ref{liftfibration}, contradiction.

The second part of the statement is proved just like in Theorem \ref{theorem2}.
\end{proof}

\begin{theorem}\label{theorem4}
Let $X$ be a smooth $n$-dimensional variety of general type. If $\operatorname{vol}(X)\gg 1$ then $\varphi_{rK_X}$ is birational for every 
\[
r\geqslant \operatorname{max}\{r_{n-1}, (n-1)r_{n-2} + 2\}
\]
\end{theorem}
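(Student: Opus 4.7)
The plan is to argue by contradiction, in parallel with Theorems \ref{theorem1}--\ref{theorem3}. Suppose that $\varphi_{rK_X}$ fails to be birational for some $X$ with $\operatorname{vol}(X) \gg 1$, so that one can choose two general points $x, y \in X$ with $\varphi_{rK_X}(x) = \varphi_{rK_X}(y)$ and $x \neq y$. Fix a small rational $\epsilon > 0$ and choose $w \gg 1$ with $(n+1)/w < \epsilon$. Provided $\operatorname{vol}(X) > (2^n w n^2)^n$, Lemma \ref{initial1} produces a birational family of tigers $f \colon Y \to B$ of weight $w$ relative to $K_X$ having the separation property, with $\operatorname{vol}(Y_b) \leq (2^n w n^2)^n$ for general $b$. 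Enlarging the lower bound on $\operatorname{vol}(X)$ further and invoking Lemma \ref{volXvolF} together with the $Y_b$-volume bound, one may assume that the general fiber of $\varphi_{rK_X}$ is not contained in any $Y_b$, so that $x$ and $y$ can be taken in distinct fibers $Y_{b_1}$ and $Y_{b_2}$. Let $d = \dim Y_b$, and split into cases.

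If $d \leq n-2$, then $r_{\dim Y_{b_1}} \leq r_{n-2}$, so $\varphi_{r_{n-2} K_{Y_{b_1}}}$ is birational, hence generically finite. Lemma \ref{lcfinite} applied with $D = r_{n-2} K_{Y_{b_1}}$ yields an effective $\mathbb{Q}$-divisor $\Theta \sim_\mathbb{Q} (d+1+\epsilon) r_{n-2} K_{Y_{b_1}}$ with $\{x\}$ as an isolated lc center. The separation property produces $D_{b_1,b_2} \sim_\mathbb{Q} \lambda K_X$ with $\lambda \leq (n+1)/w$, having $Y_{b_1}$ as lc center and failing to be klt at $y$. Rewriting $\Theta$ in terms of $K_X|_{Y_{b_1}}$ via adjunction along $Y_{b_1}$ (which changes the coefficient only by a factor $1 + O(1/w)$) and applying Lemma \ref{serrelift} lifts $\Theta$ to an effective $\Delta \sim_\mathbb{Q} \lambda' K_X$ on $X$ with $\{x\}$ as isolated lc center and not klt at $y$, satisfying $\lambda' < (n-1) r_{n-2} + 1$ for $w$ large and $\epsilon$ small. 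Lemma \ref{separation} then guarantees that $|rK_X|$ separates $x$ and $y$ for every $r \geq (n-1) r_{n-2} + 2$, a contradiction.

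If $d = n-1$, taking the Stein factorization of $f$ exhibits $X$ birationally as a fibration over a smooth projective curve with general fiber birational to $Y_b$, an $(n-1)$-dimensional variety of general type with bounded volume. These $Y_b$ form a birationally bounded family $\mathcal{X}$ by Lemma \ref{bound}. Requiring $\operatorname{vol}(X) > c(\mathcal{X})$, Theorem \ref{liftfibration} gives surjectivity of $H^0(X, rK_X) \to H^0(Y_{b_1}, rK_{Y_{b_1}}) \oplus H^0(Y_{b_2}, rK_{Y_{b_2}})$ for every $r \geq 2$. Since $r \geq r_{n-1}$, the map $\varphi_{rK_{Y_{b_1}}}$ is birational; choosing a section $\sigma_1 \in H^0(Y_{b_1}, rK_{Y_{b_1}})$ vanishing at $x$ and lifting the pair $(\sigma_1, 0)$ (together with an analogous choice on $Y_{b_2}$ if needed) gives a section of $rK_X$ that distinguishes $x$ and $y$, again a contradiction.

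The main obstacle I anticipate is pinning down the constant $(n-1) r_{n-2} + 2$ exactly in the $d = n - 2$ case. The coefficient $d + 1 = n - 1$ supplied by Lemma \ref{lcfinite} is tight, so the extra contributions from the separation property, from the adjunction correction relating $K_{Y_{b_1}}$ and $K_X|_{Y_{b_1}}$, and from ceiling-rounding in Lemma \ref{separation} must all be absorbed into one $\epsilon$-slack by taking $w$ sufficiently large. This bookkeeping, while routine given the proofs of Theorems \ref{theorem2} and \ref{theorem3}, is the one step where carelessness could lose a unit in the bound.
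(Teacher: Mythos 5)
Your proposal follows essentially the same route as the paper: apply Lemma \ref{initial1} to produce a birational family of tigers with the separation property and bounded fiber volume, then split on the fiber dimension $d$, using Lemma \ref{lcfinite} plus Lemma \ref{serrelift} plus Lemma \ref{separation} for $d\leqslant n-2$ and Theorem \ref{liftfibration} together with the definition of $r_{n-1}$ for $d=n-1$. In fact you are more careful than the paper on two points worth flagging. First, you correctly observe that the tiger weight $w$ must be taken \emph{large} so that the separation coefficient $(n+1)/w$ is negligible; the paper writes ``Set $w=\frac{1}{2(n+1)}$,'' which would make $(n+1)/w$ enormous and break the bookkeeping, and is almost certainly a misprint for the reciprocal. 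Second, you flag the adjunction correction relating $K_{Y_{b_1}}$ to $K_X|_{\pi(Y_{b_1})}$ needed before Lemma \ref{serrelift} applies; the paper's line ``$\lambda\leqslant(n-1+\epsilon)$'' apparently drops an $r_{n-2}$ factor, and your $1+O(1/w)$ estimate (coming from subadjunction with the tiger boundary $\sim_{\mathbb Q}\frac{1}{w}K_X$) is the honest way to absorb it.

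One small extraneous step: you enlarge $\operatorname{vol}(X)$ and invoke Lemma \ref{volXvolF} to force the general $\varphi_{rK_X}$-fiber out of every $Y_b$, so that $b_1\neq b_2$. This is not needed here, and the cited lemma does not directly deliver it (it bounds $\operatorname{vol}(F)$ above in terms of $\operatorname{vol}(X)$, not vice versa, and $F\subseteq Y_b$ with $\operatorname{vol}(Y_b)$ bounded does not control $\operatorname{vol}(F)$ when $F$ is a proper subvariety). The paper instead remarks explicitly that the $d\leqslant n-2$ case works whether or not $b_1=b_2$: if $b_1=b_2$ one applies Lemma \ref{lcfinite} directly to both $x,y\in Y_{b_1}$, and if $b_1\neq b_2$ one uses the separation property. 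Your $d=n-1$ discussion does implicitly cover $b_1=b_2$ via birationality of $\varphi_{rK_{Y_{b_1}}}$, so the argument is not actually damaged; simply drop the $b_1\neq b_2$ reduction and handle both subcases as the paper does.
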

\begin{proof}
Set $w=\frac{1}{2(n+1)}$. The set of all $(n-1)$-dimensional varieties of general type of volume at most $(2^n wn^2)^n$ forms a birationally bounded family by Lemma \ref{bound}.
Let $c$ be the constant described in the statement of Theorem \ref{liftfibration} relative to this bounded family. We claim that if $\operatorname{vol}(X)>\operatorname{max}\{(2^n wn^2)^n,c\}$
then $\varphi_{rK_X}$ is birational.

Let $f:Y\rightarrow B$ be the birational family of tigers given by Lemma \ref{initial1} and let $d$ be the dimension of the general fiber $Y_b$. We go once again by cases on $d$.

\textbf{Case $d\leqslant n-2$.} Let $x$ and $y$ be two general points on $X$. Let $b_1$, $b_2\in B$ such that $x\in Y_{b_1}$ and $y\in Y_{b_2}$. Notice that the map induced by the
complete linear series $|r_{n-2}K_{Y_{b_i}}|$ is birational for $i=1, 2$ by the definition of $r_{n-2}$. Since $f$ has the separation property,  applying Lemma \ref{lcfinite} and Lemma \ref{serrelift}
we conclude that
there is $\Delta\sim_\mathbb{Q}\lambda K_X$ such that $\Delta$ has an isolated lc center at $x$, is not klt at $y$ and $\lambda \leqslant (n-1+\epsilon)$ 
(notice that it does not matter whether $b_1 \neq b_2$ or $b_1 = b_2$).
This implies that $|((n-1)r_{n-2}+2)K_X|$ separates $x$ and $y$ by Lemma \ref{separation} and therefore the induced map is birational. 

\textbf{Case $d=n-1$.} It follows from Theorem \ref{liftfibration} that $\varphi_{rK_X}$ is birational for $r\geqslant r_{n-1}$.
\end{proof}

\bibliography{newbib}
\bibliographystyle{alpha}

\end{document}